\numberwithin{equation}{section}
\newtheorem{theorem}{Theorem}[section]
\newtheorem{definition}[theorem]{Definition}
\newtheorem{conjecture}[theorem]{Conjecture}
\newtheorem{remark}[theorem]{Remark}
\newcommand{\R}{\mathbb{R}}
\newcommand{\Z}{\mathbb{Z}}
\newcommand{\e}{\epsilon}
\newcommand{\pr}{\partial}
\newcommand{\tf}{\tilde{f}}
\DeclareMathOperator{\area}{Area}
\newtheorem{thm}[theorem]{Theorem}
\newtheorem*{thm*}{Theorem}
\newtheorem{lem}[theorem]{Lemma}
\newtheorem{prop}[theorem]{Proposition}
\newtheorem{coro}[theorem]{Corollary}
\theoremstyle{definition}
\newtheorem{rmk}[theorem]{Remark}
\begin{document}

\begin{abstract}
We construct a new example of an immortal mean curvature flow of smooth embedded connected surfaces in $\R^3$, which converges to a plane with multiplicity $2$ as time approaches infinity.

Mathematics Subject Classification: 53E10, 35K61.
    
\end{abstract}

\title{Mean curvature flow with multiplicity $2$ convergence in $\mathbb{R}^3$}

\author{Jingwen Chen, Ao Sun}
\address{Department of Mathematics, University of Pennsylvania,
David Rittenhouse Lab,
209 South 33rd Street,
Philadelphia, PA 19104}
\email{jingwch@sas.upenn.edu}
\address{Department of Mathematics, Lehigh University, Chandler-Ullmann Hall, Bethlehem, PA 18015}
\email{aos223@lehigh.edu}
\date{}
\maketitle

\section{Introduction}

Higher multiplicity convergence is a significant phenomenon in differential geometry and geometric measure theory. Roughly speaking, it means that several different sheets of a surface collapse to the same sheet in the limit. One example is the dilation of the minimal surface catenoid, which ultimately converges to a plane with multiplicity $2$. 

When higher multiplicity convergence occurs, the geometry and topology may change dramatically. In the case of the dilating catenoid converging to a plane with multiplicity $2$, the nontrivial topology and high-curvature region of the catenoids vanish in the limit plane. Therefore, higher multiplicity convergence is a central topic in the study of geometry and topology using geometric measure theory.

In this paper, we construct a new example of the mean curvature flow of smooth embedded connected surfaces in $\R^3$, which converges to a plane with multiplicity $2$ as time approaches $\infty$. A smooth one-parameter family of hypersurfaces $M(t) \subset \R^{n+1},\ t \in (0, T)$ evolves by its mean curvature if 
\[
\pr_t x=\vec H(x),
\]
where $\vec H(x)$ is the mean curvature vector of the surface at point $x$. A smooth mean curvature flow that exists for all future time is called {\bf immortal}.

\begin{thm}\label{thm:main}
There exists an immortal mean curvature flow of connected embedded rotationally symmetric surfaces $(M(t))_{t\in[0,\infty)}$ in $\R^3$, which is smooth for $t > 0$, whose limit as $t\to\infty$ is a plane with multiplicity $2$.
\end{thm}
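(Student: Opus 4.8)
The plan is to construct the flow by matched asymptotics together with a Lyapunov--Schmidt/fixed-point argument, realizing $M(t)$ as a controlled perturbation of an explicit ansatz: two almost-flat graphs over a fixed plane, joined near the rotation axis by a slowly shrinking catenoidal neck. Using rotational symmetry together with the reflection symmetry $z\mapsto-z$, the surface is described by its profile curve $\gamma(t)\subset\{(r,z):r>0\}$, which for $r$ large is a single positive graph $z=f(r,t)$ with $f(\cdot,t)\to 0$ as $r\to\infty$, and near the axis is a graph close to the upper half of a catenoid $C_{\rho(t)}$ of waist radius $\rho(t)$. The ansatz $\widetilde M(t)$ is assembled from an \emph{outer} region $r\gg\rho(t)$, where $z=\pm f(r,t)$ and $f$ solves to leading order the radial heat equation $f_t=f_{rr}+r^{-1}f_r$ (the linearization of graphical mean curvature flow) for a positive, decaying caloric profile (e.g. $f(\cdot,t)\sim c\,t^{-1}$ on compact sets); an \emph{inner} region given by the exact---hence static---catenoid $C_{\rho(t)}$, whose ends open logarithmically, $z\approx\pm\rho\log(2r/\rho)$; and a transition region, where the two are matched and glued by cutoffs. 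Crucially, $\rho(t)$ is not a free function: matching the catenoidal opening to the outer graph in the overlap forces $\rho(t)$ to obey an ODE whose solutions satisfy $\rho(t)\to 0$ while $\rho(t)>0$ for every finite $t$ --- this is the mechanism that makes the flow immortal rather than neck-pinching.

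To pass from the ansatz to a genuine flow, write $M(t)$ as the normal graph of a small function $u$ over $\widetilde M(t)$; mean curvature flow then becomes a quasilinear parabolic equation $u_t=\mathcal L_{\widetilde M}u+Q(u)+\mathcal E$, where $\mathcal L_{\widetilde M}$ is close to the heat/Jacobi operator of $\widetilde M(t)$, $Q$ collects the higher-order terms, and $\mathcal E$ is the (small, by the matching) error by which the ansatz fails to flow by its mean curvature. The analytic heart of the argument is the invertibility of $\partial_t-\mathcal L_{\widetilde M}$ in parabolically weighted Hölder spaces adapted to the two regions (accommodating the spatial decay of $f$ in the outer region and the logarithmic growth of the catenoid ends in the inner region). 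The obstruction is inherited from the catenoid: even within the symmetric class it carries one unstable Jacobi mode, the dilation mode. One inverts $\partial_t-\mathcal L_{\widetilde M}$ modulo this one-dimensional direction and uses the single remaining free parameter in the ansatz --- the initial neck size, i.e. a shooting parameter for $\rho$ --- to annihilate the projection of the equation onto the unstable mode; a contraction-mapping argument then yields a small $u$, hence an exact immortal mean curvature flow $M(t)$.

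Finally, smallness of $u$ gives that $M(t)$ is smooth, embedded and connected for every $t>0$: $f>0$ keeps the two sheets off the plane, $\rho(t)>0$ keeps the neck from pinching, and the neck keeps the surface connected. Since $\rho(t)\to 0$ and $f(\cdot,t)\to 0$ on compact sets, the surfaces $M(t)$ converge as $t\to\infty$, with multiplicity $2$, to the plane $\{z=0\}$: the two sheets collapse onto the plane while the neck shrinks to the origin. A softer alternative, avoiding the gluing machinery, would be a continuity argument in a one-parameter family of smooth ``two sheets $+$ catenoid neck'' initial surfaces: for one extreme of the parameter the neck pinches in finite time, for the other it pops open and the flow tends to a single plane (multiplicity $1$), so some boundary value of the parameter must give a flow doing neither, which one then identifies as the multiplicity-$2$ example.

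I expect the main obstacle to be the instability of the catenoid: the neck wants either to pinch or to open, so one cannot simply run the flow. The delicate points are the simultaneous slaving of $\rho(t)$ to the heat-type decay of the outer region --- so the collapsing sheets continually ``re-feed'' the neck and it never pinches in finite time --- together with the elimination of the one unstable mode by the free parameter; and ensuring that the glued error $\mathcal E$ is small and time-integrable enough to close the fixed point, with the weighted estimates uniform as $t\to\infty$ so that the flow is genuinely immortal and not merely long-time.
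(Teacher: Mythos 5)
Your main proposal --- gluing a catenoidal neck to two almost-flat caloric outer sheets, slaving the neck radius $\rho(t)$ to the outer profile by matched asymptotics, and killing the catenoid's unstable dilation mode with a shooting parameter and a contraction mapping --- is a genuinely different route from the paper's. The paper avoids gluing and modulation theory entirely: it first works in a solid cylinder $C_R$ with free boundary, sets up a one-parameter family of initial profile curves $\rho_{\delta,R}$, and shows by a soft interpolation argument (the Angenent torus forces small-$\delta$ necks to pinch, a large catenoid keeps large-$\delta$ necks open, and Proposition~\ref{singular time} gives the trichotomy that any finite-time singularity sits either on the axis or on $\partial C_R$) that some critical $\delta=n_R$ gives an immortal free boundary flow in $C_R$. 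Multiplicity-$2$ convergence of that flow is \emph{not} automatic and is established through a quantitative long-time gradient estimate (Proposition~\ref{gradient estimate}) built from an explicit supersolution for $\arctan(\partial_x f)$, combined with intersection counting against translated catenoids via Angenent's Sturmian theorem. Finally $R\to\infty$ is taken with uniform barrier estimates (Propositions~\ref{large x uniform gradient estiamte}, \ref{neck uniform gradient estimate}) to control the limit. Your route would in principle give sharper asymptotics for $\rho(t)$, at the cost of setting up weighted parabolic H\"older theory on the time-dependent glued surface and proving the error is integrable in time uniformly as $t\to\infty$; the paper trades that for barriers and zero-counting, which yields less explicit rates but a complete argument.

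Your ``softer alternative'' is closer to the paper in spirit, but as stated it has a concrete gap. The dichotomy in $\R^3$ is not ``neck pinch versus multiplicity-$1$ plane'': the alternative to pinching in such a family is convergence to a \emph{static catenoid}, not to a single plane. This is exactly the failure mode the paper flags in Remark~\ref{rmk:different family} for a less careful choice of foliation, and it means there is no clean second end of the dichotomy to interpolate against in unbounded $\R^3$. The paper resolves this by performing the interpolation in the bounded cylinder, where the set of $\delta$ with a finite-time axis pinch is open (Proposition~\ref{left open}) and the second alternative is a boundary singularity, which is excluded at the critical value $n_R$ by the barrier bound $u_{n_R}(0,t)\le 2\alpha<R$; only afterwards does it pass $R\to\infty$, using the monotonicity and boundedness of $n_R$ to keep the limit nontrivial. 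A second gap, common to both versions of your proposal, is the final step: you assert multiplicity-$2$ convergence from smallness of the perturbation or of the sheets, but that is where most of the paper's work lies --- showing the neck point goes to zero (Lemma~\ref{neck point limit}), bounding the height by $\alpha x$ with the Angenent torus (Lemma~\ref{height upper bound}), and proving the locally uniform $C^1$ decay (Propositions~\ref{gradient estimate}, \ref{limit gradient estimate}). Without these estimates an immortal flow could a priori converge to a catenoid or fail to collapse.
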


Our example contrasts sharply with the picture of the singular behavior of mean curvature flow in $\R^3$. Recall that Huisken \cite{Huisken90_asymptotic} introduced the rescaled mean curvature flow to blow up the singularities of mean curvature flow. If $(y,T)$ is a singularity of a mean curvature flow $\{M(t)\}_{t\in[0,T)}$, the rescaled mean curvature flow is defined to be $\widetilde{M}(\tau):=e^{\tau/2}(M({T-e^{-\tau}})-y)$, and it satisfies the equation 
\[
\pr_\tau x=\vec H(x)+{x^\perp}/2,
\]
where $x^\perp$ denotes the projection of the position vector to the normal direction. Using Huisken's monotonicity formula, Ilmanen \cite{Ilmanen95_Sing2D} and White \cite{White97_Stratif} showed that the (subsequential) limit of a rescaled mean curvature flow is a self-shrinker, namely a (possibly singular) hypersurface satisfying the equation $\vec H(x)+{x^\perp}/2=0$. It is worth noting that the plane is one of the self-shrinkers. The limit self-shrinker of a rescaled mean curvature flow may have higher multiplicity. In \cite{Ilmanen95_Sing2D}, Ilmanen made the following conjecture:
\begin{conjecture}[Multiplicity One Conjecture of mean curvature flow]
For a mean curvature flow of closed embedded surfaces in $\R^3$, the rescaled mean curvature flow blowing up a singularity must converge to a self-shrinker with multiplicity $1$.
\end{conjecture}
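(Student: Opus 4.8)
\emph{Proposed strategy.} The plan is to argue by contradiction. Suppose some mean curvature flow $\{M(t)\}_{t\in[0,T)}$ of closed embedded surfaces in $\R^3$ has a singularity $(y,T)$ admitting a tangent flow $m\cdot\mathcal S$ with $m\geq 2$, where $\mathcal S=\{\sqrt{-t}\,\Sigma\}_{t<0}$ is the flow of a self-shrinker $\Sigma$. By Ilmanen's regularity theorem $\Sigma$ is a smooth surface, so $\mathcal S$ is a smooth flow for $t<0$, and blowing up $m\cdot\mathcal S$ at any spacetime point $(q,s)$ with $s<0$ and $q\in\sqrt{-s}\,\Sigma$ produces a \emph{static} multiplicity-$m$ plane. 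Combined with White's stratification and dimension reduction for Brakke flows, this reduces the conjecture to showing that a mean curvature flow of closed embedded surfaces in $\R^3$ admits no static (or quasi-static) multiplicity-$m$ plane with $m\geq 2$ as a limit flow --- i.e.\ there is no sequence $(x_i,t_i)\in\supp\mathcal M$ and scales $r_i\to 0$ with $r_i^{-1}\bigl(\mathcal M-(x_i,t_i)\bigr)\to m\cdot P$ for a $2$-plane $P$. Here the number of sheets, hence $m$, is bounded a priori by the entropy of $M(0)$, which is non-increasing along the flow by Huisken's monotonicity.

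The heart of the matter --- and the step I expect to be the main obstacle --- is a \emph{sheeting theorem} in this regime: whenever the flow is sufficiently close, as Brakke flows on a parabolic ball, to $m\cdot P$, it must in fact be a union of $m$ pairwise disjoint smooth graphs $u^1<\dots<u^m$ over $P$ with small $C^2$-norm, defined over (possibly time-dependent) subdomains. The point is precisely that White's local regularity theorem yields graphicality only where the Gaussian density ratio is near $1$ and gives no information where it is near $m\geq 2$ --- which is exactly the content of the conjecture. To establish sheeting I would exploit the two-dimensionality: following the template of Perelman's canonical-neighborhood analysis for Ricci flow, adapted to mean curvature flow, one rules out hidden necks and oscillations in the high-density region using White's monotonicity of the genus for surface flows together with the classification of self-shrinkers of small entropy --- the plane, round sphere and cylinder being the only shrinkers with entropy below $\lambda(\mathbb S^1\times\R)$ --- so that the only available local models are multiplicity-one spheres and cylinders, whose densities lie strictly below $m$ once $m\geq 2$ and so cannot occur where the density is $\approx m$, or multiplicity-$\le m$ planes, which forces the graphical structure.

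Granting sheeting, I would conclude by analysing the separation $w:=u^{\,j+1}-u^{\,j}$ of two consecutive sheets. By embeddedness and the strong maximum principle $w>0$ wherever defined, and since all sheets converge to the plane one has $w\to 0$ along the tangent-flow sequence. The function $w$ solves the linearization of the graphical flow equation about the average of the two sheets; in Huisken's rescaled coordinates this is an approximate Jacobi equation $\partial_\tau w=\mathcal L_\tau w$ whose coefficients converge to those of the plane's stability operator $\mathcal L_\infty=\Delta-\tfrac12 x\cdot\nabla+\tfrac12$ on $L^2\bigl(\R^2,e^{-|x|^2/4}\bigr)$, with eigenvalues $\tfrac12-\tfrac{|\alpha|}{2}$ and Hermite eigenfunctions. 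The decisive structural fact is that the only eigenfunction of $\mathcal L_\infty$ of fixed sign is the top mode, the positive constant; hence a positive solution has strictly positive constant Fourier coefficient $a(\tau)=\langle w_\tau,1\rangle$, and a Gronwall argument controlling the error $\mathcal L_\tau-\mathcal L_\infty$ over the infinite time interval gives $a(\tau)\geq a(\tau_0)\,e^{(\tau-\tau_0)/4}\to\infty$, contradicting $w\to 0$. (If the constant mode is not dominant, the \L ojasiewicz--Simon machinery of Colding--Minicozzi for cylindrical singularities pins down the decay rate of $\|w_\tau\|$ and excludes every remaining, sign-changing, dominant mode.) Thus $w\equiv 0$, forcing $m=1$ and contradicting $m\geq 2$.

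Beyond the sheeting theorem, the remaining difficulties are: the long-time error analysis for the Jacobi equation (the \L ojasiewicz--Simon step, delicate because the perturbations $\mathcal L_\tau-\mathcal L_\infty$ accumulate over $\tau\in[\tau_0,\infty)$); controlling the weighted norms of $w$ at spatial infinity for the non-compact models $\Sigma$ (cylinder, plane) that enter the reduction, where the tangent-flow convergence degenerates; and the bookkeeping of iterated tangent flows based at moving points $(x_i,t_i)$, which I would handle by first treating the case where the tangent flow at $(y,T)$ is itself a multiplicity-$m$ plane --- so the rescaling is centered at the fixed point $(y,T)$ --- and then bootstrapping the general case.
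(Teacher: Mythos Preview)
The statement you are trying to prove is labelled a \emph{conjecture} in the paper, not a theorem; the paper gives no proof of it. It is quoted as background, and the authors explicitly remark that a proof was posted by Bamler--Kleiner while this paper was in preparation. So there is no ``paper's own proof'' to compare your proposal against.

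That said, a few comments on the proposal itself. Your final spectral step is exactly the heuristic the paper records in its introduction: for the rescaled flow the linearized operator $\Delta-\tfrac12\langle x,\nabla\cdot\rangle+\tfrac12$ on the plane has a positive first eigenvalue, so a positive separation $w$ between two sheets should grow rather than decay, contradicting multiplicity-$2$ convergence. The difficulty, which you correctly flag, is the sheeting theorem that allows one to write the flow as $m$ ordered graphs in the first place. Your proposed route to sheeting --- genus monotonicity plus the low-entropy shrinker classification --- does not close the gap: those tools constrain the \emph{type} of local model but do not by themselves rule out neck formation or topological junctions between sheets at scales where the density is near $m$, which is precisely the regime where White's local regularity gives nothing. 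This is the heart of the conjecture, and the Bamler--Kleiner argument handles it by an entirely different mechanism (a new almost-monotone quantity measuring sheet separation), not by the canonical-neighborhood/entropy reasoning you sketch. As written, your proposal identifies the right obstruction but does not supply a mechanism to overcome it.
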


This conjecture has been verified in many special cases, for example when the initial surface is mean convex or the blow-up rate of the mean curvature is at most type I. We refer the readers to \cite{HuiskenSinestrari99_CVPDE, HuiskenSinestrari99_convexity, White00_size, White03_nature, HaslhoferKleiner17_mean, LiWang19_extension-MCF, LiWang22_Multi-One-MCF}. 

While we were preparing this paper, Bamler-Kleiner \cite{BamlerKleiner23_multiplicity} posted a proof confirming the Multiplicity One Conjecture. Consequently, a smooth rescaled mean curvature flow can not converge to the plane with higher multiplicity as time approaches $\infty$, which contrasts sharply with our example. One reason for this lies in the stability of the plane as a static point in mean curvature flow, meanwhile, the plane is unstable as a static point in the rescaled mean curvature flow, as observed by Colding-Minicozzi in \cite{ColdingMinicozzi12_GenericMCF}.

To elaborate, imagine over a large region of the plane, the flow converging to it with multiplicity $2$ can be represented as double graphs. Let the difference between these two graphs be denoted by $f$. This function satisfies a nonlinear equation, with its linear part given by $\pr_t f=Lf$, where $L$ is the linearized operator of the plane. For mean curvature flow, $L=\Delta$ is the Laplacian, and for rescaled mean curvature flow, $L=\Delta-1/2\langle x,\nabla\cdot\rangle+1/2$. Because $f>0$, its evolution is dominated by the first eigenfunction of $L$, and the first eigenvalue determines its growth rate. For $L=\Delta$, the first eigenvalue is $0$, allowing the double graphs to tend to each other due to the nonlinear factor, with a sub-exponential speed. However, for $L=\Delta-1/2\langle x,\nabla\cdot\rangle+1/2$, the first eigenvalue is $1$, implying that the double graphs will move away from each other with an exponential speed. 

We would like to compare our example with another higher multiplicity example in the min-max theory, a significant tool for constructing minimal hypersurfaces in Riemannian manifolds. Recent study by Marques-Neves \cite{MarquesNeves16_Index, MarquesNeves17_Infinite, MarquesNeves18_Index} and Xin Zhou \cite{Zhou19_multi1} provided a comprehensive description of the min-max minimal hypersurfaces in Riemannian manifolds with dimensions between $3$ and $7$. These minimal hypersurfaces are either unstable with multiplicity $1$, or stable with potentially higher multiplicity. 

In \cite{WangZhou22_minmax-higher-multiplicity}, Zhichao Wang and Xin Zhou constructed a closed Riemannian manifold to show the existence of higher multiplicity min-max minimal surfaces. While the min-max theory involves a different mechanism, the reason that higher multiplicity is a feature of stable minimal surfaces is similar: when a minimal surface is unstable, the linearized operator has a positive eigenfunction with negative eigenvalue, which pushes the nearby graphs away from each other, preventing higher multiplicity convergence.

Recall that, after taking a quotient of the rotation, the Euclidean space becomes a half-plane. A rotationally symmetric surface can then be represented by a curve in the half-plane, known as the profile curve. When considering the evolution of the profile curve, our example can be interpreted as a free boundary curve shortening flow in a surface with a boundary, up to a conformal factor in the speed. In our case, the long-time behavior of our flow is ``collapsing'', which is very different from the free boundary curve shortening flow in a surface with convex boundary as studied in \cite{LangfordZhu23_distance, ko2023existence}.

We remark that there was an example by Evans and Ilmanen in \cite[Appendix E]{Ilmanen94_elliptic}, showing that a Brakke flow can have a higher density limit as $t\to\infty$ even if initially the Brakke flow is supported on a smooth embedded surface with density $1$. A Brakke flow is a geometric measure theoretic weak flow of mean curvature flow, and the density is the ratio of the measure compared to the Hausdorff measure. To our knowledge, the example in \cite[Appendix E]{Ilmanen94_elliptic} is not found in the existing literature, and Ilmanen did not explain the method to construct such an example. While our example is inspired by their description, it should be noted that from the perspective of Brakke flows, the example suggested by Evans and Ilmanen might not be smoothly connected.

\subsection{Shape of the immortal flow}

The flow constructed in Theorem \ref{thm:main} looks like two parallel planes connected by a neck. As time goes by, the size of the neck shrinks and shrinks, and as time to infinity, the size of the neck tends to $0$ (See the Figure \ref{fig:finalflow} below).

Although this may seem intuitive, showing that such a flow converges to a plane with multiplicity $2$ as time tends to infinity is not straightforward. One challenge is to prove that the neck does not pinch at a finite time, and the flow does not expand to infinity. One novelty of this paper is to turn intuition into mathematically rigorous arguments.

\begin{figure}[htbp]
    \centering
    \includegraphics[width=0.7\textwidth]{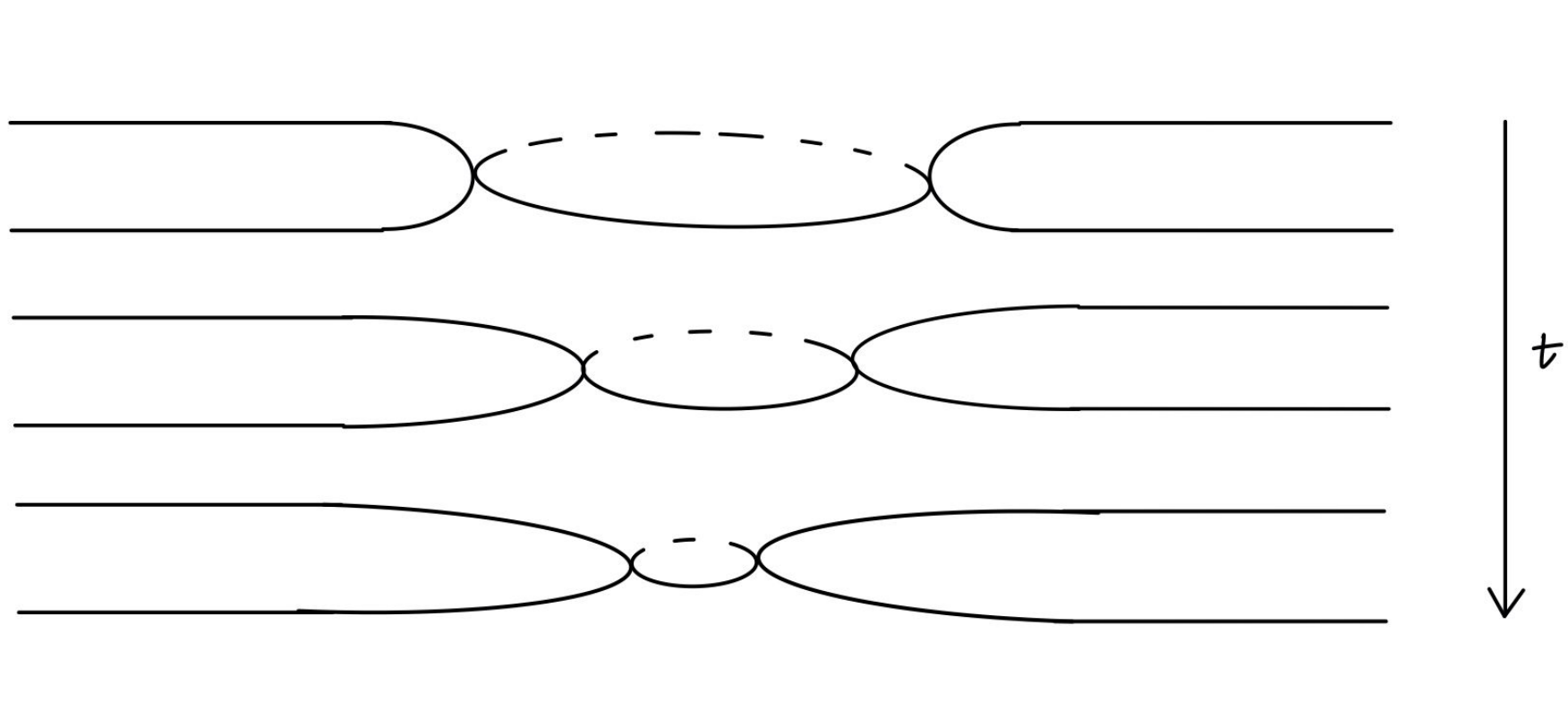}
    \caption{The shape of the immortal flow as time goes by. The lower figure shows the flow at a larger time.}
    \label{fig:finalflow}
\end{figure}

There are several tricky facts about this immortal flow. Firstly, our flow appears as two sheets, with each sheet asymptotically approaching a plane. As detailed in the Appendix, the asymptotically planar surface will be asymptotic to the same plane under mean curvature flow. Consequently, the immortal flow does not uniformly converge to the plane with multiplicity $2$, but instead converges uniformly in any compact region.

Secondly, this flow seems highly unstable. In fact, our construction reveals that even a minor perturbation can lead to a finite-time neck pinch singularity or cause the flow to escape to infinity. Therefore, we anticipate that such higher multiplicity convergence is non-generic.

Thirdly, while the immortal flow from our construction appears to be bounded between two parallel planes, we expect that a slight modification could yield an immortal flow converging to a plane with multiplicity $2$ whose ends grow, rather than being bounded. It is not clear how much the ends can grow. Our method suggests that if the ends grow too fast, the immortal flow might simply become a static catenoid (see Remark \ref{rmk:different family}).

\subsection{Idea of the construction}
We will be focused on rotationally symmetric mean curvature flow, which is a class of mean curvature flow that has been well-studied by Altschuler, Angenent, and Giga in \cite{AAG}. Our construction involves two main steps.

In the first step, we construct a smooth free boundary mean curvature flow within a solid cylinder with radius $R$ in $\R^3$. This flow converges to a free boundary disk with multiplicity $2$ as time tends to infinity. Given a domain $D$ with boundary $\pr D$, we say that a family of hypersurfaces with boundary $\{M(t)\}_{t\in[0,T)}$ evolves by its mean curvature with free boundary condition, if $\pr M_t\subset\pr D$ and wherever the flow and the $\pr D$ intersect, they meet orthogonally. From a partial differential equation (PDE) point of view, the free boundary mean curvature flow is a nonlinear PDE subject to Neumann boundary conditions. For the study of free boundary mean curvature flow, we refer the readers to \cite{stahl1996regularity, stahl1996convergence, Edelen16_convexity-free-boundary-MCF, Buckland05_free-boundary-MCF, EdelenHaslhoferIvakiZhu22_MeanConvex-free-boundary-MCF}, among others.

The construction uses an interpolation argument. We will consider a (singular) foliation of rotationally symmetric surfaces $\{\Gamma_s\}_{s\in I}$ in the cylinder. Each surface consists of two planes with holes connected by a neck. As the size of the hole varies, the behavior of the surface under mean curvature flow changes: when the hole is very small, the surface pinches at the origin; when the hole is very large, the surface contracts to the boundary. Thus, there must be a critical surface $\Gamma{s_0}$ where neither pinching nor contracting to the boundary occurs under mean curvature flow. We prove that this critical surface indeed converges to the free boundary disk with multiplicity $2$ under the mean curvature flow.

This interpolation argument has been used in the construction of mean curvature flow, and we refer the readers to \cite{White02_ICM, ChuSun23_genus} for previous applications of this technique.

In the next step, we let $R\to\infty$ to extract a subsequence of the flows constructed in the first step, obtaining a limit flow. A key aspect of the proof is ensuring that our limit is a non-static flow, rather than a static minimal surface such as a catenoid. This requires a careful selection of the (singular) foliation in the first step. Additionally, we derive quantitative estimates of the obtained flow through the use of a barrier argument.

One technique that we frequently use in our proof is the parabolic Sturmian theorem proved by Angenent \cite{angenent88_zero-set}. Such a Sturmian theorem was previously used in the study of rotationally symmetric mean curvature flow by Altschuler-Angenent-Giga \cite{AAG}. Roughly speaking, the Sturmian theorem can be used to control the number of intersection points between a mean curvature flow (that we are interested in) and a barrier. This provides a local comparison of the flow with a barrier, which is more powerful than the classical comparison principle of mean curvature flows, as it typically yields a global comparison that may not be sufficiently strong. We believe that exploring the application of the parabolic Sturmian theorem can be useful in other scenarios.

\section{Preliminary}

\subsection{Rotationally symmetric surface} \
We begin with some basic settings and notions. By taking the quotient of an $SO(n-1)$ action, any rotationally symmetric surface in $\mathbb{R}^n$ can be represented by a curve (known as the profile curve) in the half-plane, denoted as $\{(x,y) \mid x \geq 0\}$. The $y$-axis serves as the rotational axis. The area of the surface can be described by the length of the profile curve in the following metric:
\begin{equation}
    g_{\text{rot}}:=x^{2(n-2)}(dx^2+dy^2).
\end{equation}

In this paper, we only consider the case $n=3$. 

Throughout this paper, we will study the rotationally symmetric surfaces that are also reflexive symmetric. Given any curve $\gamma$ in the region $\{(x,y) | x, y \geq 0\}$ that touches the $x$-axis, let $S(\gamma)$ be the rotationally symmetric hypersurface in $\R^3$ obtained by first reflecting $\gamma$ along the $x$-axis, then rotating $\gamma$ and its reflection together about the $y$-axis. For any continuous function $f(x)$, we denote the graph of $f(x)$ in $\{(x,y)|x\geq 0\}$ as $G_f$.

Given $R > 0$, the solid closed cylinder with radius $R$ will be denoted by $C_R$. After taking the quotient of the $SO(n-1)$ action, $C_R$ can be described as the
$
C_R = \{ (x,y) | x \leq R \}$.

Given $0 < p < R$, $\e \geq 0$, let $\mathcal{F}_{R,p,\e}$ denote the set of all continuous functions $f: [p, R] \to [0, \infty)$ with $f(p) = 0$, $f'(R) = 0$, which is smooth in the following sense:
\begin{equation} \label{smooth curve}
\begin{split}
&f \text{ is smooth on } (p, R], \text{ and f has a local inverse on } [p, p + \e], \text{ such that } \\ &u: [0, f(p + \e)] \to [p, p + \e],\ u(y) = f^{-1}(y) \text{ is a smooth function}.    
\end{split}
\end{equation}

Clearly, the graph of $f$ can be expressed as the union of two graphs of smooth functions, $u$ and $v$, where $u = f^{-1}$ as mentioned above, and $v: [p + \e, R] \to [0, \infty)$, $v(x) = f(x)$. We name $u$ as the {\bf vertical graph function}, and its graph as the {\bf vertical graph}, $v$ as the {\bf horizontal graph function}, and its graph as the {\bf horizontal graph}.

We remark that our terminology is different from \cite{AAG}, because we view the graph in the direction of the rotationally symmetric plane, while they view the graph in the direction of the rotational axis. So our vertical graph is the horizontal graph in \cite{AAG}, and our horizontal graph is the vertical graph in \cite{AAG}.

Let $\mathcal{F}_{R, p} = \bigcup_{\e\geq 0} \mathcal{F}_{R, p, \e}$, and for any $f \in \mathcal{F}_{R, p}$, we name the point $(p,0)$ as the {\bf neck point} of $f$. Let $\mathcal{F}_R = \bigcup_{p\in(0,R)} \mathcal{F}_{R, p}$, and $\mathcal{F} = \bigcup_{R > 0} \mathcal{F}_R$.

For any $f \in \mathcal{F}_R$, $f'(R) = 0$ implies the curve $G_f$ and the line $x = R$ intersect orthogonally, which guarantees that $S(G_f)$ is a free boundary surface.

Let $\mathcal{G}_{R, p}$ denote a subset of $\mathcal{F}_{R, p}$, containing all functions $f$ with the following derivative conditions:
\begin{equation} \label{derivative condition of f}
f'(x) > 0, \ \forall x \in (p, R).
\end{equation}

Then by the inverse function theorem, condition \eqref{derivative condition of f} is equivalent with 
the following derivative conditions of the vertical graph function $u$ and the horizontal graph function $v$:
\begin{equation} \label{derivative condition of u and v}
u'(y)>0,\  v'(x) > 0.
\end{equation}

Finally, $\mathcal{G}_R = \bigcup_{p \in (0,R)} \mathcal{G}_{R, p}$. This is the class of functions whose graphs will be the profile curves of the mean curvature flow we will study.

\begin{figure}[htb]
    \centering
    \begin{tikzpicture}
    \draw[->] (0,0) -- (13,0) node[below] {$x$};
    \draw[->] (0,0) -- (0,4) node[left] {$y$};
    \node[below left] at (0,0) {$O$};
    \draw (12,0) -- (12,4) node[right] {$x = R$};
    \draw plot[smooth,tension=.5]
coordinates {(2.7,0) (2.7,1) (2.8, 2) (3,2.6) (4, 3) (6,3.2) (10,3.4) (12,3.4)};
    \node[below left] at (2.7,1.7) {vertical graph};
    \node[below left] at (2,1) {$x = u(y)$};
    \node[below] at (9.5,3) {horizontal graph $y = v(x)$};
    \fill(2.7,0)circle(1.5pt)node[below]{neck point $(p,0)$};
    \node[below] at (12,0) {$(R,0)$};
    \draw (2.7,0.1) coordinate (a) -- (2.7,0) coordinate (b) -- (2.71,0) coordinate (c);
    \tkzMarkRightAngle (a,b,c)
     \draw (11.9,3.4) coordinate (d) -- (12,3.4) coordinate (e) -- (12,3.3) coordinate (f);
    \tkzMarkRightAngle (d,e,f)
    \end{tikzpicture}
    \caption{Example of a curve that we study.}
\end{figure}
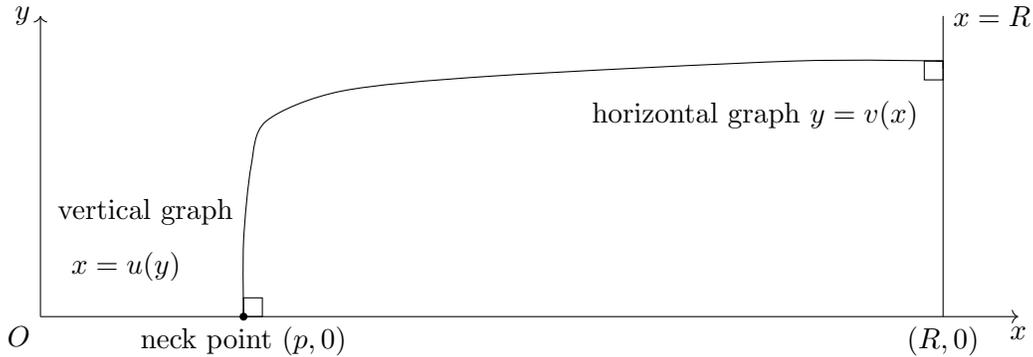

\subsection{Mean curvature flow for rotationally symmetric free boundary surface} \

Suppose $f_0 \in \mathcal{F}_R$, let $\Gamma = S(G_{f_0})$ be a rotationally symmetric free boundary surface in $C_R$. Then for $f(\cdot, t) \in \mathcal{F}_R$, $\Gamma_t = S(G_{f(\cdot, t)})$ is said to be a mean curvature flow of free boundary surfaces with initial condition $f_0$, if the corresponding vertical graph function, horizontal graph function $(u_0, v_0)$ of $f_0$, and $(u(\cdot, t), v(\cdot, t))$ of $f(\cdot, t)$ satisfy the following equation (see e.g. \cite{AAG}):

\begin{equation}
\label{mcf equation} 
\left\{
\begin{aligned}
&\frac{\partial u}{\partial t} = \frac{u_{yy}}{1+(u_y)^2} - \frac{1}{u}, \\
&\frac{\partial v}{\partial t} = \frac{v_{xx}}{1+(v_x)^2} + \frac{v_x}{x}, \\
&u(\cdot,0) = u_0,\  v(\cdot,0) = v_0, \\
&v_x(R, \cdot) = 0.
\end{aligned} 
\right.
\end{equation}

For simplicity, we will say $f(\cdot, t)$ is a solution to equation \eqref{mcf equation} if its corresponding vertical graph function and horizontal graph function solve this equation. We will call the first equation in \eqref{mcf equation} as vertical graph equation, and the second equation in \eqref{mcf equation} as horizontal graph equation.

We have the following results concerning the mean curvature flow of free boundary surfaces, which follow from Stahl \cite{stahl1996regularity}.

\begin{prop} [\cite{stahl1996regularity}] \label{long time existence}
Given $f_0 \in \mathcal{F}_R$, there exists a unique solution to equation \eqref{mcf equation} with free boundary condition on a maximal time interval $[0, T)$. This solution is smooth for $t > 0$, and in the class $C^{2+\omega, 1+ \omega/2}$ (with arbitrary $0 < \omega < 1$) for $t \geq 0$. Moreover, if $T < \infty$, then the curvature at some point in $G_{f(\cdot, t)}$ blows up as $t \to T$.
\end{prop}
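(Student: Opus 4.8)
The statement is, as the attribution indicates, essentially due to Stahl; the plan is to set up \eqref{mcf equation} as a uniformly parabolic quasilinear initial--boundary value problem and run the standard existence/smoothing/continuation machinery, using that rotational symmetry keeps everything at the level of a single scalar equation locally (or a two--component curve system globally). The one structural feature to watch is that the zeroth--order terms $-1/u$ and $v_x/x$ in \eqref{mcf equation} are nonlinear in the unknowns; these are harmless because along $G_{f_0}$ one has $x\ge p>0$, so by continuity $x\ge c>0$ along $G_{f(\cdot,t)}$ for short time and $-1/u$, $v_x/x$ are smooth bounded functions of their arguments with no singularity.

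First I would prove short--time existence and uniqueness in the parabolic H\"older class $C^{2+\omega,1+\omega/2}$. One convenient route is to write the evolving profile curve as a normal graph over the fixed initial curve $G_{f_0}$, patched near the neck point $(p,0)$ with the vertical--graph equation (where the curve is transverse to the $x$--axis and $u$ is a genuine graph function bounded below by $p$); alternatively, parametrize the whole profile curve on a fixed interval and add a tangential reparametrization term so that the flow becomes a uniformly parabolic quasilinear system for the position vector. Either way one obtains a uniformly parabolic quasilinear problem with two Neumann--type boundary conditions: orthogonality to the $x$--axis at the neck (the reflection--symmetry condition that makes $S(G_{f(\cdot,t)})$ well defined, which together with $f(p,\cdot)=0$ is a smooth constraint on $u$), and orthogonality to $\{x=R\}$, i.e. $v_x(R,\cdot)=0$. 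The standard linearization--plus--contraction argument (Solonnikov, Lunardi) then applies: the linearized operator with these oblique conditions generates an analytic semigroup on the relevant H\"older spaces, and the nonlinearity depends smoothly on its arguments on the open set $\{x>0\}$. Uniqueness also reconciles the vertical-- and horizontal--graph descriptions, since on their common domain both solve the same geometric flow and hence agree.

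Next, once a $C^{2+\omega,1+\omega/2}$ solution exists, interior parabolic Schauder estimates for the smooth--coefficient equations \eqref{mcf equation}, combined with the corresponding oblique--derivative Schauder theory up to $\{x=R\}$ and up to the $x$--axis, bootstrap $u$ and $v$ to $C^\infty$ for $t>0$, while the regularity at $t=0$ is precisely that of the data $f_0\in\mathcal F_R$. For the maximal time $T$: suppose $T<\infty$ and, for contradiction, that $\sup|A|\le C$ on $[0,T)$, where $|A|$ is the second fundamental form of $S(G_{f(\cdot,t)})$. The rotational principal curvature at the neck equals the reciprocal of the neck radius, so the curvature bound forces the neck radius (hence $u$) to stay bounded below and no pinching can occur before $T$; then the free--boundary analogues of the Ecker--Huisken and Huisken interior derivative estimates --- valid up to both ends because the Neumann data are smooth --- give uniform bounds on all $|\nabla^k A|$ on $[T/2,T)$. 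Consequently $f(\cdot,t)$ converges in $C^\infty$ as $t\to T$ to some $f(\cdot,T)\in\mathcal F_R$, and re--solving \eqref{mcf equation} from $f(\cdot,T)$ continues the flow smoothly past $T$, contradicting maximality; hence $\sup|A|\to\infty$ as $t\to T$.

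The hard part will be everything at the two ends of the profile curve rather than in the interior: matching the interior parabolic regularity with the oblique--derivative estimates at the free boundary $\{x=R\}$ and at the $x$--axis (where the ``boundary condition'' is really the requirement that the reflected and rotated curve be a genuine surface, of regularity only $C^{2+\omega}$ at $t=0$), and, in the continuation step, the up--to--the--boundary higher--order curvature estimates ruling out any finite--time breakdown other than curvature blow--up. This last point is the free--boundary counterpart of Huisken's theorem and is the technical core of \cite{stahl1996regularity, stahl1996convergence}, which I would cite rather than reprove.
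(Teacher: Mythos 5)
The paper offers no proof of this proposition at all; it is stated as a direct citation to Stahl \cite{stahl1996regularity}, and your sketch is a faithful outline of the standard route one would take to justify it (quasilinear parabolic short--time existence with Neumann/oblique conditions at both ends of the profile curve, Schauder bootstrap to $C^\infty$ for $t>0$, and a continuation criterion via higher--order curvature estimates once $|A|$ is bounded), with the technical core correctly attributed back to Stahl. Your approach is therefore consistent with the paper's, which simply defers to the cited reference, and I see no gap in the reasoning you do supply.
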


Following from Proposition \ref{long time existence}, the first singular time only depends on the initial data $f_0$, and we will denote it by $T(f_0)$.

Suppose $\gamma_1$ and $\gamma_2$ are two curves in $\{x,y\geq 0\}\cap C_R$. We say that $\gamma_1$ is \textbf{on top of} $\gamma_2$, if for all pairs $(c, y_1, y_2)$ such that $(c,y_1) \in \gamma_1, (c,y_2) \in \gamma_2$, it holds that  $y_1 \geq y_2$. If $y_1 > y_2$ for all pairs $(c,y_1,y_2)$, then we say that $\gamma_1$ is \textbf{strictly on top of} $\gamma_2$. We have the following results based on the maximum principle.

\begin{prop} [Comparison principle] \label{comparison}
Suppose $\{S(\gamma^1_t)\}_{t\in[0,T(\gamma^1_0))}$ and $\{S(\gamma^2_t)\}_{t\in[0,T(\gamma^2_0))}$ are families of surfaces in $C_R$, evolving by mean curvature. If the initial curve $\gamma^1_0$ is on top of the initial curve $\gamma^2_0$, and the intersection angle between $\gamma^1_t$ and the line $\{ x=R\}$ is less than or equal to the intersection angle between $\gamma^2_t$ and the line $\{x = R\}$ (or the curve $\gamma^2_t$ does not intersect the line $\{x = R\}$). Then $\gamma^1_t$ is on top of $\gamma^2_t$ for all time $t \in [0, T)$, where $T = \min\{T(\gamma^1_0), T(\gamma^2_0)\}$, and the function $d(t) = \text{dist}(\gamma^1_t, \gamma^2_t)$ is monotone increasing.
\end{prop}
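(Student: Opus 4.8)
The plan is to derive both conclusions from the maximum principle, treating the interior of the curves and the fixed wall $\{x=R\}$ separately: the order preservation from the strong maximum principle together with the Hopf boundary point lemma applied to the scalar equations satisfied by the graph functions, and the monotonicity of $d$ from the standard two-point computation of the evolution of the distance between two mean curvature flows in $\R^3$, adapted near $\partial C_R$ using the orthogonality and angle hypotheses and the convexity of $C_R$. The order statement is proved first, since it makes $d(t)$ positive for $t>0$ (unless the two flows coincide) and hence makes the distance computation meaningful.

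\textbf{Step 1 (order preservation).} Away from the reflection axis $\{y=0\}$ and the wall, $\gamma^1_t$ and $\gamma^2_t$ are locally, after a rotation of the plane, graphs $y=f^i(x,t)$ of functions solving a quasilinear uniformly parabolic equation; near the neck they are vertical graphs $x=u^i(y,t)$ solving the vertical graph equation of \eqref{mcf equation} (with Neumann condition at $\{y=0\}$ by reflection symmetry); near the wall they are horizontal graphs $y=v^i(x,t)$ solving the horizontal graph equation with $v^i_x(R,t)=0$. In each of these overlapping charts ``$\gamma^1_t$ on top of $\gamma^2_t$'' becomes an inequality between the two graph functions over their common domain, and their difference $w$ satisfies a \emph{linear} uniformly parabolic equation $w_t=\alpha w_{xx}+\beta w_x+c\,w$ with bounded coefficients (the zeroth-order term produced by the $-1/u$, respectively $v_x/x$, term being harmless for the maximum principle). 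Near the wall, the hypothesis that the intersection angle of $\gamma^1_t$ with $\{x=R\}$ is at most that of $\gamma^2_t$ is exactly the sign condition $w_x(R,t)=v^1_x(R,t)-v^2_x(R,t)\ge0$, which is the compatibility needed to run the maximum principle and the Hopf lemma there (when $\gamma^2_t$ also reaches the wall; otherwise there is nothing to compare there). Hence in each chart the maximum principle (in its strong form for $t>0$), together with the Hopf boundary point lemma at $\{x=R\}$ and the reflection symmetry at $\{y=0\}$, shows that either $w\equiv0$ or $w>0$ for all $t>0$; a connectedness argument along the curves (using that by rotational symmetry a contact point comes with a full contact circle) upgrades this to the dichotomy: for each $t>0$, either $\gamma^1_t=\gamma^2_t$ or $\gamma^1_t$ is strictly on top of $\gamma^2_t$. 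The case $\gamma^1_0=\gamma^2_0$ being trivial, we conclude that $\gamma^1_t$ is on top of $\gamma^2_t$ for all $t\in[0,T)$, strictly for $t>0$. (Since $\gamma^1_t,\gamma^2_t$ need not be graphs over a single axis, the comparison is genuinely chartwise; matching the charts is routine.)

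\textbf{Step 2 (monotonicity of $d$).} Because the profile curves lie in $\{x>0\}$ and the surfaces are coaxial, $d(t)=\dist(\gamma^1_t,\gamma^2_t)$ equals the Euclidean distance in $\R^3$ between $S(\gamma^1_t)$ and $S(\gamma^2_t)$, which by Step 1 are on the appropriate sides of each other. Fix $t_0$ with $d(t_0)>0$ and a pair $P_1\in S(\gamma^1_{t_0})$, $P_2\in S(\gamma^2_{t_0})$ realizing it. If both $P_i$ are interior points of the surfaces, the minimizing segment $[P_1,P_2]$ is normal to both, and the classical two-point maximum principle computation for the distance between two mean curvature flows gives $\tfrac{d^+}{dt}\big|_{t_0}d^2\ge0$. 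If some $P_i$ lies on $\partial C_R$, then since each $S(\gamma^i_{t_0})$ meets $\partial C_R$ orthogonally and $C_R$ is convex, one checks that $[P_1,P_2]$ must be parallel to the rotation axis and that \emph{both} $P_i$ lie on $\partial C_R$; translating back to the profile curves, minimality of $d(t_0)$ forces $v^1(\cdot,t_0)$ to have a local minimum and $v^2(\cdot,t_0)$ a local maximum at $x=R$, so $v^1_{xx}(R,t_0)\ge0\ge v^2_{xx}(R,t_0)$, and using the horizontal graph equation with $v^i_x(R,t_0)=0$ one gets $\tfrac{d^+}{dt}\big|_{t_0}d^2\ge 2\,d(t_0)\big(v^1_{xx}(R,t_0)-v^2_{xx}(R,t_0)\big)\ge0$. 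Thus $d^2$ has nonnegative lower forward Dini derivative on $(0,T)$; being continuous, $d^2$, and hence $d$, is nondecreasing on $(0,T)$, and by continuity at $t=0$ on $[0,T)$.

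\textbf{Where the difficulty lies.} The substantive point of both steps is the behaviour at the wall $\{x=R\}$: the Hopf-lemma step in Step 1 and the boundary case of the distance computation in Step 2 must be run at points of $\partial C_R$, where one has to use the orthogonality and the angle hypothesis (and convexity of $C_R$) in place of the interior maximum principle, and where identifying which configurations of the minimizing segment can occur is the delicate part. The interior strong maximum principle, the interior distance computation, and the chart matching are all standard.
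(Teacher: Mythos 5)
The paper itself does not supply a proof of Proposition~\ref{comparison}: it refers to Mantegazza's book for the interior case and adds a one-sentence remark that the argument carries over to the boundary setting. Your plan---order preservation by the scalar maximum principle in charts together with the Hopf lemma at the wall, and monotonicity of $d(t)$ by the classical two-point computation adapted at $\partial C_R$---is the natural way to fill in what the paper leaves implicit, and Step~1 is essentially correct (the identification of the angle hypothesis with $w_x(R,t)\ge 0$ and the Hopf-lemma argument at $\{x=R\}$ is exactly what is needed; the chart-matching and zeroth-order terms are, as you say, routine).

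Step~2's boundary case, however, has a genuine gap. You invoke that ``each $S(\gamma^i_{t_0})$ meets $\partial C_R$ orthogonally,'' but the hypothesis of the proposition does not say this; it only compares the intersection angles, and in the paper's own applications (the catenoid barrier in Theorem~\ref{free boundary immortal flow}, and $\Gamma_{R,t}$ restricted to $C_r$ in Lemma~\ref{neck point comparison}) the upper curve $\gamma^1_t$ meets the wall at a strictly acute angle. This invalidates the claim that the minimizing segment must be parallel to the axis with both endpoints on the wall. Indeed, if $v^1_x(R)>0$, first-order optimality of the pair distance in the direction tangent to $\gamma^1$ at its wall endpoint gives $-(y_1-y_2)\,v^1_x(R)\ge 0$, which is impossible; so the closest point on $\gamma^1$ is then necessarily interior, while $P_2$ may or may not be on the wall---a configuration your case analysis does not treat. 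Separately, even in the genuinely orthogonal case $v^1_x(R)=v^2_x(R)=0$, the statement that ``$v^1$ has a local minimum and $v^2$ a local maximum at $x=R$'' is incorrect: with $v^i$ increasing and $v^i_x(R)=0$ both $v^i$ attain one-sided \emph{maxima} at $R$, so $v^1_{xx}(R)\le 0$ and $v^2_{xx}(R)\le 0$. What second-order optimality of the pair distance actually yields is the weaker $v^1_{xx}(R)\ge v^2_{xx}(R)$ (the vertical separation $v^1-v^2$ has a one-sided local minimum at $R$), which is still enough for $\tfrac{d^+}{dt}\,d^2\ge 2d\bigl(v^1_{xx}(R)-v^2_{xx}(R)\bigr)\ge 0$, but the reasoning given for the sign splitting is wrong. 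So the boundary case of Step~2 needs to be redone: eliminate the orthogonality assumption, handle the ``$P_1$ interior, $P_2$ on the wall'' configuration (using the angle hypothesis to control it), and correct the second-order reasoning at the wall.
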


We refer the readers to \cite[Section 2.2]{Mantegazza_MCFbook} for a proof. Note that in \cite{Mantegazza_MCFbook}, the comparison principle was proved for mean curvature flow without boundary, however, the proof only uses the parabolic maximum principle, hence the proof also works for mean curvature flows with boundary. The boundary angle condition implies that the shortest distance between $\gamma_t^1$ and $\gamma_t^2$ is attained either when both points are in the interior of the domain, or when both points are attained on the boundary. This can be obtained by, for example, the first variational formula of the distance function. The first interior case is exactly the same as \cite[Section 2.2]{Mantegazza_MCFbook}. The latter case only happens if the intersection angle between $\gamma^1_t$ and the line $\{ x=R\}$ equals the intersection angle between $\gamma^2_t$ and the line $\{x = R\}$, and if we write the flow near the boundary as graphs, the parabolic maximum principle also implies that the distance is increasing.

\begin{rmk}
If $\gamma^1_0$ is on top of $\gamma^2_0$ and they are not the same curve, by strictly maximum principle, $\gamma^1_t$ will be strictly on top of $\gamma^2_t$ after a short amount of time.
\end{rmk}

We state below the continuous dependence theorem from \cite[Theorem 8.1]{Amann}. While Amann proved this result for general dynamics in fractional Sobolev spaces, we only require the basic case where solutions are smooth functions defined on open subsets of $[0,R] \times (0,+\infty)$. The assumptions of the theorem are satisfied since our equation (\ref{mcf equation}) is an autonomous quasilinear parabolic equation whose right-hand side is of class $C^1$. We refer readers to Sections $7$ and $8$ of \cite{Amann} for complete details.

\begin{prop} [Continuous dependence] \label{continuous dependence}
Suppose $f_t$ is a solution to equation \eqref{mcf equation}, with initial data $f_0 \in \mathcal{F}$. Write $\mathcal{D} = \{(t, f_0) \in \R_{\geq 0} \times \mathcal{F} | 0 \leq t < T(f_0)\}$. Then $\mathcal{D}$ is open in $\R_{\geq 0} \times \mathcal{F}$ and the map
\begin{align*}
\psi: \mathcal{D} \to \mathcal{F}, \qquad \psi(t, f_0) = f_t
\end{align*}
is a Lipschitz map.
\end{prop}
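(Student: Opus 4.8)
The plan is to recast the free boundary flow \eqref{mcf equation} as an abstract quasilinear parabolic initial--boundary value problem of the type treated by Amann \cite{Amann}, and then quote his continuous dependence and openness results. First I would fix $f_0 \in \mathcal{F}$ with neck point $p_0$ and cylinder radius $R$, and replace the two--patch description by $(u,v)$ with a single evolving object: parametrize the profile curve over a fixed interval $\sigma \in [0,1]$, with $\sigma = 0$ the neck point on the $x$-axis and $\sigma = 1$ the free boundary point on $\{x = R\}$, using a fixed reference parametrization (or normal graphs over $S(G_{f_0})$) to pin down the tangential gauge. In this chart the rotationally symmetric mean curvature flow becomes a quasilinear system $\pr_t w = \mathcal{A}(w) w + \mathcal{B}(w)$ on a fixed Banach space, subject to two Neumann--type boundary conditions: orthogonality to $\{x = R\}$ at $\sigma = 1$ (this is the condition $v_x(R,\cdot) = 0$) and orthogonal intersection with the $x$-axis at $\sigma = 0$, which together with $y(0,\cdot) = 0$ encodes the reflection symmetry built into the operator $S$. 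On this chart the apparent singular terms $-1/u$ and $v_x/x$ in \eqref{mcf equation} are harmless, since $u \ge p_0 > 0$ and $x \ge p_0 > 0$ along curves near $f_0$, so all coefficients depend smoothly on $w$.

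Next I would verify the structural hypotheses of Amann's theory on the continuous interpolation scale of little H\"older spaces $h^{2+\omega}$, which matches the regularity class $C^{2+\omega, 1+\omega/2}$ of Proposition \ref{long time existence} and the natural topology on $\mathcal{F}$: (i) normal ellipticity of $\mathcal{A}(w)$, immediate because its principal part is, up to a positive factor, the Laplace--Beltrami operator of the evolving curve; (ii) the Lopatinskii--Shapiro complementing condition for the two boundary operators, standard for Neumann-- and reflection--type conditions; and (iii) local Lipschitz --- in fact real--analytic --- dependence of $\mathcal{A}(w)$, $\mathcal{B}(w)$ and the boundary operators on $w$, visible from the explicit formulas. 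With these in hand, Amann's abstract results yield a unique maximal solution for each datum, identify $\mathcal{D}$ as an open subset of $\R_{\ge 0} \times \mathcal{F}$, and show that $\psi$ depends on the data as smoothly as the nonlinearities do, from which the Lipschitz conclusion is automatic. Concretely, for the openness of $\mathcal{D}$ and the local Lipschitz bound I would run a short-time plus continuation argument: given $(t_0, f_0) \in \mathcal{D}$, cover $[0, t_0]$ by finitely many short intervals on each of which the solution operator is a $C^1$ map between the relevant Banach spaces (maximal regularity and the implicit function theorem), with bounds controlled by the $C^{2+\omega}$ size of the data, and compose. A more self-contained variant avoiding \cite{Amann} is to let $w^1, w^2$ be two solutions in the fixed chart, observe that $w^1 - w^2$ solves a linear parabolic equation whose coefficients are bounded in terms of $w^1, w^2$, and combine Schauder estimates with Gr\"onwall to obtain $\|w^1(t) - w^2(t)\| \le e^{Ct}\|w^1(0) - w^2(0)\|$ on any $[0,T]$ with $T < \min\{T(f_0^1), T(f_0^2)\}$.

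I expect the main difficulty to be organizational rather than analytic: $\mathcal{F}$ is not itself a Banach space --- it is a union over the neck position $p$, the radius $R$, and the overlap parameter $\e$, and the $(u,v)$ bookkeeping degenerates exactly at the neck --- so the real work is to produce, near each $f_0$, a single fixed Banach chart in which the flow is genuinely an abstract quasilinear parabolic equation with the moving neck absorbed into the parametrization. Once such a chart is fixed, uniform parabolicity and smooth coefficient dependence hold on a neighborhood of $f_0$ over the finite time interval $[0, t_0]$ --- the degeneracy of $g_{\mathrm{rot}}$ as $x \to 0$ is a long-time phenomenon and does not enter here --- and the statement follows from the standard theory.
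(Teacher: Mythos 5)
The paper gives no proof of this proposition at all --- it is stated as quoted directly from \cite{Amann}, with no discussion of how to fit the two-chart $(u,v)$ description into Amann's abstract framework. Your proposal therefore takes the same route the paper implicitly does (reduce to Amann's quasilinear parabolic theory on little H\"older spaces), but it supplies the missing content: the fixed-chart reparametrization to get a genuine Banach-space quasilinear evolution, verification of normal ellipticity and Lopatinskii--Shapiro for the two boundary operators, and the alternative Schauder-plus-Gr\"onwall argument for the Lipschitz bound. You also correctly identify what the paper glosses over, namely that $\mathcal{F}$ is not a Banach space --- it is a union over $p$, $R$, $\e$ with the $(u,v)$ bookkeeping degenerating at the moving neck --- and that no topology on $\mathcal{F}$ is actually fixed, so the ``Lipschitz'' claim needs a choice of metric that your fixed-chart construction would supply; making this precise is the real work, and your plan addresses it head-on rather than by citation.
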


Later we will use rotationally symmetric minimal surfaces as barriers. The minimal surfaces are surfaces with zero mean curvature. They are static solutions to the mean curvature flow. Recall that the catenoids are rotationally symmetric minimal surfaces defined by $S(\gamma_c)$, where $\gamma_c$ is the curve
\begin{equation}
    \gamma_c(s)=(c\cosh(s/c),s), \ s\in[0,\infty).
\end{equation}

It is well-known that the catenoids and planes are the only rotationally symmetric minimal surfaces in $\R^3$. Furthermore, there is no catenoid with the $y$-axis as the axis of rotation that satisfies the free boundary condition in $C_R$. The following lemma is straightforward by the equation of catenoids.

\begin{lem} \label{limit of long time flow}

The only rotationally symmetric free boundary minimal surfaces in $C_R$ are the disks given by the intersection of $C_R$ and the plane $\{y = c\}$ for some constant $c$.

\end{lem}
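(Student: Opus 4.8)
\textit{Proof proposal.} The plan is to reduce the classification to a one‑variable ODE for the profile curve and then read off the free boundary condition. Since for $n=3$ the surface $S(\gamma)$ has vanishing mean curvature exactly when $\gamma$ is a geodesic of $g_{\mathrm{rot}}=x^2(dx^2+dy^2)$ on $\{x>0\}$, equivalently a critical point of the weighted length $\int x\sqrt{1+(y')^2}\,dx$, I would first write the Euler--Lagrange equation $\frac{d}{dx}\!\left(\frac{xy'}{\sqrt{1+(y')^2}}\right)=0$ and integrate it once: the constant of integration $a$ is either $0$, giving a horizontal segment $y\equiv c$, or nonzero, giving a catenary $x=a\cosh\!\big((y-b)/a\big)$ (this also covers the portions that are vertical graphs, since a catenary has a vertical tangent only at its waist). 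Because minimality is a local condition, the profile curve of \emph{any} connected rotationally symmetric minimal surface with boundary — not merely a complete one — is everywhere a local solution of this equation, hence a piece of a horizontal plane or of a catenoid; this is also what the preceding discussion (``catenoids and planes are the only rotationally symmetric minimal surfaces in $\R^3$'') records.

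Next I would impose the free boundary condition along $\pr C_R=\{x=R\}$, which forces the profile curve to be horizontal at every point of $\gamma\cap\{x=R\}$. For $y\equiv c$ this is automatic, and $S(G_{y\equiv c})=\{y=c\}\cap C_R$ is precisely the claimed flat disk. To exclude the catenary $x=a\cosh\!\big((y-b)/a\big)$ with $a>0$, I would differentiate to get $y'=1/\sinh\!\big((y-b)/a\big)$, which never vanishes; equivalently the catenary has a horizontal tangent nowhere and a vertical tangent exactly at its waist $\{x=a\}$. Hence at any intersection with $\{x=R\}$ the catenary meets the line non‑orthogonally, unless that intersection is the waist itself, which forces $a=R$. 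In that degenerate case $x=R\cosh\!\big((y-b)/R\big)\ge R$ with equality only at the waist, so the catenary lies in the closed exterior $\{x\ge R\}$ and meets $C_R$ only along the single circle $\{x=R,\ y=b\}$; it therefore does not bound a free boundary minimal surface inside $C_R$. Combining the two cases proves the lemma.

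I expect the only mildly delicate points to be the bookkeeping around the degenerate value $a=R$ and the remark that the ODE discussion genuinely captures all connected rotationally symmetric minimal surfaces with boundary; both are handled by the locality of the minimality condition. Everything else is a direct computation, which is why the lemma is ``straightforward by the equation of catenoids.''
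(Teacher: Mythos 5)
Your proposal is correct and follows the same route the paper intends (the paper states the lemma is ``straightforward by the equation of catenoids'' without writing out the argument): classify rotationally symmetric minimal surfaces via the geodesic/Euler--Lagrange equation for $g_{\mathrm{rot}}$, getting horizontal segments and catenaries, then observe that the catenary's tangent is never horizontal along $\{x=R\}$ (and is vertical at the waist), so only the horizontal segment meets the free boundary condition and yields the disk $\{y=c\}\cap C_R$. Your treatment of the degenerate waist case $a=R$ is a harmless extra care; one might also note that a horizontal segment must reach the axis $x=0$ since otherwise the inner circle would be a boundary component in the interior of $C_R$, violating the free boundary requirement, but this is implicit in your reading of $S(G_{y\equiv c})$.
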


%Finally, we remark that the mean curvature flow of rotationally symmetric hypersurfaces in $\R^{n+1}$ can be described by the curve shortening flow in the half-plane with the metric $g_{\text{rot}}$. Such a description allows us to use powerful tools in the study of curve shortening flow, e.g. \cite{LangfordZhu23_distance}, \cite{ko2023existence}.

\subsection{Three barriers} \

We will frequently use three known solutions of mean curvature flow as barriers. They are the planes, the catenoids, and the Angenent torus.

The planes are static mean curvature flows. If we write them as a rotational symmetric mean curvature flow, the profile curve is given by the curve $f(\cdot,t)\equiv C$ for a constant $C$.

The catenoids are also static mean curvature flows. If we write them as a rotational symmetric mean curvature flow, the horizontal graph function $u$ is given by
\[
u(y,t)=C\cosh(y/C), \ y \in [0, \infty),
\]
for a positive constant $C$. In addition, the vertical graph function of the catenoid is given by $v(x,t) = C \ln (\frac{x}{C} + \sqrt{\frac{x^2 - C^2}{C^2}})$, $x \in [C, \infty)$. It is worth noting that the catenoids satisfy the following properties. Firstly, for any fixed $C$, $v$ grows logarithmically, which implies that the catenoids can not be bounded between two planes. Secondly, as $C\to 0$, the catenoids converge to a plane with multiplicity $2$ (see Section \ref{SS:multiplicity} for the precise meaning).   

The Angenent torus was constructed by Angenent in \cite{Angenent92_Doughnut}. It is a rotationally symmetric torus in $\R^3$ that is self-shrinking under the mean curvature flow, and it shrinks to a point in finite time. Because the Angenent torus is rotationally symmetric, it is determined by the profile curve. We use $\mathcal{A}$ to denote the Angenent torus that is rotationally symmetric around the $y$-axis, with its closest point to the $y$-axis being $(11/10,0)$.

Although there is no explicit formula for the Angenent torus, there are some qualitative characterizations. The upper half of the profile curve (in the region $\{x\geq 0,\ y \geq 0 \}$) of $\mathcal{A}$ is confined within a rectangle defined by $1 < x < \alpha$, $0 \leq y < \alpha$, for some real number $\alpha > 2$. We use $T'$ to denote the time at which $\mathcal{A}$ shrinks to a single point under the mean curvature flow.

\subsection{Sturmian Theorem}

Briefly speaking, the parabolic Sturmian theorem asserts that the number of zeros of a solution to the parabolic PDE defined on $\R$ or a sub-interval of $\R$ can only decrease unless a new intersection point is produced on the boundary. We refer the readers to \cite[Theorem B, Theorem C]{angenent88_zero-set} for detailed statements, and \cite[Section 2]{angenent91_JDG-formation} for the adaption to nonlinear parabolic equations. Here we only state the applications of the parabolic Sturmian theorem to rotationally symmetric mean curvature flows.

\begin{lem}
    Suppose $f(\cdot,t)$, $g(\cdot,t)$ are two solutions to the rotationally symmetric mean curvature flow equation \eqref{mcf equation} defined on $C_R\times[0, T]$ ($T$ can be $+\infty$), $u(\cdot,t)$, $v(\cdot,t)$ are the corresponding vertical graph function, horizontal graph function of $f(\cdot,t)$, $\tilde u(\cdot,t)$ and $\tilde v(\cdot,t)$ are the corresponding vertical graph function, horizontal graph function of $g(\cdot,t)$. The number of intersection points of $f(\cdot,t)$ and $g(\cdot,t)$ is denoted by $z(t)$. Then we have the following properties:

    \begin{enumerate}
        \item $z(t)$ is a finite number for $t>0$, unless $f(\cdot,t)=g(\cdot,t)$.
        \item $z(t)$ is nonincreasing in $t$, unless $f(\cdot,t)$ and $g(\cdot,t)$ produce a new intersection point on $\partial C_R$.
        \item If $f(\cdot,t_0)$ and $g(\cdot,t_0)$ touches at a point $p$, in the sense that two curves intersect at $p$ at time $t_0$ and the unit tangent vector at $p$ coincides, then 
        \begin{itemize}
            \item either there exists $\delta>0$ such that $f(\cdot,t)$ and $g(\cdot,t)$ have at least two intersection points for $t\in(t_0-\delta,t_0)$ and at most one intersection point for $t\in(t_0,t_0+\delta)$,
            \item or there exists $\delta>0$ such that $f(\cdot,t)$ and $g(\cdot,t)$ have at most one intersection point for $t\in(t_0-\delta,t_0)$ and at least two intersection points for $t\in(t_0,t_0+\delta)$.
        \end{itemize}
    \end{enumerate}
\end{lem}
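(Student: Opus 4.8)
The plan is to reduce all three assertions to the scalar parabolic Sturmian theorem of Angenent \cite[Theorems B and C]{angenent88_zero-set}, applied not to the curves themselves but to the \emph{difference} of the two evolving profile curves written in graph coordinates.

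First I would linearize. Fix a time $t_1>0$ and a point $q$ of the curve $G_{f(\cdot,t_1)}$, and work in a graph chart about $q$: if $q$ is near the neck use the vertical graph variable $y$ and the pair $(u,\tilde u)$, and otherwise use the horizontal graph variable $x$ and the pair $(v,\tilde v)$. In either chart, subtracting the two quasilinear equations in \eqref{mcf equation} satisfied by $f$ and $g$ and treating the nonlinear terms by the mean value theorem --- for instance $-\frac1u+\frac1{\tilde u}=\frac{u-\tilde u}{u\tilde u}$, and $\frac{u_{yy}}{1+u_y^2}-\frac{\tilde u_{yy}}{1+\tilde u_y^2}$ written as the integral of its derivative along the segment from $(\tilde u_y,\tilde u_{yy})$ to $(u_y,u_{yy})$ --- shows that the difference $w$ of the two graph functions solves a linear equation $w_t=a\,w_{\xi\xi}+b\,w_\xi+c\,w$ with $\xi\in\{x,y\}$, principal part $a>0$ an average of $(1+(\cdot)_\xi^2)^{-1}$, and $b,c$ controlled by the $C^2$ norms of the two solutions and by $1/(u\tilde u)$ or $1/x$. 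By Proposition \ref{long time existence} the solutions lie in $C^{2+\omega,1+\omega/2}$ up to $t=0$ and are smooth for $t>0$; since we stay at positive distance from the rotation axis ($x\ge p>0$), and in the vertical chart $u$ is bounded below by the neck radius while $u_y$ stays bounded, the coefficients $a,b,c$ are bounded and $a$ is bounded below on each compact subinterval of $[0,T]$, so there $w$ solves a uniformly parabolic linear equation with H\"older coefficients. The geometric point is that a point of $G_{f(\cdot,t)}\cap G_{g(\cdot,t)}$ is exactly a zero of $w(\cdot,t)$ in whichever chart contains it, independently of the chart.

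Next I would assemble the global picture. Reflecting both curves across the $x$-axis as in the definition of $S(\cdot)$, the doubled profile curves are smooth and meet the $x$-axis orthogonally at their neck points, and the coefficients above are even in $y$; hence $w$ extends across $y=0$ as a solution of the same linear equation, so the neck becomes an \emph{interior} point of the configuration, whose only boundary is $\partial C_R=\{x=R\}$ (where both flows satisfy the Neumann condition $v_x(R,\cdot)=0$). Covering the doubled curve by the vertical chart near the neck and the horizontal chart away from it, with an overlap annulus on which both charts are nondegenerate --- which can be arranged on any $[0,T']$ with $T'<T$, using the regularity above and the positive lower bound on the neck radius, the latter being exactly what keeps the horizontal graph equation nondegenerate near the neck, where $v_x\to\infty$ --- Angenent's Theorem B applied chart by chart gives assertions (1) and (2): for each $t>0$ the zero set of $w(\cdot,t)$ is finite unless $w(\cdot,t)\equiv0$, and in the latter case the two curves coincide along a sub-arc and hence, being solutions of the same well-posed flow, coincide identically, so $f(\cdot,t)=g(\cdot,t)$; and the total zero count is nonincreasing in $t$ except when a zero is created at the boundary, which --- the $x$-axis no longer being a boundary --- can only occur on $\partial C_R$ (a zero merely migrating through the chart overlap is neither created nor destroyed, so the count is well defined). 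Assertion (3) is Angenent's Theorem C: a tangential intersection at $p$ at time $t_0$ is a zero of $w(\cdot,t_0)$ at which $w$ and $w_\xi$ both vanish, a multiple zero, so the number of sign changes of $w$ near $p$ strictly decreases across $t_0$; as a generic tangency of two curves has contact order exactly two, this means two transverse intersections near $p$ on one side of $t_0$ and none on the other, and ``two before, none after'' versus ``none before, two after'' is decided by whether $p$ is interior (the count must drop) or lies on $\partial C_R$ (the count may rise).

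The step I expect to be the main obstacle is the bookkeeping at the ends of the curve and at the chart overlap: checking that the vertical and horizontal charts admit a nondegenerate common overlap uniformly on compact time intervals, and --- more importantly --- that no intersection point is ever created at the neck. This is exactly where the reflection symmetry and the orthogonality of the profile curves to the $x$-axis at their neck points are used: they turn the neck from what would otherwise be a domain boundary (a potential source of new zeros) into a harmless interior point, at which at worst a symmetric pair of intersections annihilates. Granting this, (1)--(3) are direct transcriptions of Theorems B and C of \cite{angenent88_zero-set}.
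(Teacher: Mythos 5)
Your proposal is correct and takes essentially the same approach as the paper: the paper's own proof is the single sentence ``The proof is an application of Angenent's parabolic Sturmian theorem to $u$, $\tilde u$, and $v$, $\tilde v$,'' and your argument is exactly the natural unpacking of that sentence --- passing to the linear equation for the differences $u-\tilde u$ and $v-\tilde v$, reflecting across the $x$-axis so the neck becomes interior, patching the two charts, and invoking Theorems B and C of Angenent's zero-set paper.
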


The proof is an application of Angenent's parabolic Sturmian theorem to $u$, $\tilde u$, and $v$, $\tilde v$. 

\subsection{Multiplicity}\label{SS:multiplicity}
Multiplicity is a concept in geometric measure theory. Roughly speaking, the multiplicity of a surface being $m$ simply means that we count the surface $m$ times. For example, consider the union of three surfaces $\{y=x^2/n\}$, $\{y=-x^2/n\}$ and $\{y=0\}$. Let $n\to\infty$, the limit is the plane $\{y=0\}$ with multiplicity $3$. For the detailed definition, we refer the readers to \cite{Simon83_GMT}.

Throughout this paper, the multiplicity $2$ convergence is understood as follows. We say a mean curvature flow $\{M(t)\}_{t\in[0,\infty)}$ converges to a plane with multiplicity $2$ if for any compact region $K$ and $\epsilon>0$, there exists $t_\epsilon>0$, such that for $t>t_\epsilon$, $[M(t)\cap K]\backslash B_\epsilon(0)$ can be written as the union of two graphs of functions $f_1(\cdot,t),f_2(\cdot,t)$ over the plane, and $\|f_1(\cdot,t)\|_{C^1}\to 0$, $\|f_2(\cdot,t)\|_{C^1}\to0$ as $t\to\infty$\footnote{Moreover, by Brakke's regularity theorem (see \cite{Brakke78, White05_local}), or Ecker-Huisken's estimate \cite{EckerHuisken91_interior}, this implies that $\|f_1(\cdot,t)\|_{C^l}\to 0$, $\|f_2(\cdot,t)\|_{C^l}\to0$ for any $l\in\Z_+$.}. This convergence is stronger than the higher multiplicity convergence in the sense of geometric measure theory.

We remark that the higher multiplicity convergence in the sense of geometric measure theory can be much more complicated than the pattern described above. Nevertheless, in this paper, any multiplicity $2$ convergence would be the above pattern.

\section{Free boundary cases}\label{S:Free boundary cases}

In this section, we study the solutions to the equation \eqref{mcf equation}. The main goal is to construct an immortal smooth free boundary mean curvature flow within the solid cylinder $C_R$, converging to the free boundary disk with multiplicity $2$ as time approaches $\infty$. The explicit statements are in Theorem \ref{free boundary immortal flow} and Theorem \ref{free boundary multiplicity 2}.

Let us sketch the idea of proof. First, we use the idea of Altschuler-Angenent-Giga \cite{AAG} in the study of rotationally symmetric mean curvature flow to show that the singularities of rotationally symmetric mean curvature flow with free boundary can only occur either on the rotational axis or on the free boundary. 

Next, we construct a family of rotational symmetric free boundary surfaces $\{M^s\}_{s\in[0,1]}$, such that under the MCF, $M^0(t)$ has a singularity on the axis while $M^1(t)$ has a singularity on the boundary. Then an interpolation argument shows that there exists $s_0\in[0,1]$, such that the MCF starting from $M^{s_0}$ is smooth for all future time.

By the first variational formula, $\int_{0}^\infty\int_{M^{s_0}(t)}|\vec H|^2 d\mathcal H^2 dt\leq \area(M^{s_0})<+\infty$. This implies that $M^{s_0}(t)$ subsequentially converges to a stationary varifold with free boundary. To show that the limit is a plane with multiplicity $2$, we need a gradient estimate, as stated in Proposition \ref{gradient estimate}. This estimate is inspired by \cite{AAG}. 

\subsection{Rotationally symmetric MCF with free boundary within a cylinder}
We will introduce the following notations throughout this and the next section. Suppose $f(\cdot, t)$ is a solution to the equation \eqref{mcf equation}, $u(\cdot, t)$, $v(\cdot, t)$ will denote the vertical graph function and the horizontal graph function of $f(\cdot, t)$.

\begin{prop} \label{consistent}
Suppose $\Gamma_t = S(G_{f(\cdot, t)})$, where $f(\cdot, t) \in \mathcal{F}_R$ is a solution to equation \eqref{mcf equation}. If the initial condition $f(\cdot, 0) \in \mathcal{G}_R$, and $T = T(f(\cdot, 0)) > 0$, then $f(\cdot, t) \in \mathcal{G}_R$ for all $t \in (0, T)$.
\end{prop}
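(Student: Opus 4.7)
By Proposition~\ref{long time existence}, the flow $f(\cdot,t)$ exists in $\mathcal{F}_R$ and is smooth in $(x,t)$ for $t>0$, so the only thing to verify is that the strict monotonicity condition \eqref{derivative condition of f}, namely $f_x(x,t)>0$ on $(p(t),R)$, is preserved. My plan is a standard continuity argument combined with the strong minimum principle for linear parabolic equations, in the spirit of Altschuler--Angenent--Giga \cite{AAG}.

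Define
\begin{equation*}
t^{\ast}=\sup\bigl\{t\in[0,T):\ f(\cdot,s)\in\mathcal{G}_R\ \text{for all}\ s\in[0,t]\bigr\}.
\end{equation*}
Combining $f(\cdot,0)\in\mathcal{G}_R$ with the smoothness of the flow and the stability of the endpoint conditions $f_x(p(t),t)=+\infty$ and $f_x(R,t)=0$, continuity yields $t^{\ast}>0$. Assume for contradiction that $t^{\ast}<T$. If $f(\cdot,t^{\ast})\in\mathcal{G}_R$, then continuity would propagate strict monotonicity slightly past $t^{\ast}$, contradicting the definition of $t^{\ast}$; hence $f(\cdot,t^{\ast})\notin\mathcal{G}_R$. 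Together with $f_x(\cdot,s)>0$ on $(p(s),R)$ for $s<t^{\ast}$ and joint continuity of $f_x$, this forces the existence of an interior point $x^{\ast}\in(p(t^{\ast}),R)$ with $f_x(x^{\ast},t^{\ast})=0$.

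Since $f_x(x^{\ast},t^{\ast})=0$, the profile curve has horizontal tangent at $(x^{\ast},f(x^{\ast},t^{\ast}))$, so on a parabolic cylinder $Q=(x^{\ast}-r,x^{\ast}+r)\times(t^{\ast}-r^2,t^{\ast}]$ chosen with $r>0$ small enough that $[x^{\ast}-r,x^{\ast}+r]\subset(p(t),R)$ for all $t\in[t^{\ast}-r^2,t^{\ast}]$, the curve is a horizontal graph $y=v(x,t)$ solving the second equation in \eqref{mcf equation}. Differentiating in $x$, the function $w=v_x$ satisfies on $Q$ the linear parabolic equation
\begin{equation*}
w_t=\frac{w_{xx}}{1+w^2}-\frac{2\,w\,w_x^2}{(1+w^2)^2}+\frac{w_x}{x}-\frac{w}{x^2},
\end{equation*}
whose coefficients are smooth and bounded and whose zeroth-order coefficient $-1/x^2$ is nonpositive on $Q$. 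By the definition of $t^{\ast}$, $w>0$ on $Q\cap\{t<t^{\ast}\}$, while $w(x^{\ast},t^{\ast})=0$ is an interior minimum. The strong minimum principle then forces $w\equiv 0$ on $Q$, contradicting $w>0$ on $Q\cap\{t<t^{\ast}\}$. Therefore $t^{\ast}=T$, and the proposition follows.

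The main obstacle to anticipate is the simultaneous motion of the neck point $p(t)$ and the shifting transition between the vertical and horizontal graph representations, which would make a global linear parabolic analysis (e.g.\ for $v_x$ on the full domain $[p(t)+\e,R]$) cumbersome. My plan sidesteps both difficulties by localizing the strong minimum principle to a small parabolic neighborhood of $(x^{\ast},t^{\ast})$ with $x^{\ast}$ strictly interior to $(p(t^{\ast}),R)$, where the horizontal graph representation is automatically valid and the coefficients are genuinely smooth and bounded, reducing the argument to classical linear parabolic theory.
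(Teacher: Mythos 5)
Your proof is correct and follows essentially the same approach as the paper: both reduce the claim to the strong maximum principle for the linear parabolic equation \eqref{derivative pde} satisfied by $v_x = f_x$. Your continuity argument via $t^{\ast}$ and the localization to a small parabolic cylinder about a first interior zero of $f_x$ is a careful way to handle the moving endpoints $p(t)$ and $R$ (and the matching between the vertical and horizontal graph representations), details that the paper's terse two-sentence proof leaves implicit.
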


\begin{proof}

Recall that $f(\cdot, 0) \in \mathcal{G}_R$ means $u(\cdot, 0)$, $v(\cdot, 0)$ satisfy the derivative condition \eqref{derivative condition of u and v}. Since $u(\cdot, t)$, $v(\cdot, t)$ are solutions to the equation \eqref{mcf equation}, we know that $u_y$ and
 $v_x$ satisfies the following equation
\begin{equation} \label{derivative pde}
\begin{aligned}
& \frac{\partial u_y}{\partial t} = \frac{(u_y)_{yy}}{1 + (u_y)^2} - 2\frac{u_y[(u_y)_y]^2}{(1+(u_y)^2)^2} + \frac{u_y}{u^2}, \\
& \frac{\partial v_x}{\partial t} = \frac{(v_x)_{xx}}{1+(v_x)^2} - 2\frac{v_x[(v_x)_x]^2}{(1+(v_x)^2)^2} + \frac{(v_x)_x}{x} - \frac{v_x}{x^2}.
\end{aligned}
\end{equation}

By the strict maximum principle of the quasilinear equation, we claim that for $t > 0$,
\begin{equation} \label{derivative estimate}
\begin{aligned}
&u_y(y, t)>0,\ v_x(x, t) > 0.
\end{aligned} 
\end{equation}
    
\end{proof}

\begin{remark}
    We note that our initial data may not be smooth but only Lipschitz later on. (see Section \ref{interpolation surfaces}). Nevertheless, we can approximate our initial data by smooth monotone increasing functions. Therefore we can still obtain the desired monotonicity for $t>0$.
\end{remark}

It follows from Proposition \ref{consistent} that, if $f(\cdot, 0) \in \mathcal{G}_R$, then $f(\cdot, t) \in \mathcal{G}_R$. By the derivative condition \eqref{derivative condition of f}, we know that 
\begin{align*}
f(R,t) = \max \{y |\ (x,y) \in G_{f(\cdot, t)} \text{ for some } x \in [0,R]\}.
\end{align*}
$f(R,t)$ can be viewed as the height of the curve $G_{f(\cdot, t)}$, and we prove in the next lemma that it is monotone.

\begin{lem} \label{height decreasing}
$f(R,t)$ is a decreasing function in $t$, and it is strictly decreasing if $f(\cdot,0)$ is not a constant function.
\end{lem}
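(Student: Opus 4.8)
The plan is to show that the height $h(t) := f(R,t)$, which equals $v(R,t)$ for the horizontal graph function, satisfies an ordinary differential inequality along the flow. The key observation is that the boundary point of the profile curve on the line $\{x=R\}$ is exactly the topmost point of $G_{f(\cdot,t)}$ once $f(\cdot,0)\in\mathcal{G}_R$ (by Proposition \ref{consistent}), because the monotonicity $v_x>0$ forces the maximum of $v$ to occur at $x=R$. So I would first reduce to the case $f(\cdot,0)\in\mathcal{G}_R$: if $f(\cdot,0)$ is merely in $\mathcal{F}_R$, one approximates by functions in $\mathcal{G}_R$ (as in the Remark after Proposition \ref{consistent}) and passes to the limit using the continuous dependence of Proposition \ref{continuous dependence}; actually, the cleaner route is to just prove the statement for $t>0$, since $f(\cdot,t)\in\mathcal{G}_R$ holds for $t>0$ whenever $f(\cdot,0)\in\mathcal{G}_R$, and then use continuity in $t$ down to $t=0$.

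Next, I would compute $\frac{d}{dt}h(t)$ directly from the horizontal graph equation. At $x=R$ we have the free boundary condition $v_x(R,t)=0$, so the second equation in \eqref{mcf equation} gives
\[
\frac{d}{dt}v(R,t) = \frac{v_{xx}(R,t)}{1+v_x(R,t)^2} + \frac{v_x(R,t)}{R} = v_{xx}(R,t).
\]
So everything reduces to showing $v_{xx}(R,t)\le 0$. Since $v_x(x,t)\ge 0$ on $[p+\e,R]$ with $v_x(R,t)=0$, the function $v_x$ attains its minimum (value $0$) at the right endpoint $x=R$, which already suggests $v_{xx}(R,t)\le 0$; but I want a genuine argument, not just a one-sided heuristic, because $v_x$ could a priori be identically zero near $R$ in a degenerate way. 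The clean way is to apply the strong maximum principle to the quasilinear equation \eqref{derivative pde} satisfied by $w:=v_x$: this equation has the form $w_t = a\, w_{xx} + b\, w_x + c\, w$ with $a=1/(1+w^2)>0$ and bounded lower-order coefficients (after noting $x$ stays bounded away from $0$ on the relevant region, since the neck point $p>0$). We know $w\ge 0$ everywhere and $w(R,t)=0$. By the Hopf lemma (boundary point lemma) applied at the boundary point $x=R$, either $w\equiv 0$ in a neighborhood — which would force $f(\cdot,t)$ to be constant near $x=R$, and then by unique continuation / the Sturmian-type rigidity the whole curve is a horizontal segment, i.e.\ $f(\cdot,0)$ constant — or $w_x(R,t)<0$, i.e.\ $v_{xx}(R,t)<0$, giving strict decrease. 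In the non-strict (constant) case one checks directly that $h$ is constant, consistent with the statement.

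The main obstacle I anticipate is handling the degenerate/boundary behavior carefully: the region where $v$ is defined is $[p(t)+\e(t),R]$ and the left endpoint $p(t)$ moves with time and may hit the axis; one must make sure the argument only ever uses a neighborhood of $x=R$, where the equation is uniformly parabolic (coefficients involve $1/x$ and $1/x^2$, harmless near $x=R$). A second, more delicate point is the regularity needed to apply Hopf's lemma at $t=0$: Proposition \ref{long time existence} only gives $C^{2+\omega,1+\omega/2}$ up to $t=0$ and smoothness for $t>0$, so I would prove the differential inequality for $t>0$ (where everything is smooth) and then extend the monotonicity of $h$ to the closed interval $[0,T)$ by continuity of $f(R,t)$ in $t$. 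The strictness claim when $f(\cdot,0)$ is non-constant follows because, by the strong maximum principle, $w=v_x>0$ in the interior for $t>0$ unless the curve is a horizontal segment, and then Hopf at $x=R$ yields $v_{xx}(R,t)<0$ for every $t>0$, hence $h$ is strictly decreasing.
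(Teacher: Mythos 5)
Your proposal is correct, but it takes a genuinely different route from the paper's. The paper's proof is a one-line barrier argument: the horizontal line $y=f(R,t)$ is a static mean curvature flow lying on top of $G_{f(\cdot,t)}$ (since $f(\cdot,t)\in\mathcal{G}_R$ by Proposition~\ref{consistent}, so $f(R,t)$ is the maximum), so by the comparison principle (Proposition~\ref{comparison}) it stays on top at later times, giving $f(R,t')\leq f(R,t)$; strictness is then quoted from the strong maximum principle in \cite{stahl1996convergence}. You instead compute the instantaneous rate directly: at $x=R$ the boundary condition $v_x(R,t)=0$ reduces the horizontal graph equation to $\partial_t v(R,t)=v_{xx}(R,t)$, and you invoke Hopf's boundary-point lemma for the parabolic equation \eqref{derivative pde} satisfied by $w=v_x$ (with $w\geq 0$, $w(R,t)=0$, zeroth-order coefficient $-1/x^2<0$, uniform parabolicity near $x=R$ since $p>0$) to conclude $v_{xx}(R,t)<0$ unless $w\equiv 0$. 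Both use the same prerequisites (the free boundary condition and monotonicity from Proposition~\ref{consistent}) and both are sound. The paper's version is shorter and needs essentially no regularity beyond what the comparison principle already requires; your version is more computational but gives slightly more, namely a pointwise negative instantaneous rate of decrease $v_{xx}(R,t)<0$ for $t>0$. Your care in restricting the Hopf argument to $t>0$ (where the flow is smooth) and recovering $[0,T)$ by continuity is the right move, and the constant case is handled consistently in both.
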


\begin{proof}
The horizontal line $y = f(R,t)$ is on top of the curve $G_{f(\cdot, t)}$, and remains static under the mean curvature flow. By Proposition \ref{comparison}, for any $t < t'$,  $y = f(R,t)$ is on top of the curve $G_{f(\cdot, t')}$, thus $f(R, t') \leq f(R, t)$.

Moreover, if $f(\cdot, 0)$ is not a constant function, then the strong maximum principle (\cite[Theorem 3.1]{stahl1996convergence}) yields the result.

\end{proof}

Although Lemma \ref{height decreasing} shows that the height of the profile curve of the flow strictly decreases, we do not have a quantitative estimate of the decreasing rate. To obtain such a bound, we need to construct a new family of barriers.

Given $a \in (0, R)$, Consider a smooth function $l_a \in C^{\infty}([0,R])$ such that 
\begin{equation*}
l_a(x) = 
\left\{
\begin{aligned}
&0 \qquad \text{for} \qquad 0 \leq x \leq \frac{a}{2}, \\
&1 \qquad \text{for} \qquad a \leq x \leq R,
\end{aligned} 
\right.
\end{equation*}
and $l_a' (x) > 0$ for $x \in (\frac{a}{2}, a)$. Let $\Gamma^a$ be the hypersurface given by rotating the graph of $l_a$ along the $y-$axis, and denote the corresponding solution to the mean curvature flow equation by $\Gamma^a (t)$. Since the initial surface $\Gamma^a(0)=\Gamma^a$ is the graph of a Lipschitz function, it follows from the rotational symmetry and the results in \cite{EckerHuisken89_EntireGraph} that there exists a smooth solution $L_a(x,t)$ to the horizontal graph equation with initial data $L_a(x, 0) = l_a(x)$, and boundary condition $\frac{\partial}{\partial x} L_a (0, t) = \frac{\partial}{\partial x} L_a (R, t) = 0$, such that $\Gamma^a(t)$ is given by rotating the graph of $L_a(x,t)$ along the $y-$axis. 

The following lemma shows that after evolving for a sufficiently long time, the height of the $L_a(x,t)$ will decrease for a definite amount.

\begin{lem} \label{height barrier}

There exist a constant $\beta_{a, R} > 0$ and a time $T_{a, R} > 0$ such that $L_a(R, t) < 1 - \beta_{a,R}$ for all $t > T_{a, R}$.

\end{lem}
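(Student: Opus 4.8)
The plan is to use the evolution of the horizontal graph $L_a(x,t)$ started from $l_a$, and exploit that the flow is \emph{strictly} decreasing in height (by Lemma~\ref{height decreasing}, or rather its proof, applied to $\Gamma^a(t)$, which is a graph and has $f(\cdot,0)$ nonconstant) together with a compactness/continuity argument to promote the strict decrease into a \emph{definite} decrease after a definite waiting time. Concretely, first I would record the relevant a priori properties of $L_a$: it solves the horizontal graph equation with Neumann conditions at $x=0$ and $x=R$, by the comparison principle (Proposition~\ref{comparison}, applied with the static planes $y=0$ and $y=1$ as barriers) we have $0 \le L_a(x,t) \le 1$ for all $t$, and by the interior estimates of Ecker--Huisken \cite{EckerHuisken89_EntireGraph} together with standard parabolic boundary regularity the family $\{L_a(\cdot,t)\}$ is smooth and has locally uniform-in-time $C^k$ bounds for $t \ge 1$, say. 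In particular $L_a(R,t)$ is a smooth, nonincreasing function of $t$; if we can show it is not eventually equal to $1$, we are essentially done, but we need the quantitative version.

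Next, the key point: $L_a(R,t)$ is strictly decreasing as long as $L_a(\cdot,t)$ is nonconstant, and it can never become constant in $x$ (a nonconstant Lipschitz graph cannot flow to the static plane $y=1$ in finite time while respecting $L_a(R,t)<1$ — indeed any plane it converges to must be $y=c$ with $c \le \lim_t L_a(R,t) \le L_a(R,0^+) < 1$ once $t>0$, since for $t>0$ the strong maximum principle \cite[Theorem 3.1]{stahl1996convergence} forces $L_a(R,t)$ strictly below $\sup_x l_a = 1$). So fix any $t_1 > 0$; then $L_a(R,t_1) =: 1-2\beta_{a,R}$ for some $\beta_{a,R}>0$, and since $L_a(R,t)$ is nonincreasing, $L_a(R,t) \le 1-2\beta_{a,R} < 1-\beta_{a,R}$ for all $t \ge t_1 =: T_{a,R}$. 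This already gives the statement directly, without any subtle limiting argument — the content is entirely in the \emph{strict} inequality $L_a(R,t_1)<1$, which is the strong maximum principle applied to the graph flow $\Gamma^a(t)$ sitting strictly below the static plane $y=1$ away from $x < a/2$, propagated to all of $[0,R]$ by strong maximum principle at time $t_1>0$.

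The main obstacle, and the only place requiring care, is justifying that $L_a(R,t_1)<1$ strictly for some (hence all sufficiently large) positive time, i.e. that the flow genuinely pulls the outer boundary down. The clean way is: at $t=0$ the profile $l_a$ is \emph{not} identically $1$, it solves the graph equation, and at any point $x_0 \in (a/2,a)$ with $l_a(x_0)<1$ the parabolic strong maximum principle (comparing $1-L_a$ with the zero solution, noting $1-L_a \ge 0$ solves a linear parabolic equation with bounded coefficients once we freeze the quasilinear terms along the solution) gives $L_a(x,t)<1$ for \emph{all} $x\in[0,R]$ and all $t>0$, including $x=R$; then monotonicity of $L_a(R,\cdot)$ closes the argument with $\beta_{a,R} := (1-L_a(R,1))/2 > 0$ and $T_{a,R}:=1$. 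I should double-check that the Neumann condition at $x=R$ does not obstruct the strong maximum principle reaching the boundary point $x=R$ — it does not, since the Hopf lemma / boundary point version of the strong maximum principle for the Neumann problem still forces the interior strict inequality to extend up to and including the boundary. Alternatively, one can avoid the Neumann subtlety entirely by instead estimating $L_a(R',t)$ at an interior point $R'<R$ and using $L_a(R,t)\le$ something comparable via the monotonicity in $x$; but I expect the direct strong-maximum-principle argument at $x=R$ to go through, and this is the single step I would write out most carefully.
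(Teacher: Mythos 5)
Your proposal is correct and takes essentially the same route as the paper: both reduce to showing that $L_a(R,t)$ is nonincreasing and strictly below $1$ for $t>0$ via the strong maximum principle (the paper folds this into an appeal to the proof of Lemma~\ref{height decreasing} and Stahl's strong maximum principle \cite[Theorem 3.1]{stahl1996convergence}), and then fix $T_{a,R}=1$ and $\beta_{a,R}=\tfrac12(1-L_a(R,1))$. Your explicit attention to the Hopf-lemma step at the Neumann boundary $x=R$ is the right detail to worry about and is exactly what Stahl's cited theorem handles.
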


\begin{proof}
By the maximum principle, we have $0 < L_a(x,t) < 1$ for all $x \in (0, R), t > 0$. In addition, $\frac{\partial}{\partial x} L_a \geq 0$ is bounded. By a similar argument as in the proof of Lemma \ref{height decreasing}, we know that $L_a(R, t)$ is a strictly decreasing function of $t$. 
\end{proof}

Next, we will examine the behavior of the neck point when the singularity emerges during the mean curvature flow.

In the proof of Lemma \ref{vertical graph gradient upper bound}, Corollary \ref{height 0 neck R}, Lemma \ref{vertical graph gradient lower bound}, Proposition \ref{singular time}, we use the following settings and notations. Let $f(\cdot, t) \in \mathcal{F}_R$ be a solution to equation \eqref{mcf equation}, with the initial condition $f(\cdot, 0) \in \mathcal{G}_R$, and the first singular time is $T > 0$. By Proposition \ref{consistent}, we know that $f(\cdot,t)$ has a positive derivative with respect to $x$ except at the two endpoints. Then by the inverse function theorem, we can extend the vertical graph equation $u(\cdot, t)$ to be defined on $[0, f(R,t)]$, and $u(\cdot, t)$ is smooth on $[0, f(R,t))$, for $t \in (0, T)$.

We adapt the idea from \cite[Theorem 4.3]{AAG} to get the following gradient estimate.

\begin{lem}
\label{vertical graph gradient upper bound}
Given $T>0$, there exists a continuous nonincreasing function $\sigma : (0, \frac{R}{2}] \to \R_{+}$ that only depends on the neck point of the initial condition (i.e. $u(0,0)$) and $T$, such that 
\begin{align*}
0 < u_y(y,t) \leq e^{\sigma(\delta)/t}, \quad \delta = \min \{u(y,t), R - u(y,t)\},
\end{align*}
holds for all $0 < t < T$, $0 < y < f(R,t)$.

\end{lem}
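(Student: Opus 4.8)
The plan is to derive a differential inequality for $w := \log u_y$ on the region where the vertical graph is defined, and then close it off using a carefully chosen barrier function adapted to the two ``dangerous'' boundaries: the rotational axis $\{x=0\}$ (equivalently $u=0$) and the free boundary $\{x=R\}$ (equivalently $u=R$). From the equation \eqref{derivative pde} for $u_y$, a direct computation gives that $w = \log u_y$ satisfies a parabolic equation of the schematic form
\[
w_t = \frac{w_{yy}}{1+(u_y)^2} + (\text{first-order terms in } w_y) + \frac{1}{u^2},
\]
so the zeroth-order ``bad'' term $1/u^2$ is governed entirely by how close $u$ is to the axis. Near the free boundary $x=R$, the relevant control comes instead from the Neumann condition $v_x(R,t)=0$, which in the $u$-picture forces $u_y \to +\infty$ in a controlled way; the role of $R - u$ is to quantify proximity to that boundary. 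The first step, then, is to record this equation for $w$ precisely, together with the boundary behavior of $w$ at $u=0$ and $u=R$.

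The second step is to build the barrier. Following the structure of the estimate in \cite[Theorem 4.3]{AAG}, I would look for a supersolution of the form $\overline{w}(y,t) = \sigma(\delta(y,t))/t$ where $\delta = \min\{u, R-u\}$, or more precisely work with the smooth quantity $\delta = u(R-u)/R$ (comparable to $\min\{u,R-u\}$) to avoid the corner in the minimum. One feeds $\overline w$ into the equation for $w$: the $1/t$ time-dependence produces a strongly negative term $-\sigma(\delta)/t^2$ on the left, which must dominate the spatial derivative terms coming from $\sigma(\delta)$ composed with $u$, plus the forcing $1/u^2 \lesssim 1/\delta^2$. Choosing $\sigma$ to grow fast enough as $\delta \to 0$ (something like $\sigma(\delta) \sim C/\delta$ or a suitable polynomial in $1/\delta$, with the constant depending on $T$ and on the initial neck point $u(0,0)$, the latter entering through a lower bound on $u$ away from the axis that one gets from a catenoid or plane barrier) makes $\overline w$ a supersolution on $(0,T)$. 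One also checks the boundary comparison: at $t \to 0^+$, $\overline w \to +\infty$ uniformly on compact subsets of the interior, dominating the (finite, by Proposition \ref{long time existence}) initial gradient; and at the spatial boundaries $u \to 0$ or $u \to R$, $\overline w \to +\infty$, which handles the fact that $w$ itself may blow up there. The maximum principle (in the form used throughout the paper, valid with the boundary terms because only the parabolic maximum principle is invoked) then yields $w \le \overline w$, i.e. $u_y \le e^{\sigma(\delta)/t}$, and monotonicity of $\sigma$ can be arranged by replacing $\sigma$ with its nonincreasing rearrangement.

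The main obstacle I anticipate is getting the barrier to simultaneously handle \emph{both} boundaries with a single function $\sigma(\delta)$. Near the axis the obstruction is the explicit forcing term $1/u^2$, which is a genuine source term in the PDE; near the free boundary the obstruction is geometric — $u_y$ is genuinely unbounded there — and must be absorbed not through the equation but through the interplay of the $\delta(R-u)$ factor vanishing and the $1/t$ blow-up. Reconciling the required growth rates of $\sigma$ near $\delta = 0$ from these two quite different mechanisms, and verifying that the dependence on the data genuinely reduces to $u(0,0)$ and $T$ (so that, crucially, it does \emph{not} degenerate as $R \to \infty$ later in the paper), is the delicate part; one likely needs an auxiliary barrier (plane or catenoid, via Proposition \ref{comparison}) to first pin $u$ away from the axis quantitatively in terms of $u(0,0)$, and only then run the $\log u_y$ argument. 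A secondary technical point is the non-smoothness of the initial data flagged in the Remark after Proposition \ref{consistent}: since the estimate is only claimed for $t>0$ and is independent of higher regularity of $f(\cdot,0)$, one runs the argument for smooth approximating initial data and passes to the limit.
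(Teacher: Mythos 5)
Your approach is genuinely different from the paper's, and it runs into a gap that I think is hard to close. The paper does not try to build a supersolution for $\log u_y$ directly. Instead it introduces the auxiliary ``step-function'' flow $\Gamma^a(t)$ with profile $y = L_a(x,t)$ (defined just before Lemma~\ref{height barrier}), shows that its slope $p = \partial_x L_a$ satisfies a \emph{linear} parabolic equation and hence obeys a quantitative positivity estimate $p \geq e^{-A_\delta/t}$ on $[\delta, R-\delta]$, and then compares $u$ with the one-parameter family of vertical translates $\Gamma^a_\xi(t)$ by counting intersection points via the Sturmian theorem. Picking the translate $\xi$ so that $\Gamma^a_\xi(t_0)$ meets $U(t_0)$ exactly at $(u(y_0,t_0), y_0)$, the Sturmian argument forbids a second intersection and hence forces $u_y(y_0,t_0) \leq w_y(y_0-\xi,t_0) = 1/p \leq e^{A_\delta/t_0}$. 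All the hard quantitative work is done on the auxiliary flow, which is a smooth graph with a standard Neumann condition and no free-boundary degeneracy; the transfer to the actual flow is purely topological.

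Your proposal instead tries to verify that $\overline{w} = \sigma(\delta)/t$ is a supersolution of the equation for $w = \log u_y$ on $\{0 < y < f(R,t),\ 0 < t < T\}$. There are two serious obstacles. First, on the lateral boundary $y \to f(R,t)$ we have $u \to R$ and $u_y \to +\infty$, so $w \to +\infty$ there; the maximum principle requires $w \leq \overline{w}$ on the parabolic boundary, and since both sides blow up you need an a~priori rate for the blow-up of $u_y$ near the free boundary. That rate is exactly the kind of information the lemma is supposed to deliver, so the argument is circular there, and I do not see how to break the circle without an independent boundary estimate. (The paper never needs such a rate because the intersection count it uses is a global, codimension-zero quantity insensitive to how fast $u_y$ diverges.) Second, $\delta$ depends on $u(y,t)$ itself, so $\overline{w}_t$, $\overline{w}_y$, $\overline{w}_{yy}$ all involve $u_t$, $u_y$, $u_{yy}$; at a touching point one can replace $u_y$ by $e^{\overline{w}}$, but $u_{yy}$ enters $\overline{w}_{yy}$ through $\delta_{yy}$ and is not controlled by the touching conditions alone, and the term $-\sigma(\delta)/t^2$ in $\overline{w}_t$ has the \emph{wrong} sign for a supersolution and must be absorbed by the spatial-derivative terms. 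Writing down $\sigma$ that simultaneously closes these estimates, matches the boundary blow-up rate, and depends only on $u(0,0)$ and $T$ is, at best, a substantially longer computation than the paper's proof, and I am not convinced it goes through. I would recommend switching to the intersection-counting strategy: it is exactly what the paper's earlier emphasis on Angenent's Sturmian theorem is setting up.
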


\begin{proof}

Let $a = u(0,0)$. As shown in equation \eqref{derivative pde}, we know $p(x,t) = \frac{\partial}{\partial x} L_a(x,t)$ satisfies the linear parabolic equation
\begin{align*}
\frac{\partial p}{\partial t} = \frac{1}{1+p^2} p_{xx} - \left(\frac{2p p_x}{(1+p^2)^2} - \frac{1}{x}\right)p_x - \frac{1}{x^2} p.
\end{align*}

Since $p(x,0)$ is positive on $(\frac{a}{2},a)$, it follows from equation $(4.3)$ in the proof of Theorem $4.3(b)$ in \cite{AAG} that for every $\delta > 0$, there exists a constant $A_{\delta} < \infty$ such that
\begin{equation} \label{p estimate}
p(x,t) \geq e^{-A_{\delta}/t},   
\end{equation}
for all $\delta \leq x \leq R - \delta$ and all $0 < t < T$. We can choose the constant $A_\delta$ so that it is nonincreasing in $\delta$. 

Next, we translate $\Gamma^a(t)$ along the $y-$axis by $\xi$ to get a new mean curvature flow $\Gamma^a_{\xi}(t)$. Its profile curve is given by
\begin{align*}
y = L_a(x,t) + \xi.
\end{align*}

Since $\frac{\partial}{\partial x} L_a(x,t) > 0$ for all $t > 0$, for each $t > 0$, denote the inverse function of $L_a(x,t)$ by $w(y, t)$. Therefore we can also represent $\Gamma^a_{\xi}$ by the curve $x = w(y-\xi, t)$. Now we consider $0 < t_0 < T$ and $0 < y_0 < f(R, t_0)$. There is a unique $\xi \in \R$ with
\begin{equation} \label{wu relateion}
w(y_0 - \xi, t_0) = u(y_0, t_0).
\end{equation}

Let $U(t)$ be the union of the graph of $u(y, t)$ and its reflection with respect to the $x-$axis (the graph of $u(-y,t)$). By the Sturmian Theorem, $U(t)$ and the graph of $w(y-\xi, t)$ cannot have fewer intersections when $t < t_0$ than what they have when $t = t_0$.

As $t \downarrow 0$, the graph of $w(y- \xi, t)$ (i.e. $\Gamma^a_{\xi}(t)$) converges to $\Gamma^a_{\xi}(0)$. This hypersurface intersects $U(0)$ exactly once (since the neck point of $U(0)$ is $(a,0)$). Therefore, the graph of $w(y-\xi, t_0)$ and $U(t_0)$ only intersect once. 

We prove $u_y(y_0, t_0) \leq w_y(y_0 - \xi, t_0)$ by contradiction. If $u_y(y_0, t_0) > w_y(y_0 - \xi, t_0)$, then for $\e > 0$ small, $u(y_0 - \e, t_0) < w(y_0 - \e - \xi, t_0)$. By using the monotonicity of the functions $u,w$, we know that as the $y$ coordinate of $U(t_0)$ decreases from $y_0$ to $0$, the $x$ coordinate of $U(t_0)$ is at least $u(0,t_0)$, and $u(0,t_0) > 0$ (otherwise a singularity at the origin appears at time $t_0$). On the other hand, the $x$ coordinate of the graph of $w(y - \xi,t_0)$ decreases to $0$. By applying the intermediate value theorem, $U(t_0)$ and the graph of $w(y-\xi, t_0)$ intersect at least twice, which contradicts the fact that they have at most one intersection point.

\begin{figure}[htbp]
\centering
\begin{minipage}[t]{0.48\textwidth}
\centering
\begin{tikzpicture}
    \draw[->] (0,0) -- (5,0) node[below] {$x$};
    \draw[->] (0,-3) -- (0,3) node[left] {$y$};
    \node[below left] at (0,0) {$O$};
    \draw plot[smooth,tension=.55]
coordinates {(4,2.4) (3,2.3) (2,2.1) (1.3,1.5) (1,0) (1.3,-1.5) (2,-2.1) (3,-2.3) (4,-2.4)};
    \node[below] at (3,3) {$U(0)$};
    \draw (0,0.5) -- (0.5,0.5);
    \draw plot[smooth,tension=.2]
coordinates {(0.5,0.5) (0.6,0.55) (0.7,0.7) (0.8,1.3) (0.9,1.45) (1,1.5)};
    \draw (1,1.5) -- (4,1.5);
    \node[below] at (3,1.5) {$\Gamma^a_{\xi}(0)$};
    \draw (4,-3) -- (4,3) node[right] {$x = R$};
    \fill(1.3,1.5)circle(1.5pt);
    \end{tikzpicture}
\caption{Initial conditions of these flows.}
\end{minipage}
\begin{minipage}[t]{0.48\textwidth}
\centering
 \begin{tikzpicture}
    \draw[->] (0,0) -- (5,0) node[below] {$x$};
    \draw[->] (0,-3) -- (0,3) node[left] {$y$};
    \node[below left] at (0,0) {$O$};
    \draw (4,-3) -- (4,3) node[right] {$x = R$};
    \draw[domain = 0 : 180] plot ({4- 3.6*sin (\x)}, {1.4*cos (\x)});
    \node[below] at (3.5,1.4) {$U(t_0)$};
    \draw plot[smooth,tension=.4]
coordinates {(0,0.5) (1,0.6) (1.2,1) (1.5,1.2) (2,1.3)};
    \node[below] at (1.6,2)
    {$\Gamma^a_{\xi}(t_0)$};    \fill(1.15,0.85)circle(1.5pt)node[below right]{\scriptsize\( (u(y_0,t_0), y_0) \)};
    \fill(0.69,0.54)circle(1.5pt);
    \end{tikzpicture}
\caption{By the Sturmian theorem, the phenomenon in this figure cannot happen.}
\end{minipage}
\end{figure}

Let $\delta = \min \{u(y_0,t_0), R - u(y_0,t_0)\}$, by equation \eqref{p estimate}, \eqref{wu relateion}, and the inverse function theorem, $
u_y(y_0, t_0) \leq w_y(y_0 - \xi, t_0) \leq e^{A_{\delta}/t_0}$.

\end{proof}

\begin{rmk}
From the free boundary condition, we can see that $\sigma(\delta) \to \infty$ as $\delta \to 0$.
\end{rmk}

By Lemma \ref{height decreasing}, we know the limit of the height $h = \lim\limits_{t \to T} f(R,t)$ exists. The above gradient estimate implies that if the height of the function $f(\cdot, t)$ tends to $0$, then the neck point must tend to the boundary of the cylinder. In other words, the flow must shrink to a point on the boundary. 

\begin{coro} \label{height 0 neck R}

If $h = 0$, then $\lim\limits_{t \to T} u(0,t) = R$.

\end{coro}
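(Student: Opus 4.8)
The plan is to argue by contradiction and exploit the gradient estimate from Lemma \ref{vertical graph gradient upper bound} together with the shrinking-height hypothesis $h=0$. Suppose $h=0$ but $\lim_{t\to T}u(0,t)=a_\infty<R$. Since $u(0,t)$ is the $x$-coordinate of the neck point and the curve $G_{f(\cdot,t)}$ is monotone in $x$, the whole profile curve lives in the region $\{x\ge u(0,t)\}$, so for $t$ close to $T$ the neck point stays away from the boundary, say $u(0,t)\le (a_\infty+R)/2$ eventually. At the same time, because $f(R,t)\to h=0$, the curve $G_{f(\cdot,t)}$ is being squeezed into a thinner and thinner horizontal strip $\{0\le y\le f(R,t)\}$ as $t\to T$, while it must still connect the neck point $(u(0,t),0)$ on the $x$-axis to the boundary point $(R,f(R,t))$ on $\{x=R\}$ — and these two $x$-values differ by at least $R-(a_\infty+R)/2=(R-a_\infty)/2>0$.

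The key step is to turn this geometric squeezing into a contradiction with the gradient bound on the vertical graph function $u$. First I would fix a height level, say $y=\eta$ small but positive, and consider the point $(u(\eta,t),\eta)$ on the vertical graph. For $t$ near $T$ we have $f(R,t)<\eta$ fails eventually — rather, once $f(R,t)$ becomes tiny, the vertical graph function $u(\cdot,t)$ is only defined on $[0,f(R,t)]$, a vanishingly small interval. On this interval $u$ runs from $u(0,t)$ (near $a_\infty$, bounded away from both $0$ and $R$) up to $u(f(R,t),t)$, which by monotonicity is at most $R$ but is the $x$-coordinate where the vertical graph meets the horizontal graph; crucially the horizontal graph $v$ occupies $x\in[u(f(R,t),t),R]$ at heights at most $f(R,t)$. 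So somewhere on the tiny interval $[0,f(R,t)]$ the function $u$ must climb by a definite amount unless the junction point $u(f(R,t),t)$ itself is close to $R$ — but if $u(f(R,t),t)\to R$ then, combined with $u(0,t)\to a_\infty<R$, the total variation of $u$ over an interval of length $f(R,t)\to 0$ is at least $R-a_\infty>0$, forcing $\sup u_y\to\infty$. Applying Lemma \ref{vertical graph gradient upper bound} at a point where $\delta=\min\{u,R-u\}$ is bounded below (which happens along the relevant part of the curve, since $u$ transits through the value $(a_\infty+R)/2$, say, keeping $\delta\ge (R-a_\infty)/2$ there) gives $u_y\le e^{\sigma(\delta_0)/t}\le e^{\sigma(\delta_0)/t}$, a bound that stays finite as $t\to T$ provided $T>0$ is not approached — but here $T$ could be finite, so the bound $e^{\sigma(\delta_0)/t}$ is uniformly bounded for $t$ in a neighborhood of $T$ when $T<\infty$, and bounded by $e^{\sigma(\delta_0)/t}\to e^{0}$-type behavior is irrelevant; what matters is it is \emph{finite and $t$-independent near $t=T$}, contradicting $u_y\to\infty$.

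Let me restate the mechanism cleanly: by Lemma \ref{vertical graph gradient upper bound}, at any point of the vertical graph whose $x$-coordinate $u$ satisfies $\delta_0\le u\le R-\delta_0$, we have $u_y\le e^{\sigma(\delta_0)/t}$, which is bounded above by some constant $C(\delta_0,T)$ for all $t\in[T/2,T)$. Integrating $u_y$ along the portion of the vertical graph lying in the slab $\{\delta_0\le x\le R-\delta_0\}$ shows that this portion spans a $y$-interval of length at least $(R-2\delta_0-a_\infty-o(1))/C(\delta_0,T)$, which is a positive constant independent of $t$. But that $y$-interval is contained in $[0,f(R,t)]$, whose length tends to $0$ — contradiction. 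Choosing $\delta_0=(R-a_\infty)/4$ makes everything explicit.

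The main obstacle I anticipate is bookkeeping the geometry of where the vertical graph and horizontal graph meet, and making sure the relevant arc of the profile curve genuinely lies in a region where $\delta=\min\{u,R-u\}$ is bounded below — one has to rule out the scenario where the curve hugs the boundary $x=R$ for most of its length and only the neck point is interior. But monotonicity in $x$ (from $f(\cdot,t)\in\mathcal{G}_R$, Proposition \ref{consistent}) forces the curve to pass through every $x$-value between $u(0,t)$ and $R$ exactly once, so it must cross the line $x=(a_\infty+R)/2$; near that crossing $\delta$ is bounded below, and the height must increase across that crossing by a non-negligible amount as traced from the $x$-axis — except that the height can only increase up to $f(R,t)\to 0$. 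That is precisely the contradiction, and I expect the only real care needed is in handling the limit $t\to T$ uniformly, which is why the $t$-independence of the gradient bound on $[T/2,T)$ (immediate once $T>0$, whether finite or infinite) is essential.
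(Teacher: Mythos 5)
Your argument is correct and is essentially the paper's own proof: both invoke Lemma~\ref{vertical graph gradient upper bound} and integrate the vertical-graph gradient $u_y$ across a slab $\{x\in[\delta_0,R-\delta_0]\}$ where $\delta=\min\{u,R-u\}$ is bounded below, concluding that the $y$-extent of the vertical graph in that slab has a positive, $t$-independent lower bound, which contradicts $f(R,t)\to 0$. The only differences are cosmetic: the paper works with a thin slab $[R-\e,R-\e/2]$ near the boundary (avoiding any case analysis on whether $u(0,t)\ge\delta_0$) and phrases the contradiction along a sequence $t_i\nearrow T$ rather than assuming $\lim_{t\to T}u(0,t)$ exists, but these do not change the substance of the argument.
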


\begin{proof}

We prove this by contradiction. Suppose not, then there exists $0 < \e < \frac{R}{2}$ and an increasing sequence $\{t_i\}$ such that $\lim\limits_{i \to \infty} t_i = T$, $\lim\limits_{i \to \infty} u(0,t_i) < R - \e$.

Then $f(x,t_i)$ is well-defined on $[R-\e, R - \frac{\e}{2}]$, let $a(t_i) = f(R -\e, t_i), b(t_i) = f(R - \frac{\e}{2}, t_i)$, then by Lemma \ref{vertical graph gradient upper bound},
\begin{align*}
\frac{\e}{2} = u(b(t_i),t_i) - u(a(t_i), t_i) = \int_{a(t_i)}^{b(t_i)} u_y (y, t_i) dy \leq (b(t_i) - a(t_i)) e^{\sigma(\frac{\e}{2})/t_i}.
\end{align*}

Hence $f(R,t_i) \geq b(t_i) \geq \frac{\e}{2} e^{-\sigma(\frac{\e}{2})/t_i} \geq \frac{\e}{2} e^{-\sigma(\frac{\e}{2})/T}$ for all $i$. Moreover $\lim\limits_{t_i\to T}f(R,t_i)\geq \frac{\e}{2} e^{-\sigma(\frac{\e}{2})/T}>0$, which contradicts to $\lim\limits_{t \to T} f(R,t) = 0$.

\end{proof}

On the other hand, if the limit of the height $h$ is not zero, we obtain an improved gradient estimate.

\begin{lem} \label{vertical graph gradient lower bound}
If $h > 0$, then for any $0 < a < h$, let $\lambda = \frac{\pi}{h - a}$, there exists a constant $\varepsilon > 0$ such that $u_y(y,t) \geq \varepsilon e^{-\lambda^2 t} \sin(\lambda (y-a))$ for all $y \in [a,h], 0 \leq t < T$. In addition, $u(a,t) \leq R - \frac{2\varepsilon}{\lambda} e^{-\lambda^2 T}$.
\end{lem}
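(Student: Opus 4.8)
The plan is to bound $\psi:=u_y$ from below by a separated-variables barrier and then integrate in $y$. Set
\[
\phi(y,t):=\varepsilon\,e^{-\lambda^2 t}\sin(\lambda(y-a)),\qquad (y,t)\in[a,h]\times[0,T).
\]
Note that $\phi$ solves the heat equation $\phi_t=\phi_{yy}$, that $\phi\equiv 0$ on $\{y=a\}\cup\{y=h\}$ (because $\lambda(h-a)=\pi$) while $\phi>0$ in the interior, and that $|\phi_y|\le\varepsilon\lambda$ throughout. Since $f(R,\cdot)$ is strictly decreasing with limit $h$ by Lemma \ref{height decreasing}, we have $f(R,t)>h$ for $t<T$, so $[a,h]$ lies in the range where $u(\cdot,t)$ is smooth and increasing and where $u_y>0$ by Proposition \ref{consistent}; there, by \eqref{derivative pde},
\[
\psi_t=\frac{\psi_{yy}}{1+\psi^2}-\frac{2\psi\,\psi_y^2}{(1+\psi^2)^2}+\frac{\psi}{u^2},\qquad 0<u<R.
\]

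Next I would fix $\varepsilon>0$ so small that (i) $u_y(y,0)\ge\varepsilon\sin(\lambda(y-a))$ on $[a,h]$, which is possible because $u_y(\cdot,0)$ is continuous and strictly positive on the compact interval $[a,h]$, and (ii) $2\varepsilon^2\lambda^2<1/R^2$. Then set $w:=\psi-\phi$ and argue on a slab $[a,h]\times[0,T']$ with $T'<T$ arbitrary. By (i), $w\ge 0$ at $t=0$; on the lateral boundary $w>0$ since $\psi>0=\phi$ there. If $\min w<0$, it is attained at a point $(y_1,t_1)$ with $y_1\in(a,h)$, $t_1>0$, where $\psi_y=\phi_y$, $\psi_{yy}\ge\phi_{yy}=-\lambda^2\phi$, and $\psi_t\le\phi_t=-\lambda^2\phi$. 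Since $0<(1+\psi^2)^{-1}\le 1$ this gives $\psi_{yy}/(1+\psi^2)\ge-\lambda^2\phi$, while, using $|\psi_y|=|\phi_y|\le\varepsilon\lambda$, $(1+\psi^2)^2\ge 1$, $u<R$, $\psi>0$,
\[
-\frac{2\psi\,\psi_y^2}{(1+\psi^2)^2}+\frac{\psi}{u^2}=\psi\left(\frac1{u^2}-\frac{2\psi_y^2}{(1+\psi^2)^2}\right)\ge\psi\left(\frac1{R^2}-2\varepsilon^2\lambda^2\right)>0.
\]
Adding, $\psi_t>-\lambda^2\phi=\phi_t$ at $(y_1,t_1)$, contradicting $\psi_t\le\phi_t$. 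Hence $u_y\ge\phi$ on each such slab, so on all of $[a,h]\times[0,T)$.

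For the last assertion, integrate in $y$ and use that $u(\cdot,t)$ is increasing with $u(f(R,t),t)=R$ and $h\le f(R,t)$, so $u(h,t)\le R$:
\[
R-u(a,t)\ \ge\ u(h,t)-u(a,t)=\int_a^h u_y(y,t)\,dy\ \ge\ \varepsilon e^{-\lambda^2 t}\int_a^h\sin(\lambda(y-a))\,dy=\frac{2\varepsilon}{\lambda}\,e^{-\lambda^2 t}\ \ge\ \frac{2\varepsilon}{\lambda}\,e^{-\lambda^2 T},
\]
which rearranges to $u(a,t)\le R-\frac{2\varepsilon}{\lambda}e^{-\lambda^2 T}$.

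The hard part is the gradient term $-2\psi\,\psi_y^2/(1+\psi^2)^2$ in the $u_y$-equation, which has the ``wrong'' sign and makes a crude comparison against a heat-equation barrier fail. The fix is the standard observation that at a first interior touching point the slope $\psi_y$ of $u_y$ equals the slope $\phi_y$ of the barrier, which has size $\varepsilon\lambda$, so this term is quadratically small in $\varepsilon$ and is absorbed by the genuinely favorable reaction term $\psi/u^2\ge\psi/R^2$ once $\varepsilon$ is chosen small; this is also why the barrier's amplitude must be kept small rather than normalized. One must additionally verify that the touching point is interior in space — it is, because $u_y>0$ on $[a,h]$ for $t<T$ — and, if the initial datum is only Lipschitz rather than in $\mathcal{G}_R$, run the comparison from a small time $\tau>0$ where $u_y(\cdot,\tau)$ is continuous and positive (or approximate by smooth data), which only affects the admissible value of $\varepsilon$.
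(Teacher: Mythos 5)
Your proof is correct, but it takes a genuinely different route from the paper's. The paper performs the change of variable $\theta=\arctan u_y$, so that $\theta_t - \theta_{yy}/(1+u_y^2) = u_y/(u^2(1+u_y^2)) > 0$: the arctangent transformation \emph{eliminates} the quadratic gradient term $-2u_y(u_y)_y^2/(1+u_y^2)^2$ entirely, so the Dirichlet barrier $\varphi = \varepsilon e^{-\lambda^2 t}\sin(\lambda(y-a))$ (which is a subsolution of the same linear operator, because $\varphi_{yy}\le 0$) can be compared to $\theta$ directly, and the elementary inequality $\tan\theta\ge\theta$ converts the conclusion back to $u_y$. In that approach the only requirement is $\varepsilon\le\min_{[a,h]}\theta(\cdot,0)$. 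You instead compare directly with $\psi=u_y$, keep the problematic gradient term, and absorb it at a hypothetical touching point using the observation that $\psi_y=\phi_y$ there, so the term is $O(\varepsilon^2\lambda^2)$ and is dominated by the reaction term $\psi/u^2\ge\psi/R^2$ once $2\varepsilon^2\lambda^2<1/R^2$. Both arguments are sound; the paper's is slightly cleaner (no extra smallness constraint on $\varepsilon$, no touching-point case analysis), while yours is more direct and avoids introducing the angle variable. Two small points: you should justify the second-order touching-point conditions by the usual $w+\epsilon(t+1)$ regularization or by citing the parabolic comparison principle for quasilinear operators, and the hypothesis $f(\cdot,0)\in\mathcal{G}_R$ already gives $u_y(\cdot,0)$ smooth and positive on $[a,h]$ (since $h<f(R,0)$ when $f(\cdot,0)$ is nonconstant), so the caveat about Lipschitz initial data is unnecessary here. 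As a side note, the paper's displayed integral $\int_a^h \varepsilon e^{-\lambda^2 t}\sin(\lambda(y-a))\,dy=\frac{2}{\lambda}e^{-\lambda^2 t}$ is missing the factor $\varepsilon$, which you correctly include.
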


\begin{proof}

For $0 \leq t < T$, $u(y,t)$ is well-defined on $[0,h]$ and $u_y(y,t) > 0$ for $y \in (0,h]$. We know $u$ satisfies the vertical graph equation
\begin{align*}
u_t = \frac{u_{yy}}{1+(u_y)^2} - \frac{1}{u}.
\end{align*}

Define $\theta (y,t) = \arctan u_y(y,t)$, then $\theta(y,t) \in (0, \frac{\pi}{2})$ for $y \in [a,h]$, and
\begin{equation*}
\begin{aligned}
& u_t = \theta_y - \frac{1}{u}, \qquad  \theta_t = \frac{1}{1 + (u_y)^2} (u_y)_t = \frac{1}{1 + (u_y)^2} (u_t)_y = \frac{1}{1 + (u_y)^2} \left(\theta_{yy} + \frac{u_y}{u^2}\right).
\end{aligned}
\end{equation*}

Hence $\theta_t - \frac{\theta_{yy}}{1 + (u_y)^2} > 0$ for $y \in [a,h]$. Since $\theta(y,0) > 0$ for all $y \in [a,h]$, let $\varepsilon = \min_{y \in [a,h]} \theta(y,0) > 0$, $\varphi(y,t) = \varepsilon e^{-\lambda^2 t} \sin (\lambda(y-a))$. Then $\varphi_t = \varphi_{yy}$, $\varphi_{yy} \leq 0$ on $[a,h]$, thus
\begin{align*}
\varphi_t - \frac{\varphi_{yy}}{1+(u_y)^2} = \varphi_{yy} \frac{(u_y)^2}{1+(u_y)^2} \leq 0.
\end{align*}

As a consequence
\begin{equation*}
\begin{aligned}
& \varphi_t - \frac{\varphi_{yy}}{1+(u_y)^2} < \theta_t - \frac{\theta_{yy}}{1+(u_y)^2}, \\
& \varphi (y, 0) \leq \varepsilon \leq \theta(y, 0), \\
& \varphi(a, t) = 0 < \theta(a,t), \quad \varphi(h,t) = 0 < \theta(h, t).
\end{aligned}
\end{equation*}

We apply the classical maximum principle to conclude that $\theta(y, t) \geq \varphi (y,t)$ for all $a \leq y \leq h, 0 \leq t < T$. Therefore
\begin{equation*}
\begin{aligned}
& u_y(y,t) = \tan \theta(y,t) \geq \theta(y,t) \geq \varphi(y,t) = \varepsilon e^{-\lambda^2 t} \sin(\lambda (y-a)), \\
& u(h,t) - u(a,t) = \int_a^h u_y(y,t) dy \geq \int_a^h \varepsilon e^{-\lambda^2 t} \sin(\lambda (y-a)) dy = \frac{2\epsilon}{\lambda} e^{-\lambda^2 t}.
\end{aligned}
\end{equation*}
This implies $u(a,t) \leq R - \frac{2\varepsilon}{\lambda} e^{-\lambda^2 T}$.

\end{proof}

Now we are ready to describe the possible singular behaviors of the rotationally symmetric flows that we are interested in. There is a trichotomy: either the flow remains smooth forever, or it has a neck singularity on the rotational axis, or it shrinks to a singularity on the boundary.

\begin{prop} \label{singular time}
Suppose $f(\cdot, t)$ is a solution to the equation \eqref{mcf equation} with initial condition $f(\cdot, 0) \in \mathcal{G}_R$. Then the flow first becomes singular at time $T$ if and only if $\lim\limits_{t \to T} u(0,t) = 0$ or $\lim\limits_{t \to T} u(0,t) = R$. In addition, if such $T$ doesn't exist, then the mean curvature flow exists for all future time.
\end{prop}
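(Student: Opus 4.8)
The plan is to prove the two directions separately, with essentially all of the work in the implication ``if $T:=T(f(\cdot,0))<\infty$, then $\lim_{t\to T}u(0,t)\in\{0,R\}$''. By Proposition~\ref{long time existence} a finite $T$ forces the curvature of $G_{f(\cdot,t)}$ to blow up as $t\to T$, and by Lemma~\ref{height decreasing} the limit $h:=\lim_{t\to T}f(R,t)$ exists. I would first dispose of the case $h=0$: here Corollary~\ref{height 0 neck R} directly gives $\lim_{t\to T}u(0,t)=R$, one of the two alternatives. The remaining task, and the heart of the argument, is to show that if $h>0$ then $u(0,t)\to 0$.

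For $h>0$ the first step is a curvature estimate away from the rotational axis that is uniform as $t\to T$: for every $\rho\in(0,R)$ there is a constant $C=C(\rho,R,T,u(0,0))$ so that the curvature of $S(G_{f(\cdot,t)})$ is at most $C$ on the portion lying in $\{x\ge\rho\}$, for all $t\in[T/2,T)$. On $\{\rho\le x\le R-\rho\}$ this follows from Lemma~\ref{vertical graph gradient upper bound}, which bounds $u_y$ there in terms of $\rho$ and $T$; that makes the vertical-graph equation $u_t=u_{yy}/(1+u_y^2)-1/u$ uniformly parabolic with smooth bounded coefficients (since $u\ge\rho$), so interior parabolic regularity bounds $u_{yy}$ and hence the profile curvature, while the rotational principal curvature is at most $1/\rho$. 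Near the free boundary $\{x=R\}$, the hypothesis $h>0$ together with Lemma~\ref{vertical graph gradient lower bound} keeps the boundary circle at a definite height and keeps the curve away from $\{x=R\}$ below any fixed height, so there the flow is a bounded graphical piece that meets $\{x=R\}$ orthogonally and is confined between static planes; Stahl's boundary regularity estimates \cite{stahl1996regularity} then supply the curvature bound near the boundary as well, uniformly up to $T$.

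Granting this, suppose toward a contradiction that $\limsup_{t\to T}u(0,t)=2\rho_0>0$, and choose $t_i\uparrow T$ with $u(0,t_i)>\rho_0$. Since $u(\cdot,t_i)$ is increasing, the entire surface $S(G_{f(\cdot,t_i)})$ lies in $\{x>\rho_0\}$, so its curvature is at most $C(\rho_0)$, a bound independent of $i$. By the interior estimates of Ecker--Huisken \cite{EckerHuisken91_interior}, together with their Neumann analogue near $\{x=R\}$ from \cite{stahl1996regularity}, such a curvature bound at one time persists for a definite further time: there is $\tau=\tau(C(\rho_0),R)>0$ with curvature at most $2C(\rho_0)$ on $[t_i,t_i+\tau]$. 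Choosing $i$ with $t_i+\tau>T$ then bounds the curvature on all of $[t_i,T)$, contradicting the blow-up at $T$. Hence $\limsup_{t\to T}u(0,t)=0$, i.e. $u(0,t)\to 0$, which finishes the case $h>0$. For the converse, if the flow is smooth on $[0,\bar T)$ with $\bar T<\infty$ and $\lim_{t\to\bar T}u(0,t)$ equals $0$ or $R$, then either the rotational principal curvature $1/u(0,t)$ at the neck point blows up, or the curve is squeezed into the strip $\{u(0,t)\le x\le R\}$, of width $R-u(0,t)\to 0$, while its tangent turns from vertical at the neck to horizontal at the orthogonal free boundary; an elementary estimate shows the latter also forces the curvature to blow up. In either case $\bar T$ is singular, so $T(f(\cdot,0))\le\bar T$, and the forward implication applied at $T(f(\cdot,0))$ gives $T(f(\cdot,0))=\bar T$. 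Finally, the ``in addition'' clause is the contrapositive of the forward implication.

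The step I expect to be the main obstacle is the uniform curvature estimate of the second paragraph near the free boundary when $h>0$: Lemma~\ref{vertical graph gradient upper bound} degenerates as $x\to R$ (its constant $\sigma(\delta)\to\infty$, as noted in the remark following that lemma), so one must genuinely invoke the free-boundary Neumann regularity theory and the confinement by planar barriers rather than the interior techniques adapted from \cite{AAG}. A secondary delicate point is that the contradiction argument yields only $\limsup_{t\to T}u(0,t)=0$ through persistence of curvature bounds, not through any monotonicity of the neck radius, which I would not expect to hold in general.
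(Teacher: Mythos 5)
Your decomposition into $h=0$ and $h>0$ and your reliance on a uniform curvature estimate plus a persistence-of-curvature-bound argument is a genuinely different route from the paper. The paper's proof of this proposition does not establish curvature estimates on $\{x\ge\rho\}$ at all; instead, from the contradiction hypothesis that $\lim_{t\to T}u(0,t)$ is neither $0$ nor $R$, it extracts a single uniform two-sided bound $\e_1<u(0,t)<R-\e_1$ \emph{valid for all $t\in[0,T)$}. With this two-sided bound in hand, the vertical-graph PDE for $u$ on $[0,a]\times[0,T]$ has a $1/u$ coefficient bounded uniformly in both $y$ and $t$, so the parabolic regularity quoted from Ladyzhenskaya--Solonnikov--Ural'tseva immediately upgrades the $u_y$ bound of Lemma~\ref{vertical graph gradient upper bound} to full smoothness up to time $T$, while the horizontal graph $v$ on $[R-\e_1,R]$ is controlled by the Evans--Spruck estimates; no singularity appears, and the contradiction is reached directly, without ever invoking persistence of curvature bounds.

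The gap in your argument is exactly the point you flag as ``secondary'' at the end, and it is not secondary: your contradiction only gives you $u(0,t_i)>\rho_0$ along a \emph{sequence} $t_i\uparrow T$, not a lower bound uniform in $t$. But the interior parabolic estimate for $u_{yy}$ at the neck point at time $t_i$ requires the $1/u$ coefficient to be bounded on a \emph{backward} parabolic cylinder $[0,r]\times(t_i-r^2,t_i]$ of fixed size $r$; this needs $u(0,s)\ge\rho_0/2$, say, for $s$ in a fixed interval below $t_i$, which your hypotheses do not supply. If $\liminf_{t\to T}u(0,t)=0$ while $\limsup_{t\to T}u(0,t)=2\rho_0>0$, the coefficient $1/u(0,s)$ is unbounded on any backward cylinder touching $t_i$, and the claimed uniform curvature bound $C(\rho_0)$ on $\{x\ge\rho_0\}$ at time $t_i$ does not follow from the parabolic regularity argument you sketched. (Your near-boundary estimate is fine: $h>0$ plus Lemma~\ref{vertical graph gradient lower bound} does give $u(0,t)\le R-c_h$ for \emph{all} $t$, so $v$ is defined on a fixed interval $[R-c_h,R]$ and the Neumann/Evans--Spruck machinery applies.) Ruling out such oscillation of the neck radius is precisely what the paper's auxiliary subsequence argument for the bound $u(0,t)>\e_1$ is doing; without it, or without an explicit appeal to the Altschuler--Angenent--Giga structure theory for rotationally symmetric necks, your Step~2 estimate near the axis is unjustified. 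If you first prove the two-sided bound on $u(0,t)$ (as the paper does), your curvature-estimate-plus-persistence mechanism goes through and gives an alternative conclusion to the proof, but as written the proposal skips the step that makes the near-neck estimate uniform.
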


\begin{proof}

It is clear that if $\lim\limits_{t \to T} u(0,t) = 0$, then a neck pinch singularity appears at the origin, and if $\lim\limits_{t \to T} u(0,t) = R$, a singularity appears at the boundary. Now we assume neither of the above happens, and we want to show that $T$ is not a singular time. 

We prove by contradiction and assume that a singularity appears at time $T$. By Corollary \ref{height 0 neck R}, $h > 0$. We claim that there exists $\e_1 > 0$ such that $u(0,t) > \e_1$ for all $0 \leq t < T$. Otherwise there exists a sequence $\{t_i\}$ in $[0,T)$ such that $\lim\limits_{i \to \infty} u(0,t_i) = 0$. Up to extracting from a subsequence, we can assume $t_i$ converges to $T' \in [0, T]$. Since $u(0, T) \neq 0$, $T' < T$, and a singularity appears at the origin at time $T'$, which is a contradiction.

By a similar argument, we can also assume that $u(0,t) < R -\e_1$ for all $0 \leq t < T$.

For any $0 < a < h$, by Lemma \ref{vertical graph gradient lower bound}, we know $u(a, t) < R - \frac{2\varepsilon}{\lambda} e^{-\lambda^2 T}$. Let $\e = \min \{\e_1, \frac{2\varepsilon}{\lambda} e^{-\lambda^2 T}\}$, we know $\e < u(0,t) < u(a,t) < R - \e$. 

We claim that $u(y,t)$ is smooth at $(y,t) \in [0, a] \times [0,T]$. This follows from $u_y(0,t) = 0$, a priori estimate for $u_y$ in Lemma \ref{vertical graph gradient upper bound}, and hence (see \cite{ladyzhenskaia1968linear}) for all higher derivatives of $u$ in the interior. Therefore the singularity can only appear on the boundary, i.e. at $(R,h)$.

Now consider the horizontal graph function $v(x,t)$, which is defined for $R - \e_1 \leq x \leq R$, $0 < t < T$, and it is uniformly bounded by the height of the initial condition. This function is a solution of the horizontal graph equation, so the Evans-Spruck estimates (\cite[Corollary 5.3]{EvansSpruck92_III}, also see \cite[Page 303]{AAG}) imply that $\nabla v$ as well as all higher space derivatives of $v$ are uniformly bounded on the region $\{(x,t): R- \e_1/2 \leq x \leq R,\ T/2 < t < T\}$. Hence $v(x,t) \to v(x,T)$ uniformly in $R - \e_1/2 \leq x \leq R$ as $t \nearrow T$. We have also shown that $v_t(r,t)$ is uniformly bounded for $R- \e_1/2 \leq x \leq R$, $T/2 < t < T$, hence $(R,h) = (R, v(R,T))$ cannot be a singular point.

%By adding two caps to the cylinder $C_R$, we can close the boundary without changing the mean curvature flow (since the height is uniformly bounded). We can view this mean curvature flow as a curve shortening flow under the metric $g_{\text{rot}}$. The boundary is strictly convex under $g_{\text{rot}}$, at least at the part where the boundary of the curve shortening flow lies, because the cylinder shrinks under the mean curvature flow. Therefore, we can apply \cite[Theorem 1.4]{ko2023existence} to conclude that the singularity on the boundary is circular, which can not happen when $h > 0$.

\end{proof}

In the following proposition, we show that the appearance of the neck singularity is an open condition.

\begin{prop} \label{left open}

Let $\mathscr L$ denote the set of function $f_0 \in \mathcal{G}_R$ such that the solution $f(\cdot, t)$ to the equation \eqref{mcf equation} with initial condition $f(\cdot, 0) = f_0$ becomes singular in finite time, and $\lim\limits_{t \to T} u(0,t) = 0$ for some $T>0$, then $\mathscr L$ is an open set in $\mathcal{G}_R$ with respect to the $C^1$ norm.

\end{prop}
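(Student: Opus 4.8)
The plan is to show openness by using the neck pinch as a quantitative, "triggered" event that persists under $C^1$-small perturbations of the initial data, via the comparison principle (Proposition~\ref{comparison}) together with continuous dependence (Proposition~\ref{continuous dependence}) and the trichotomy in Proposition~\ref{singular time}. Let $f_0\in\mathscr L$, so the flow $f(\cdot,t)$ starting from $f_0$ satisfies $u(0,t)\to 0$ as $t\to T=T(f_0)<\infty$. The key point is that the neck point $u(0,t)$ hitting $0$ is detected at a \emph{finite} time in the following strong sense: pick a time $t_1\in(0,T)$ at which $u(0,t_1)$ is already very small, smaller than $11/10$ — that is, at time $t_1$ the profile curve $G_{f(\cdot,t_1)}$ passes below/inside the innermost point of the Angenent torus $\mathcal A$ near the axis. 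Since $\mathcal A$ is a self-shrinker that shrinks to the origin in finite time $T'$, and the flow at time $t_1$ is "caught inside" a suitably placed (possibly rescaled) copy of $\mathcal A$ in the region near the axis, the avoidance principle for mean curvature flow forces $u(0,\cdot)$ to reach $0$ by time $t_1+T'$. This is the standard Angenent-torus argument for forcing a neck pinch.

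The steps, in order: (1) Fix $f_0\in\mathscr L$ and use Proposition~\ref{singular time} to know $T=T(f_0)<\infty$ and $\lim_{t\to T}u(0,t)=0$; choose $t_1\in(0,T)$ with $u(0,t_1)$ small enough that a rescaled Angenent torus $\mathcal A_\mu := \mu\mathcal A$ (with $\mu$ small) can be placed with its profile curve lying strictly inside the region near the axis enclosed by $G_{f(\cdot,t_1)}$ and its reflection, i.e. $S(G_{f(\cdot,t_1)})$ strictly encloses $\mathcal A_\mu$ in a neighbourhood of the axis. (2) By continuous dependence (Proposition~\ref{continuous dependence}), the map $f_0\mapsto f(\cdot,t_1)$ is continuous (indeed Lipschitz) from $\mathcal G_R$ into $\mathcal F_R$, and $\mathcal D$ is open; hence for all $g_0$ in a $C^1$-neighbourhood $\mathcal U$ of $f_0$ in $\mathcal G_R$, the flow from $g_0$ still exists up to time $t_1$ and its profile $g(\cdot,t_1)$ is $C^1$-close to $f(\cdot,t_1)$, so $S(G_{g(\cdot,t_1)})$ also strictly encloses the same $\mathcal A_\mu$ near the axis. (3) Apply the avoidance principle (comparison of disjoint flows): since $\mathcal A_\mu$ shrinks to the origin by its extinction time, and $\mathcal A_\mu$ must stay inside the region bounded by $S(G_{g(\cdot,t)})$ for as long as that flow is smooth and the two stay on the correct sides, the neck of the $g$-flow must be pinched off — concretely, $u^g(0,t)$ must reach $0$ at some finite time $T(g_0)\leq t_1 + (\text{extinction time of }\mathcal A_\mu)$. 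Use the Sturmian theorem / intersection-counting as in Proposition~\ref{singular time} (or directly Corollary~\ref{height 0 neck R} once one knows the flow is not eternal) to conclude the singularity is of neck type: $\lim_{t\to T(g_0)}u^g(0,t)=0$, i.e. $g_0\in\mathscr L$. (4) Therefore $\mathcal U\subset\mathscr L$, proving $\mathscr L$ is open.

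There is one technical subtlety I would be careful about: in step (3), one needs the comparison to be with a surface that is genuinely \emph{inside} the enclosed region and disjoint from the boundary behavior, so that the free boundary on $\{x=R\}$ plays no role — this is automatic because the Angenent torus (rescaled small, suitably translated if needed) can be taken with profile curve compactly contained in $\{x<R\}$ near the axis, away from $\partial C_R$, so the ordinary (boundaryless) avoidance principle applies inside $C_R$. Also one should check that the relevant inclusion ``$S(G_{g(\cdot,t_1)})$ encloses $\mathcal A_\mu$'' is $C^0$-open, which it is, and that $C^1$-closeness of $g(\cdot,t_1)$ to $f(\cdot,t_1)$ (guaranteed by continuous dependence into $\mathcal F_R$) certainly implies the required $C^0$ inclusion.

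The main obstacle I anticipate is \textbf{making the enclosure argument precise for these rotationally symmetric free-boundary surfaces}: one must verify that at the chosen time $t_1$, the profile curve of the flow really does enclose a fixed rescaled Angenent torus in a neighborhood of the axis — which uses that $u(0,t_1)$ is small \emph{and} that $f(\cdot,t_1)$ stays above the torus profile out to the radius $\sim\alpha\mu$ where the torus profile lives (this needs a lower bound on $f$ away from the axis at time $t_1$, e.g. that the height $f(R,t_1)>0$, which holds since $T<\infty$ and Corollary~\ref{height 0 neck R} would otherwise be violated, or more simply from Lemma~\ref{height decreasing} / the structure of $\mathcal G_R$). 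Once that geometric picture is pinned down, the rest is a routine combination of continuous dependence and the avoidance principle. An alternative to using the Angenent torus here is to compare with a shrinking sphere or a shrinking cylinder-type barrier placed near the neck; the Angenent torus is the cleanest because it respects the rotational symmetry and the topology of the neck.
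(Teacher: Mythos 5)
Your overall strategy coincides with the paper's: pick a finite time $t_1$ close to the singular time, use continuous dependence (Proposition~\ref{continuous dependence}) to see that $C^1$-close initial data give a flow that at time $t_1$ is geometrically close to the original one, and then trap it with an Angenent torus to force a finite-time singularity on the axis, finishing with the trichotomy of Proposition~\ref{singular time}. So the skeleton is right, and your reading of how the torus barrier forces $u(0,t)\to 0$ (hence rules out the boundary alternative $u(0,t)\to R$) is also correct.

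The genuine gap is exactly the one you flag as the ``main obstacle,'' and your proposed remedy does not close it. Knowing only that $u(0,t_1)$ is small does \emph{not} let you place a rescaled torus $\mu\mathcal A$ disjoint from and linked through $G_{f(\cdot,t_1)}$. What you need is the two-sided condition $u(0,t_1)<\tfrac{11}{10}\mu$ \emph{and} $f\!\left(\tfrac{11}{10}\mu,\,t_1\right)>\alpha\mu$, i.e.\ the profile must climb above the entire rescaled torus by the time $x$ reaches the torus's inner radius. The lower bound $f(R,t_1)>0$ you invoke is at the wrong location — it gives no control on $f$ at scale $\mu\sim u(0,t_1)$, and nothing in the abstract setup rules out a ``wedge''-shaped neck where $f(x,t_1)$ grows only linearly in $x-u(0,t_1)$, in which case no admissible $\mu$ exists. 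What rescues the argument — and what the paper uses — is the structure theorem of Altschuler--Angenent--Giga: the singularity at the axis is a neck pinch, so $(T-t)^{-1/2}\Gamma_t\to S^1(\sqrt 2)\times\R$ smoothly in $B_{10\alpha}$. This says precisely that at a suitable $t_0$ the profile is $C^1$-close to a vertical line segment of length $\gg$ its $x$-coordinate (in rescaled coordinates, vertical of length $10\alpha$ at $x=\sqrt 2$), which is exactly the quantitative steepness you need; continuous dependence then transfers this picture to the perturbed flow, and a \emph{fixed}-scale Angenent torus can be placed around the near-cylinder. Without citing this asymptotic your proof does not go through. A second, smaller point: your appeal to Corollary~\ref{height 0 neck R} to conclude the singularity is of neck type is misdirected — that corollary gives the opposite implication ($h=0\Rightarrow u(0,t)\to R$). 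The clean way to finish is as you also sketch: the torus barrier gives $u(0,t)<r(t)\to 0$ with $r(t)<\tfrac{11}{10}\mu<R$, so by the trichotomy the flow can neither be immortal nor have $u(0,t)\to R$, hence $u(0,t)\to 0$ at some finite time.
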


\begin{proof}
Suppose $f_0\in \mathscr L$, and becomes singular as $t\to T$. It suffices to show that there exists $\epsilon>0$, such that for any $\hat f_0$ with $\|\hat{f}_0-f_0\|_{C^1}\leq \epsilon$, $\hat{f}_0\in \mathscr L$ as well.

Because $\lim\limits_{t \to T} u(0,t) = 0$, the mean curvature flow $\Gamma_t=S(G_{f(\cdot,t)})$ has a singularity at the origin. By \cite{AAG}, this is a neck singularity, i.e. $(T-t)^{-1/2}\Gamma_t$ converges to the cylinder $S^1(\sqrt{2})\times\R$ smoothly in $B_r$, for any $r>0$, as $t\to T$. In particular, there exists $t_0<T$, such that $(T-t_0)^{-1/2}\Gamma_{t_0}$ has distance at most $1/2$ away from $S^1(\sqrt{2})\times\R$, inside the ball $B_{10\alpha}$.

By the continuity of the mean curvature flow with respect to the initial data, for any $\delta>0$, there exists $\epsilon>0$, such that whenever $\|\hat{f}_0-f_0\|_{C^1}\leq \epsilon$, suppose $\hat f(\cdot, t)$ is the solution to the equation \eqref{mcf equation} with initial condition $\hat f(\cdot, 0) = \hat f_0$, $(T-t_0)^{-1/2}\hat \Gamma_{t_0}=(T-t_0)^{-1/2}S(G_{\hat f(\cdot,t_0)})$ has distance at most $1/2$ away from $(T-t_0)^{-1/2}\Gamma_{t_0}$. In particular, $(T-t_0)^{-1/2}\hat \Gamma_{t_0}$ has distance at most $1$ away from $S^1(\sqrt{2})\times\R$, inside the ball $B_{10\alpha}$.

Using an Angenent torus as a barrier, we see that $\hat \Gamma_{t}$ has a finite time singularity, which must occur on the rotational axis. This implies that $\hat{f}_0\in \mathscr L$.
\end{proof}

\subsection{Interpolation family of surfaces} \label{interpolation surfaces} \

We consider a family of initial data. Let 
\begin{equation*} 
    \rho_{\delta} = \left\{(\delta,y) \left| y \in \left[0,\frac{\alpha}{\delta}\right]\right.\right\} \cup \left\{\left.\left(x, \frac{\alpha}{\delta
}\right) \right| x \in [\delta, \infty)\right\},\ 
\delta \in (0, \infty),
\end{equation*} 
and let
\begin{equation} \label{initial curve definition}
    \rho_{\delta,R} = \rho_{\delta} \cap \{(x,y) | x \leq R\}
\end{equation}
for $R > 0$. For any $0 < \delta_1 < \delta_2 < R$, $\rho_{\delta_1, R}$ is on top of $\rho_{\delta_2, R}$, and they form a (singular) foliation.

\begin{rmk} 
For $\delta \in (0, R)$, even though our initial data $\rho_{\delta, R}$ are not contained in the set of graphs of functions in $\mathcal{G}_R$, they are locally Lipschitz. In fact, in \cite{stahl1996regularity}, Stahl obtained a local $C^1$ estimate of the free boundary mean curvature flow, see \cite[Remark 6.14]{stahl1996regularity}. Therefore, one can use an approximation argument to show that there exists a family of functions $f(t) \in \mathcal{G}_R, t \in (0, \e)$, for small $\e > 0$, such that $S(G_{f(t)})$ evolves by its mean curvature, and converges to $S(\rho_{\delta, R})$ as $t \to 0$. Such an approximation argument has been used by Ecker-Huisken in \cite[Theorem 4.2]{EckerHuisken91_interior} and we refer the readers to the discussions before \cite[Theorem 4.2]{EckerHuisken91_interior}. Therefore we can apply all arguments above to solutions with initial data $\rho_{\delta, R}$.
\end{rmk}

Let $f_{\delta}(\cdot, t)$ be the family of solutions to the equation \eqref{mcf equation} with initial data $\rho_{\delta, R}$. Denote the vertical graph function and the horizontal graph function of $f_{\delta}(\cdot, t)$ by $u_{\delta}(\cdot, t), v_{\delta}(\cdot, t)$. By Proposition \ref{long time existence}, we can write the first singular time of $f_{\delta}(\cdot, t)$ as $T(\rho_{\delta, R})$.

We need the following Lemmas to study the flows with a finite time singularity, and the flow that exists for all future time.

\begin{lem} \label{away from boundary}
Given $\delta > 1$, $R > 2 \alpha$. If $\lim\limits_{t \to T} u_{\delta}(0,t) = 0$, then $u_{\delta}(0,t) \leq \alpha  < R$ for all $t \in [0,T(\rho_{\delta, R}))$.

\end{lem}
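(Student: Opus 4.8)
The plan is to use the Angenent torus $\mathcal{A}$ (the self-shrinker with closest point $(11/10,0)$, confined in the rectangle $1<x<\alpha$, $0\le y<\alpha$) as a barrier to control the location of the neck point. The key observation is that $\delta>1$ and $R>2\alpha$ are chosen precisely so that the initial curve $\rho_{\delta,R}$ starts strictly to the right of $\mathcal{A}$ but, crucially, the neck singularity at the origin forces the vertical graph to sweep past $x=11/10$, so a Sturmian-type count must be invoked to locate where it crosses $x=\alpha$.

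First I would set up the comparison. The profile curve $G_{f_\delta(\cdot,t)}$ is the graph of $f_\delta$, with vertical graph function $u_\delta(\cdot,t)$ defined on $[0,f_\delta(R,t)]$. Because $f_\delta(\cdot,0)=\rho_{\delta,R}$ has its vertical piece along $x=\delta>1$, and since $1<\delta$ while the Angenent torus profile lies in $1<x<\alpha$, the initial vertical segment $\{x=\delta\}$ either lies entirely to the right of the relevant branch of $\mathcal{A}$ (if $\delta\ge\alpha$) or crosses it; in either case the number of intersection points of $U_\delta(0)$ (the graph of $u_\delta(\cdot,0)$ together with its reflection) with the profile of $\mathcal{A}$ is finite and can be counted explicitly. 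Then I would run both flows: $\Gamma_t=S(G_{f_\delta(\cdot,t)})$ and the shrinking Angenent torus $\mathcal{A}(t)$, which disappears at time $T'$. The hypothesis $\lim_{t\to T}u_\delta(0,t)=0$ means the neck pinches at the origin; by the analysis of \cite{AAG} recalled in the proof of Proposition \ref{left open}, this is a neck singularity with the cylinder as blowup, so in particular $T<T'$ is \emph{not} automatic — one must argue that the Angenent torus cannot be swallowed. The cleaner route: the flow $\Gamma_t$ develops its singularity on the axis, so at the singular time the neck point $u_\delta(0,t)\to 0$, meaning $\Gamma_t$ stays to the left of where $\mathcal{A}(t)$ would be near the axis; but $\mathcal{A}$ stays at $x>1$ for all its lifetime, forcing $\Gamma_t$ to cross $\mathcal{A}(t)$. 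By the Sturmian theorem (applied to $u_\delta,\tilde u$ and $v_\delta,\tilde v$ for the Angenent profile), the intersection count with $\mathcal{A}(t)$ is nonincreasing except when new intersections form on $\partial C_R$; since $f_\delta$ meets $\{x=R\}$ orthogonally at height $f_\delta(R,t)$ and the Angenent torus lies in $x<\alpha<R$, no new intersections can form on the boundary, so the count stays bounded.

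The main step is to convert the bounded-intersection-count information into the pointwise bound $u_\delta(0,t)\le\alpha$. Here is the mechanism I expect to use: consider the horizontal line $\{y=c\}$ for $c$ slightly less than the initial height $\alpha/\delta$, no — rather, I would compare with a \emph{translated} Angenent torus or directly argue as follows. Suppose for contradiction that $u_\delta(0,t_1)>\alpha$ for some $t_1<T$. Since $u_\delta(0,0)=\delta$ and (by the remark after Lemma \ref{height decreasing}) we have continuity, and since $u_\delta(0,\cdot)\to 0$ as $t\to T$, the function $t\mapsto u_\delta(0,t)$ takes every value in $(0,\max)$; I want to preclude it ever exceeding $\alpha$. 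The point is that the neck point can only move \emph{inward} (toward the axis) in a controlled way: if at some moment the neck point is at $x>\alpha$, then at that moment the whole vertical branch of $\Gamma_t$ near the axis sits to the right of $\mathcal{A}$, while the Angenent torus, being compact and self-shrinking, will shrink to a point before time $T'$; if $\Gamma_t$ is to the right of $\mathcal{A}$ at time $t$ and develops its axis singularity \emph{after} $T'$, then at the moment $\mathcal{A}(t)$ disappears, $\Gamma_t$ has swept past the origin region — but by the comparison/avoidance principle, two mean curvature flows that are initially disjoint (one inside the other) stay disjoint, so $\Gamma_t$ inside $\mathcal{A}(t)$ would be forced to vanish no later than $T'$, contradicting the neck pinch at the origin occurring at $T$. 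This gives an inequality $T\le T'$ or a containment that caps $u_\delta(0,t)$ by the corresponding extent of $\mathcal{A}(t)$, which is $<\alpha$.

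The hard part, and the step I would spend the most care on, is the precise disjointness/containment setup at $t=0$: one must check that $\rho_{\delta,R}$ with $\delta>1$ and $R>2\alpha$ is genuinely positioned so that the rotational surface $S(\rho_{\delta,R})$ and the Angenent torus $\mathcal{A}$ are disjoint in the right way (the Angenent torus is a closed surface; $S(\rho_{\delta,R})$ is two planes-with-holes joined by a neck of radius $\delta$), and then track which flow is ``inside'' which. Since $\delta>1$ places the neck strictly outside the $x=11/10$ core of $\mathcal{A}$, and the horizontal sheets of $S(\rho_{\delta,R})$ are at height $\alpha/\delta<\alpha$ when $\delta>1$ — wait, $\alpha/\delta<\alpha$ holds, and $\mathcal{A}$ has $y<\alpha$, so the sheets may or may not pass over the torus — so the correct statement is that $S(\rho_{\delta,R})$ separates $\mathcal{A}$ from the axis, i.e. $\mathcal{A}$ lies in the bounded component cut off by the neck. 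Then avoidance gives that $\Gamma_t$ stays outside $\mathcal{A}(t)$ for all $t<\min\{T,T'\}$, the neck $u_\delta(0,t)$ stays $\ge$ the inner extent of $\mathcal{A}(t)>1$, so the neck cannot pinch before $T'$; hence $T\ge T'$, and since at $t$ just below $T'$ the surface $\Gamma_t$ is squeezed against the collapsing $\mathcal{A}(t)$, a further (standard) argument with an intermediate barrier — e.g. a slightly larger Angenent torus, or the original $\mathcal{A}$ translated — pins $u_\delta(0,t)\le\alpha$ for all $t$, completing the proof.
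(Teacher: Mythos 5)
Your proposal uses the Angenent torus $\mathcal{A}$ as the barrier, whereas the paper uses a \emph{static catenoid}, and this choice is fatal. The Angenent torus is a self-shrinker that disappears at a finite time $T'$, so an avoidance argument started at any time $t_1$ only controls $\Gamma_t$ for $t\in[t_1,t_1+T')$. What disjointness actually gives you during this window is that the neck point $u_\delta(0,t)$ cannot enter the shrinking torus, hence must stay above its outer extent $\lambda(t)\alpha$, where $\lambda(t)\to 0$ as $t\to t_1+T'$. This allows the neck to become arbitrarily small within the window and then the barrier is gone; it yields no lasting constraint and no contradiction with $\lim_{t\to T}u_\delta(0,t)=0$. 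The sentence ``$\Gamma_t$ inside $\mathcal{A}(t)$ would be forced to vanish no later than $T'$'' also does not apply: $\Gamma_t$ is a large noncompact (free boundary) surface and is never enclosed by the compact torus, so there is no containment to invoke. The final step, ``a further (standard) argument with an intermediate barrier,'' is precisely where the actual proof must live, and no such argument is standard; you would need an infinite-lifetime barrier to pin the neck down for \emph{all} later times.

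The paper accomplishes exactly this with a catenoid of neck radius $\alpha+\e$. The hypotheses are used as follows: $R>2\alpha$ ensures the catenoid exits $\partial C_R$ at a height $>\alpha$, while $\delta>1$ forces the initial (hence all later) heights $f_\delta(R,t)\leq \alpha/\delta<\alpha$, so the boundary values of the flow lie strictly below the catenoid at $x=R$ and the Sturmian count cannot increase. If ever $u_\delta(0,t_1)>\alpha+\e$, then at time $t_1$ the catenoid and the profile curve have zero intersections, whence (the catenoid being static) the catenoid lies on top of $G_{f_\delta(\cdot,t)}$ for all $t\geq t_1$, which pins $u_\delta(0,t)\geq\alpha+\e$ forever --- contradicting $u_\delta(0,t)\to 0$. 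The persistence of the barrier is the essential point that your torus-based sketch cannot supply.
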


\begin{proof}

Since $\delta > 1$, by Proposition \ref{comparison}, the horizontal line $y = \alpha$ is always on top of $G_{f_{\delta}(\cdot, t)}$.

We prove this lemma by contradiction. Suppose $u_{\delta}(0,t_1) > \alpha$ for some $t_1 \in [0,T(\rho_{\delta, R}))$. We choose $\e$ small so that $0 < \e < \min\{u_{\delta}(0,t_1) - \alpha, \frac{\alpha}{100}\}$. Consider the curve $\mathscr{C}: \{ (x,y) | x = (\alpha + \e)\cosh (\frac{y}{\alpha + \e}), y \geq 0 \}$ (the upper half profile curve of the catenoid with neck point $(\alpha + \e, 0)$). $\mathscr{C}$ intersect the boundary $x = R$ at $(R, (\alpha + \e)\ln (\frac{R}{\alpha + \e} + \sqrt{\frac{R^2 - (\alpha + \e)^2}{(\alpha + \e)^2}}))$. Since $R > 2\alpha$, and $\e < \frac{\alpha}{100}$, then $(\alpha + \e)\ln (\frac{R}{\alpha + \e} + \sqrt{\frac{R^2 - (\alpha + \e)^2}{(\alpha + \e)^2}}) > \alpha$. Thus $\mathscr{C}$ has exactly one intersection point with $\rho_{\delta, R}$.

By Sturmian Theorem, any new intersection point between $\mathscr{C}$ and $G_{f_{\delta}(x,t)}$ can only appear on the boundary. But we know for $t \in [0, T(\rho_{\delta, R}))$, $f_{\delta}(R,t) \leq \alpha < (\alpha + \e)\ln (\frac{R}{\alpha + \e} + \sqrt{\frac{R^2 - (\alpha + \e)^2}{(\alpha + \e)^2}})$, thus there will be no new intersection point appearing on the boundary.

Because $u_{\delta}(0, t_1) > \alpha + \e$, $f_{\delta}(R,t_1) < (\alpha + \e)\ln (\frac{R}{\alpha + \e} + \sqrt{\frac{R^2 - (\alpha + \e)^2}{(\alpha + \e)^2}})$, $G_{f_{\delta}(x,t_1)}$ and $\mathscr{C}$ can not intersect, otherwise they have at least two intersection points. Then $\mathscr{C}$ is on top of $G_{f_{\delta}(x,t_1)}$, which implies $u_{\delta}(0,t) \geq \alpha + \e$ for all $t \geq t_1$. This is a contradiction to $\lim\limits_{t \to T} u_{\delta}(0,t) = 0$.

\end{proof}

\begin{remark}
    In fact, using the parabolic Sturmian theorem to compare the flow with catenoids, one can show that the neck point moves monotonically to the origin after a sufficiently long time. We do not need this fact and hence we omit the proof here.
\end{remark}

We are now ready to proceed with the proof for the free boundary version of our main theorem.

\begin{thm} \label{free boundary immortal flow}
Given $R > 2 \alpha$, there exists an immortal rotationally symmetric free boundary mean curvature flow of surfaces in $C_R$, which is not a static plane.
\end{thm}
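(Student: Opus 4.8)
\emph{Setup.} The plan is to run an interpolation argument over the family of initial curves $\{\rho_{\delta,R}\}_{\delta\in(0,R)}$ from Section \ref{interpolation surfaces}, using Proposition \ref{singular time}. By the Remark in that subsection each $\rho_{\delta,R}$ may be regarded as an initial datum whose flow $f_\delta$ lies in $\mathcal G_R$ for $t>0$, so Proposition \ref{singular time} applies and yields a trichotomy: $f_\delta$ either (i) neck-pinches on the axis at a finite time ($\lim_{t\to T}u_\delta(0,t)=0$), or (ii) shrinks to the boundary at a finite time ($\lim_{t\to T}u_\delta(0,t)=R$), or (iii) is immortal. Write $\mathscr L,\mathscr R,\mathscr I\subseteq(0,R)$ for the sets of $\delta$ falling into cases (i), (ii), (iii). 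It suffices to produce $\delta\in\mathscr I$: the resulting flow is immortal, rotationally symmetric and free boundary, and it is not a static plane because it issues from the non-constant curve $\rho_{\delta,R}$.

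\emph{The two endpoints.} First I would pin down the behaviour at the extremes of $\delta$. For $\delta\in(0,1)$ one has $\alpha/\delta>\alpha$, so the surface $S(\rho_{\delta,R})$ — whose profile is the vertical segment $\{x=\delta,\ |y|\le\alpha/\delta\}$ together with the two horizontal segments at height $\pm\alpha/\delta$ — is disjoint from the Angenent torus $\mathcal A$ (confined to $\{1<x<\alpha,\ |y|<\alpha\}$), while the cylindrical neck $\{x=\delta\}$ threads through the hole of $\mathcal A$. By the avoidance principle the two flows stay disjoint, hence linked, for all time; since $\mathcal A(t)$ is a self-shrinker collapsing to the origin at time $T'$, the neck is squeezed to zero before $T'$, and it cannot touch $\{x=R\}$ meanwhile because $u_\delta(0,t)$ remains below the (shrinking) inner radius of $\mathcal A(t)$. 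Thus $(0,1)\subseteq\mathscr L$. At the opposite extreme, for $\delta$ close to $R$ I would compare with a static catenoid exactly as in the proof of Lemma \ref{away from boundary}, with neck radius $c$ slightly larger than $\alpha$: because $R>2\alpha$ the catenoid's profile arc in $C_R$ is disjoint from $G_{\rho_{\delta,R}}$ and reaches $\{x=R\}$ at a height exceeding $c>\alpha\ge\alpha/\delta\ge f_\delta(R,t)$, so the parabolic Sturmian theorem forbids any intersection being created on $\{x=R\}$, the intersection count stays $0$, and $u_\delta(0,t)\ge c>0$ for all time. In particular such a flow never neck-pinches on the axis, so $\delta\notin\mathscr L$ when $\delta$ is near $R$.

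\emph{Interpolation.} Now set $a:=\sup\{\delta\in(0,R):(0,\delta)\subseteq\mathscr L\}$, so $(0,a)\subseteq\mathscr L$; the two endpoint computations give $a\ge1$ and $a<R$. The subset of $(0,R)$ of type (i) is open: Proposition \ref{left open} gives openness in $\mathcal G_R$, and $\delta\mapsto f_\delta(\cdot,t_\ast)\in\mathcal G_R$ is continuous for a fixed small $t_\ast>0$ by continuous dependence (Proposition \ref{continuous dependence}) together with the regularization of Section \ref{interpolation surfaces}. Hence $a\notin\mathscr L$. If $a\in\mathscr I$ we are done. Otherwise $a\in\mathscr R$, so the $a$-flow shrinks to the boundary at some finite $T_a$; choosing $t_0<T_a$ close to $T_a$, the profile $G_{f_a(\cdot,t_0)}$ lies in $\{x>R-\e_0\}$ with $\e_0$ as small as we please and has height at most $\alpha/a\le\alpha$. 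By continuous dependence, for $\delta<a$ sufficiently close to $a$ the flow $f_\delta$ survives to $t_0$ and $G_{f_\delta(\cdot,t_0)}$ lies in $\{x>R-2\e_0\}$ with height only slightly larger than $\alpha$; a static catenoid with neck radius slightly larger than $\alpha$ is then disjoint from $G_{f_\delta(\cdot,t_0)}$ and reaches $\{x=R\}$ above that height (both using $R>2\alpha$), so the Sturmian theorem applied from $t_0$ onward keeps $u_\delta(0,t)$ bounded below. Thus $f_\delta$ does not pinch on the axis, contradicting $\delta\in(0,a)\subseteq\mathscr L$. Therefore $a\notin\mathscr R$, so $a\in\mathscr I$, and $f_a$ is the desired immortal, non-planar free-boundary flow.

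\emph{Main obstacle.} The substance of the proof is in the two barrier arguments. For the Angenent torus one must verify carefully that avoidance plus the preserved linking genuinely force the axial pinch to occur in finite time, and that no competing boundary singularity can intervene first — this is the AAG-type use of a compact self-shrinker as a barrier, but the free-boundary setting needs attention. For the catenoids one must check that the chosen neck radii make the relevant configurations honestly disjoint (this is exactly where $R>2\alpha$ is used) and that the parabolic Sturmian theorem then really yields the claimed lower bound on $u_\delta(0,t)$; one can alternatively reinforce the ``$\delta$ near $R$'' case by noting $\area S(\rho_{\delta,R})=4\pi\alpha+2\pi(R^2-\delta^2)<\pi R^2$ there, which by the monotonicity of area and Lemma \ref{limit of long time flow} rules out immortality, though this is not needed above. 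A pervasive but routine point is that $\rho_{\delta,R}$ is only Lipschitz, so every appeal to $\mathcal G_R$, to the comparison and Sturmian lemmas, and to continuous dependence must be made at a positive time once the flow has regularized, as explained in Section \ref{interpolation surfaces}.
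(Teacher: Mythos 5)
Your proof is correct and follows the same interpolation framework as the paper (Angenent torus and catenoid barriers, openness of the pinching set via Proposition \ref{left open} and continuous dependence, dichotomy from Proposition \ref{singular time}). The one place where you genuinely diverge is how the critical parameter $a=\sup\mathscr L$ is shown not to produce a boundary singularity. The paper invokes Lemma \ref{away from boundary} to get the \emph{uniform} bound $u_\delta(0,t)\le\alpha$ for every $\delta$ in the pinching interval with $\delta>1$, and then passes this bound to the limit $\delta\to n_R$ using continuous dependence and $T(\rho_{\delta,R})\to\infty$; this both rules out $n_R\in\mathscr R$ and supplies the quantitative inequality $n_R\le\alpha$, $u_{n_R}(0,t)\le\alpha$, which the paper reuses later (Remark \ref{neck point upper bound}, Corollary \ref{initial neck point limit}). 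You instead argue by contradiction: if $a\in\mathscr R$, then by continuous dependence the nearby $\delta<a$ flows are also confined near $\{x=R\}$ at a late time $t_0$, at which point a catenoid with neck radius slightly above $\alpha$ becomes a barrier keeping $u_\delta(0,t)$ bounded below for $t\ge t_0$, contradicting $\delta\in\mathscr L$. Your route is a little more direct for the immortality statement alone, but it does not by itself reproduce the uniform bound $u_{n_R}(0,t)\le\alpha$ that the subsequent sections need; the paper's slightly longer argument buys that estimate. Two small remarks: your ``$\delta$ near $R$'' endpoint is weaker than the paper's, which handles all $\delta\ge\alpha$ with a single catenoid $x=\cosh y$ (so the paper gets $n_R\le\alpha$ directly); and where you cite the Sturmian theorem to keep the catenoid disjoint from $G_{f_\delta(\cdot,t)}$, it is cleaner to invoke Proposition \ref{comparison} together with the boundary-angle comparison (slope $>0$ for the catenoid versus $0$ for the free-boundary flow at $x=R$), which the paper does and which avoids any worry about touching-point transitions. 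Finally, you should record explicitly that $a>1$ strictly (which follows since $1\in\mathscr L$ and $\mathscr L$ is open), as this is what makes the height estimate $\alpha/\delta<\alpha$ available for $\delta$ near $a$.
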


\begin{proof}

For $\delta \leq 1$, we know that the curve $\rho_{\delta, R}$ is on top of the profile curve of the Angenent torus $\mathcal{A}$. Since the neck point of $\mathcal{A}$ tends to the origin under the mean curvature flow at some finite time $T'$, by the comparison principle, $u_{\delta}(0, t) \to 0$ at some time $t < T'$. By the comparison principle Proposition \ref{comparison}, if $u_{\delta}(0, t) \to 0$ in finite time, then $u_{\delta'}(0,t) \to 0$ in finite time for all $0 < \delta' < \delta$.

For $\alpha \leq \delta < R$, the curve $\rho_{\delta, R}$ lies within the region $[\delta, R]\times[0,1]$. In contrast, the catenoid, described by $x = \cosh y$, remains static under the mean curvature flow and it lies strictly above the line $y = 1$ within the interval $[\delta, R]$ (since $\cosh 1 < 2 < \alpha \leq \delta$). Thus this catenoid is on top of $\rho_{\delta, R}$, and the function $y = \ln (x + \sqrt{x^2 - 1})$ has positive derivative at $x = R$. Thus by Proposition \ref{comparison}, this catenoid is on top of $G_{f_{\delta}(\cdot, t)}$ for all time $t$, which implies $u_{\delta}(0,t) \geq 1$ for all $t$.

Hence, by Proposition \ref{continuous dependence} and Proposition \ref{left open}, there exists a maximal interval $(0, n_R)$ such that for any $\delta$ within this interval, $u_{\delta}(0, t)$ converges to $0$ in finite time. As indicated in the preceding argument, $1 < n_R \leq \alpha$. By Proposition \ref{comparison} and Proposition \ref{singular time}, the singular time $T(\rho_{\delta, R})$ is strictly increasing in $\delta \in (0, n_R)$, and its limit as $\delta\to n_R$ has to be $\infty$, otherwise $u_{n_R}(0,t)$ will converge to $0$ in finite time by Proposition \ref{continuous dependence}.

By the selection of $n_R$, $u_{n_R}(0, t)$ never reaches $0$ in finite time. Combining this with Lemma \ref{away from boundary}, we conclude that for $\delta \in (1, n_R)$ and $t \in [0, T(\rho_{\delta, R}))$, we have $u_{\delta}(0, t) \leq 2 \alpha)$. Utilizing Proposition \ref{continuous dependence} and the fact that $\lim_{\delta \to n_R} T(\rho_{\delta, R}) = \infty$, we can deduce that $u_{n_R}(0, t) \leq 2 \alpha$ for all $t$.

As a result, $u_{n_R}(0,t)$ does not converge to either $0$ or $R$ within any finite time, by Proposition \ref{singular time}, the solutions $u_{n_R}(\cdot, t)$ and $v_{n_R}(\cdot, t)$ exist for all time $t \in [0, \infty)$. 
\end{proof}

From the construction above, we know that the solution $f_{n_R}(\cdot, t)$ to the equation \eqref{mcf equation} exists for all time $t \in [0, \infty)$. In the following, we will show the free boundary mean curvature flow induced from $f_{n_R}(\cdot,t)$ will converge to the plane with multiplicity $2$.

Next, we show that the neck point of the function $f_{n_R}$ converges to $0$ as $t \to \infty$. 

\begin{lem} \label{neck point limit}
$\lim\limits_{t \to \infty} u_{n_R} (0, t) = 0$.
\end{lem}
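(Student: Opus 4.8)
The plan is to prove $\lim_{t\to\infty} u_{n_R}(0,t) = 0$ by contradiction, combining the trichotomy from Proposition \ref{singular time} with the barrier constructed in Lemma \ref{height barrier}. First I would record what the immortal flow already gives us: by the construction in Theorem \ref{free boundary immortal flow}, the flow $f_{n_R}(\cdot,t)$ exists for all $t \geq 0$, and $u_{n_R}(0,t)$ never reaches $0$ or $R$ in finite time, with $u_{n_R}(0,t) \leq 2\alpha < R$ for all $t$. Since $f_{n_R}(\cdot,0) = \rho_{n_R, R}$ is on top of the profile curve of the Angenent torus $\mathcal{A}$ (because $n_R \le \alpha$ and $\mathcal{A}$ lies in $1 < x < \alpha$), and $\mathcal A$ reaches the axis at time $T'$, the comparison principle would force $u_{n_R}(0,t) \to 0$ at a finite time if the neck ever got pinched — so the neck point stays positive for all finite time but we must show it goes to $0$ in the limit.

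The key quantity to track is the height $h(t) := f_{n_R}(R,t)$, which by Lemma \ref{height decreasing} is strictly decreasing, hence has a limit $h_\infty := \lim_{t\to\infty} h(t) \ge 0$. The heart of the argument is to show $h_\infty = 0$ and then deduce the neck-point claim. Suppose for contradiction that $h_\infty > 0$. I would pick some $a$ with $0 < a < h_\infty$ and apply Lemma \ref{vertical graph gradient lower bound} on the interval $[a, h(t)]$ (shrinking $a$ or $h$ slightly if needed so the hypotheses apply uniformly), obtaining a uniform lower bound $u_{n_R}(a,t) \le R - \frac{2\varepsilon}{\lambda} e^{-\lambda^2 t}$; more importantly, the positive-derivative estimate $u_y \ge \varepsilon e^{-\lambda^2 t}\sin(\lambda(y-a))$ keeps the curve genuinely increasing, so its profile stays a graph over a region bounded away from the axis and from $x=R$. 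Then I would use the barrier $\Gamma^a(t)$ (or rather the translates $\Gamma^a_\xi(t)$ of Lemma \ref{height barrier}, the rotated graph of the Lipschitz function $l_a$): by Lemma \ref{height barrier}, any such barrier strictly decreases its height by a definite amount $\beta_{a,R}$ after time $T_{a,R}$. By iterating — each time placing a fresh barrier $\Gamma^a_\xi(t)$ just above the current flow at a time $t_k$, waiting time $T_{a,R}$, and using the comparison principle Proposition \ref{comparison} together with the Sturmian count to control intersections with $\partial C_R$ — one sees that the height $h(t)$ must drop by at least $\beta_{a,R}$ infinitely often, contradicting $h(t) \ge h_\infty > 0$. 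Alternatively, and more cleanly, the energy bound $\int_0^\infty \int_{M(t)} |\vec H|^2 \, d\mathcal H^2 \, dt \le \area(M(0)) < \infty$ from the first variation formula forces $\vec H \to 0$ subsequentially, so any subsequential limit of $M(t)$ is a stationary free-boundary varifold in $C_R$; by Lemma \ref{limit of long time flow} the only rotationally symmetric such varifolds are horizontal disks $\{y = c\}$, and since the flow is confined to $\{0 \le y \le \alpha\}$ with strictly decreasing height, the only possibility compatible with the neck structure (two sheets joined by a neck) degenerating is that the limit disk sits at height $h_\infty$ with multiplicity $2$ — but a multiplicity-$2$ disk has a neck region collapsing, which means precisely that $u_{n_R}(0,t) \to 0$ while forcing $h_\infty$ could a priori be positive; to pin $h_\infty = 0$ I would then invoke that the flow cannot converge to a nonzero-height configuration without the neck point reaching the axis, which by the trichotomy (Proposition \ref{singular time}) would be a finite-time singularity — contradiction.

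So the structure is: (1) set $h_\infty = \lim h(t)$; (2) show $h_\infty > 0$ is impossible, either via the iterated height-barrier argument of Lemma \ref{height barrier} or via the stationary-limit / area-monotonicity argument; (3) conclude $h(t) \to 0$; (4) deduce from $h(t) \to 0$, using the gradient estimate Lemma \ref{vertical graph gradient upper bound} and Corollary \ref{height 0 neck R} (or the argument behind it, with $T = \infty$), that $u_{n_R}(0,t) \to 0$ — indeed Corollary \ref{height 0 neck R} says $h = 0$ forces the neck point to tend to $R$, which contradicts $u_{n_R}(0,t) \le 2\alpha < R$; so in fact we must argue the other direction. This last point is the subtlety I expect to be the \emph{main obstacle}: $h(t) \to 0$ and "neck point bounded away from $R$" together are exactly the hypotheses under which Corollary \ref{height 0 neck R} derives a contradiction, \emph{for a finite singular time}. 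With $T = \infty$ the contradiction becomes instead the statement we want: the only way the curve can have height tending to $0$, neck point staying $\le 2\alpha$, and remain a monotone graph, is if $u_{n_R}(0,t) \to 0$ — i.e. the vertical graph piece must be squeezed down to the origin. I would make this precise by observing that if $\liminf_{t\to\infty} u_{n_R}(0,t) = c > 0$, then along a subsequence the flow restricted to $\{x \le c/2\}$ has a uniform gradient bound (Lemma \ref{vertical graph gradient upper bound}, with the $\sigma(\delta)/t \to 0$ as $t\to\infty$) and height bounded below by the argument of Corollary \ref{height 0 neck R}, giving $h(t) \ge \frac{c'}{2} > 0$, contradicting $h(t) \to 0$. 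Hence $\liminf u_{n_R}(0,t) = 0$; combined with the monotone-type control on the neck point (it moves monotonically toward the origin after long enough time, as remarked after Lemma \ref{away from boundary}), this upgrades to $\lim_{t\to\infty} u_{n_R}(0,t) = 0$.
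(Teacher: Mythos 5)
Your proposal correctly identifies the central tool---the height-dropping barrier $\Gamma^a_\xi(t)$ of Lemma~\ref{height barrier} and the iterated comparison argument---which is exactly the heart of the paper's proof. However, the way you organize the contradiction is off in a way that matters. You set up the contradiction hypothesis as $h_\infty := \lim_{t\to\infty} f_{n_R}(R,t) > 0$, plan to refute it, and then deduce $u_{n_R}(0,t)\to 0$ as a second step. The paper instead contradicts the conclusion directly: assume a sequence $t_i\nearrow\infty$ with $u_{n_R}(0,t_i) > a$. This hypothesis is precisely what allows the barrier to be placed. The barrier $y = l_a(x) + f_{n_R}(R,t_i) - 1$ lies on top of $\chi(t_i)$ (the doubled profile) \emph{only because} $\chi(t_i)$ does not exist for $x < u_{n_R}(0,t_i)$; in the range $x < a$ the barrier dips to $f_{n_R}(R,t_i)-1$, which would fall \emph{below} the curve if the neck point were smaller than $a$ and the curve had positive height there. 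So ``$h_\infty > 0$'' by itself does not authorize the placement of $\Gamma^a_\xi$; you need ``neck point $\geq a$ infinitely often,'' and nothing in your proposal supplies that. Lemma~\ref{vertical graph gradient lower bound}, which you invoke, gives $u_{n_R}(a,t) \le R - c$ (a bound \emph{away from $R$}), not a bound on the neck point away from the axis---and it is stated for a finite singular time $T$, not for $T=\infty$.

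The auxiliary difficulties you flag are real but are artifacts of your detour. Your alternative route via the energy bound $\int_0^\infty\int |\vec H|^2 < \infty$ and Lemma~\ref{limit of long time flow} does not pin $h_\infty = 0$ either: a pair of parallel free-boundary disks at heights $\pm h_\infty$ is a perfectly good rotationally symmetric stationary limit, so the varifold argument alone cannot rule out $h_\infty > 0$. And the passage from $h_\infty = 0$ to $u_{n_R}(0,t)\to 0$ is not available through Lemma~\ref{vertical graph gradient upper bound} the way you suggest: $\sigma$ there depends on $T$ (because the comparison function $L_a(\cdot,t)$ flattens out as $t\to\infty$, so the constant $A_\delta$ must grow with $T$), so you cannot fix $\sigma$ and send $t\to\infty$ to get $u_y \lesssim 1$. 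In short, the right logical shortcut is the one the paper takes: contradict ``$u_{n_R}(0,t_i) > a$ for $t_i\nearrow\infty$'' directly, place the barrier $l_a + f_{n_R}(R,t_i)-1$ at each $t_i$ (which is on top precisely because the neck point exceeds $a$), and iterate the $\beta_{a,R}$ height drop to push $f_{n_R}(R,t_i)$ below $0$. The intermediate claim $h_\infty = 0$ is then a \emph{consequence} of the lemma (established later via Lemma~\ref{height upper bound} and Proposition~\ref{gradient estimate}), not a stepping stone toward it.
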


\begin{proof}

We prove this by contradiction. The idea is to compare the flow with the barrier that we constructed in Lemma \ref{height barrier}. Suppose there exists $a > 0$, and a sequence $\{t_i\}_{i\in\Z_+}, t_i \nearrow \infty$ such that $u_{n_R} (0, t_i) > a$. Up to extracting a subsequence, we may assume that $t_{i + 1} > t_i + T_{a, R}$, where $T_{a, R}$ is the constant in Lemma \ref{height barrier}.

Let $\chi(t)$ be the union of the graph of $f_{n_R}(\cdot, t)$ and its reflection with respect to the $x-$axis. $\chi(0)$ is bounded between the lines $y = \pm \alpha$. By the monotonicity of $f_{n_R}$ with respect to $x$, we know that the graph of $l_a(x) + f_{n_R}(R, t_i) - 1$ is on top of $\chi(t_i)$ for all $i$. Since $L_a(x, t) + f_{n_R}(R, t_i) - 1$ solves the mean curvature flow equation, by comparison principle Proposition \ref{comparison}, we know that the graph of $L_a(x, t) + f_{n_R}(R, t_i) - 1$ is on top of $\chi(t + t_i)$.

By Lemma \ref{height barrier}, 
\begin{align*}
f_{n_R}(R, t_{i + 1}) \leq L_a(x, t_{i+1} - t_i) + f_{n_R}(R, t_i) - 1 < f_{n_R}(R, t_i) - \beta_{a,R},
\end{align*}
for all $i$. Then $f_{n_R}(R, t_{i+1}) < f_{n_R}(R, t_1) - i \beta_{a,R}$. Let $i \to \infty$, we get a contradiction to the fact that $f_{n_R}(R, t) \geq 0$ for all $t$. Hence $\lim\limits_{t \to \infty} u_{n_R} (0, t) = 0$.

\end{proof}

\begin{rmk} \label{neck point upper bound}
Since $1 < n_R \leq \alpha$, and the flow $f_{n_R}(x,t)$ exists for all future time, by Lemma \ref{away from boundary} and Lemma \ref{neck point limit}, we have $u_{n_R}(0,t) \leq \alpha$ for all $R > 2\alpha$ and $t \geq 0$.
\end{rmk}

\begin{rmk} \label{immortal neck point}
The above argument works for all rotationally symmetric free boundary mean curvature flows $S(G_{f(\cdot, t)})$ that exist for all future time, which implies that their neck points must converge to $0$ as $t \to \infty$.

\end{rmk}

As a consequence, we can prove the following height lower bound which depends on the location of the neck point for the solution $f_{n_R}$.

\begin{coro} \label{lower bound on height}
Given $R > 2 \alpha$, if $u_{n_R}(0, t_0) \geq \kappa$ at time $t_0$, where $\kappa \in (0,R)$, then $f_{n_R}(R, t_0) > \frac{\kappa}{2}$. Conversely, if $f_{n_R}(R, t_0) \leq  \kappa$ at time $t_0$, then $u_{n_R}(0, t_0) < 2 \kappa$.
\end{coro}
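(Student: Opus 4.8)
The plan is to prove both implications by comparing $G_{f_{n_R}(\cdot,t_0)}$ with a suitable catenoid profile curve and then invoking the Sturmian theorem together with Remark \ref{neck point upper bound}, so that a forbidden second intersection forces the desired height bound. First I would set up the catenoid barrier: given the hypothesis $u_{n_R}(0,t_0)\geq\kappa$, consider the catenoid with neck point $(\kappa',0)$ for $\kappa'$ slightly less than $\kappa$, whose profile is $x=\kappa'\cosh(y/\kappa')$, equivalently the vertical graph $w(y)=\kappa'\cosh(y/\kappa')$. Since the neck point of $G_{f_{n_R}(\cdot,t_0)}$ is at $x$-coordinate $u_{n_R}(0,t_0)\geq\kappa>\kappa'$, while both curves are strictly monotone in $x$ by Proposition \ref{consistent}, near height $y=0$ the catenoid lies strictly to the left of $G_{f_{n_R}(\cdot,t_0)}$. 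If the height $f_{n_R}(R,t_0)$ were at most $\kappa/2$, I would derive a contradiction by controlling how far the catenoid rises over the $x$-range $[\kappa',R]$: a direct estimate shows $\kappa'\cosh(y/\kappa')=R$ forces $y>\kappa/2$ (using $R>2\alpha$ and $\kappa'$ close to $\kappa\leq\alpha$), so by the intermediate value theorem the catenoid, starting to the left of $G_{f_{n_R}}$ at the bottom and ending above it in height, would have to cross $G_{f_{n_R}(\cdot,t_0)}$; but one needs to check the crossing count is genuinely $\geq 2$, which follows by tracking the curves at the neck level and at the top.

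The cleaner route, which I would actually carry out, is to run the comparison dynamically rather than at the single time $t_0$, exactly as in Lemma \ref{away from boundary}. For the first implication: suppose toward a contradiction that $f_{n_R}(R,t_0)\leq \kappa/2$. Pick the static catenoid through $(\kappa',0)$ with $\kappa'$ chosen so that $\kappa/2<\kappa'<\kappa$; this catenoid satisfies the free boundary–compatible boundary condition (positive slope at $x=R$), and since $u_{n_R}(0,t_0)\geq\kappa>\kappa'$ whereas the catenoid has height $>\kappa/2\geq f_{n_R}(R,t_0)$ at $x=R$, the curve $\mathscr{C}$ and $G_{f_{n_R}(\cdot,t_0)}$ cannot intersect (an intersection would force two, contradicting that they intersect at most once — this last fact comes from the Sturmian theorem applied over $[0,t_0]$, noting the catenoid starts with exactly one intersection with $\rho_{n_R,R}$ and no new intersection can enter on $\partial C_R$ because the height of the flow stays below the catenoid's boundary height). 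Hence $\mathscr{C}$ is on top of $G_{f_{n_R}(\cdot,t_0)}$, so $u_{n_R}(0,t_0)\leq\kappa'<\kappa$, contradiction. This proves that $u_{n_R}(0,t_0)\geq\kappa$ implies $f_{n_R}(R,t_0)>\kappa/2$.

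For the converse, suppose $f_{n_R}(R,t_0)\leq\kappa$ but $u_{n_R}(0,t_0)\geq 2\kappa$. Apply the first implication with $\kappa$ replaced by $2\kappa$: since $u_{n_R}(0,t_0)\geq 2\kappa$ we get $f_{n_R}(R,t_0)>\kappa$, contradicting $f_{n_R}(R,t_0)\leq\kappa$. (One must check $2\kappa<R$ so the first implication applies; since $f_{n_R}(R,t_0)\leq\kappa$ and the height is positive, and Remark \ref{neck point upper bound} gives $u_{n_R}(0,t_0)\leq\alpha$, the relevant $\kappa$ is bounded by $\alpha$, so $2\kappa<2\alpha<R$ — if $\kappa\geq R/2$ the statement is vacuous or handled trivially.) The main obstacle is the bookkeeping in the first implication: verifying that the relevant catenoid genuinely has exactly one intersection with the initial curve $\rho_{n_R,R}$ and that no intersection can be created on the boundary for all $t\in[0,t_0]$ — this requires the uniform height bound $f_{n_R}(R,t)\leq\alpha$ (from Lemma \ref{height decreasing} and the initial data) to stay strictly below the catenoid's boundary height, and a careful choice of the intermediate neck parameter $\kappa'$ so that all the cosh-estimates go through with the sole hypothesis $R>2\alpha$.
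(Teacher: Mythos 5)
There is a genuine gap at the end of your argument for the first implication. After establishing that the catenoid $\mathscr{C}$ (neck at $(\kappa',0)$, $\kappa/2<\kappa'<\kappa$) and $G_{f_{n_R}(\cdot,t_0)}$ do not intersect, you conclude that $\mathscr{C}$ is on top of $G_{f_{n_R}(\cdot,t_0)}$ and then infer $u_{n_R}(0,t_0)\leq\kappa'$. That inference is geometrically backwards. In the paper's sense, ``$\mathscr{C}$ on top of $G_{f_{n_R}(\cdot,t_0)}$'' means higher $y$ at each common $x$, equivalently (both curves being strictly monotone) smaller $x$ at each common $y$; in particular the neck of $\mathscr{C}$ sits to the \emph{left} of the neck of the flow, which gives $\kappa'\leq u_{n_R}(0,t_0)$. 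That is fully consistent with your hypothesis $u_{n_R}(0,t_0)\geq\kappa>\kappa'$, so no contradiction arises at time $t_0$: the configuration you describe --- catenoid on top, flow confined to $\{(x,y):\kappa<x\leq R,\ 0\leq y\leq\kappa/2\}$ --- is perfectly coherent.

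The missing piece is a forward-in-time step. Once $\mathscr{C}$ is strictly on top at $t_0$, and since $\mathscr{C}$ has smaller intersection angle with $\{x=R\}$ than the free-boundary flow (positive slope vs.\ zero slope), Proposition~\ref{comparison} keeps $\mathscr{C}$ on top for all $t\geq t_0$, which forces $u_{n_R}(0,t)\geq\kappa'>\kappa/2$ for all $t\geq t_0$. This contradicts Lemma~\ref{neck point limit}, which says $u_{n_R}(0,t)\to 0$ as $t\to\infty$. That is exactly how the paper closes the argument. The paper also uses a cleaner mechanism to establish ``on top at $t_0$'': taking the catenoid with neck $(\kappa/2,0)$, its height at $x=\kappa$ is $\tfrac{\kappa}{2}\cosh^{-1}(2)>\kappa/2$, so (by Proposition~\ref{consistent}) the catenoid lies entirely above the box confining the flow's profile at $t_0$. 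This is a purely static comparison at time $t_0$ --- no Sturmian tracking from $t=0$, no intersection count with $\rho_{n_R,R}$ --- so the bookkeeping you flag at the end of your proposal is avoided altogether. Your deduction of the converse from the first implication is fine once the first implication is repaired in this way.
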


\begin{proof}

Suppose $u_{n_R}(0,t_0) \geq \kappa$, we prove $f_{n_R}(R, t_0) > \frac{\kappa}{2}$ by contradiction. Suppose by contrary that $f_{n_R}(R,t_0) \leq \frac{\kappa}{2}$, then by Proposition \ref{consistent}, we know the graph of $f_{n_R}(\cdot, t_0)$ is bounded in the region $\{(x,y)|\ \kappa < x \leq R,\ 0 \leq y \leq \frac{\kappa}{2}\}$, and intersects the line $x = R$ orthogonally. Then consider the restricted curve $x = \frac{\kappa}{2} \cosh \frac{2y}{\kappa}, x \leq R$, which is on top of graph of $f_{n_R}(\cdot, t_0)$, and has smaller intersection angle with the line $x = R$. By Proposition \ref{comparison}, we know that $u_{n_R}(0,t) > \frac{\kappa}{2}$ for time $t \geq t_0$. However, By Lemma \ref{neck point limit}, we know that $u(0,t)\to 0$ as $t \to \infty$. This is a contradiction.

\end{proof}

We can also use the Angenent torus as a barrier to obtain a height upper bound.

\begin{lem} \label{height upper bound}
$f_{n_R}(x, t) < \alpha x$ for all $t \in [0,\infty), x \in (u_{n_R}(0,t),\frac{R}{\alpha}]$.
\end{lem}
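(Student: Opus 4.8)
The plan is to prove the pointwise bound $f_{n_R}(x,t) < \alpha x$ by a barrier argument using the Angenent torus $\mathcal{A}$ and its rescalings. First I would check the claim at $t = 0$: the initial curve $\rho_{n_R, R}$ consists of the vertical segment $\{x = n_R\}$ up to height $\alpha/n_R$ together with the horizontal segment at height $\alpha/n_R$; since $n_R \le \alpha$, on the relevant $x$-range the height of $\rho_{n_R,R}$ is at most $\alpha/n_R \le \alpha \le \alpha x$ whenever $x \ge 1$, and for $x \in (n_R, 1)$ one uses that the curve is still the vertical segment $x = n_R$ so the inequality $y < \alpha x = \alpha n_R$ holds there as well (here one should be a bit careful with the precise endpoints, but the point is that $\rho_{n_R,R}$ lies strictly below the line $y = \alpha x$ for $x \in (u_{n_R}(0,0), R/\alpha]$). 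So the inequality holds initially.

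The main step is to propagate the bound in time using the Sturmian theorem together with the Angenent torus as a barrier. For a scaling parameter $\mu > 0$, let $\mathcal{A}_\mu = \mu \mathcal{A}$ be the rescaled Angenent torus, whose profile curve is confined to the rectangle $\mu < x < \mu \alpha$, $0 \le y < \mu\alpha$ (in the upper half-plane), and which shrinks to a point in finite time under MCF. The key geometric observation is that for suitable $\mu$ the upper profile curve of $\mathcal{A}_\mu$ is tangent to, or just above, the line $y = \alpha x$ near the ``inner'' part while lying below it elsewhere, so that it has a controlled (small) number of intersections with $G_{f_{n_R}(\cdot,t)}$. I would then argue by contradiction: if $f_{n_R}(x_0, t_0) \ge \alpha x_0$ for some $t_0 > 0$ and $x_0 \in (u_{n_R}(0,t_0), R/\alpha]$, then choosing $\mu$ appropriately the rescaled torus $\mathcal{A}_\mu$ (placed so its profile curve passes through $(x_0, \alpha x_0)$ or touches the flow) would, by the Sturmian theorem (counting intersections with the union $\chi(t)$ of $G_{f_{n_R}(\cdot,t)}$ and its reflection), force the flow to be trapped inside the torus; but the torus collapses to a point in finite time, contradicting the fact that $f_{n_R}$ is immortal with neck point staying below $\alpha$ (Remark \ref{neck point upper bound}). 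One must check that no new intersection points are created on $\partial C_R = \{x = R\}$: this follows because on the boundary the flow has bounded height (indeed $f_{n_R}(R,t)$ is decreasing by Lemma \ref{height decreasing}, bounded by $\alpha$), while the relevant part of $\mathcal{A}_\mu$ near $x = R$ (if it reaches there at all for the chosen $\mu$) stays at heights incompatible with producing a new crossing — alternatively, choose $\mu$ small enough, or $R$ large enough relative to the scale, that $\mathcal{A}_\mu$ does not meet the boundary at all.

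The hard part will be organizing the intersection-count bookkeeping so that the Sturmian theorem genuinely forces containment: one needs to know the exact number of intersections of $\chi(0)$ with the rescaled torus profile (together with its reflection, so that the relevant comparison is between two closed rotationally symmetric curves), verify this number is preserved (no boundary crossings), and then deduce that if the flow ever pokes above $y = \alpha x$ it must have already crossed the collapsing torus too many times. An alternative, possibly cleaner route is to avoid the line $y = \alpha x$ directly and instead compare with the line through the origin of slope $\alpha$ as a static barrier for the horizontal graph equation $v_t = v_{xx}/(1+v_x^2) + v_x/x$ — but a line of positive slope through the origin is not a solution of this equation (it has nonzero ``curvature'' contribution from the $v_x/x$ term), so one really does need the torus. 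I would therefore expect the intersection-number argument with $\mathcal{A}_\mu$ to be the technical crux, and I would handle the boundary term either by a scale choice making $\mathcal{A}_\mu \cap \{x = R\} = \emptyset$ or by invoking the monotone decrease of $f_{n_R}(R,t)$ to rule out boundary-generated crossings.
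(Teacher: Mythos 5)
Your proposal identifies the right barrier (rescaled Angenent torus) and the right contradiction mechanism (torus shrinks to a point in finite time, which is incompatible with the flow being immortal), so the idea is sound. But the route you sketch is substantially more complicated than necessary, and the paper's actual argument is essentially a one-liner that uses none of the machinery you propose.

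The paper argues by contradiction directly at the hypothetical time of violation. If $f_{n_R}(x_0,t) \geq \alpha x_0$ for some $t\geq 0$ and $x_0\in(u_{n_R}(0,t), R/\alpha]$, then since $f_{n_R}(\cdot,t)$ is increasing in $x$, we have $f_{n_R}(x,t)\geq\alpha x_0$ for all $x\geq x_0$; in particular $G_{f_{n_R}(\cdot,t)}$ lies strictly above the rectangle $\{x_0<x<\alpha x_0,\ 0\leq y<\alpha x_0\}$ which contains the upper profile of the rescaled Angenent torus $x_0\mathcal{A}$. The hypothesis $x_0\leq R/\alpha$ guarantees $\alpha x_0\leq R$, so the torus sits inside $C_R$ and never meets the free boundary $\{x=R\}$. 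The hypothesis $x_0>u_{n_R}(0,t)$ guarantees the neck point of the profile curve is strictly to the left of the torus (whose inner radius is $\tfrac{11}{10}x_0>x_0$). Thus $G_{f_{n_R}(\cdot,t)}$ is on top of $x_0\mathcal{A}$, and the ordinary comparison principle (Proposition \ref{comparison} / avoidance, not the Sturmian theorem) forces the neck $u_{n_R}(0,\cdot)$ to reach $0$ in finite time as the torus collapses to a point --- exactly as already exploited in the proof of Theorem \ref{free boundary immortal flow}. This contradicts immortality. No check at $t=0$ is needed, no propagation in time, and no intersection-number bookkeeping --- the scaling $\mu=x_0$ is forced by the data and makes the torus sit entirely below the curve rather than touch it, so the ``hard part'' you anticipate simply disappears. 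You were right to reject the static-line barrier, but the Sturmian approach you favor introduces exactly the kind of delicate boundary-crossing accounting that the direct avoidance argument sidesteps.
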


\begin{proof}

We prove this by contradiction. Suppose not, then $f_{n_R}(x_0,t) \geq \alpha x_0$ for some $t \in [0,\infty)$, $x_0 \in (u_{n_R}(0,t),\frac{R}{\alpha}]$, then $G_{f_{n_R}(\cdot, t)}$ is on top of the Angenent torus $x_0 \mathcal{A}$, which implies that $f_{n_R}$ has a finite time singularities. This is a contradiction.
\end{proof}

We are ready to prove the key long-time gradient estimate of $f_{n_R}$.

\begin{prop} \label{gradient estimate}
For any $R > 2 \alpha$, given $0 < a < b < R$, there exist a constant $\omega$ depending on $a$, $R$ and a constant $T$ depending on $a$, $b$, $R$, such that $$\frac{\partial}{\partial x} f_{n_R}(x,t) \leq \tan \left(\frac{\pi}{2} \frac{\ln R - \ln b}{\ln R - \ln a} + \frac{\omega}{\ln t}\right),$$ for all $x \in [b,R], t \geq T$. In addition, $\lim\limits_{t \to \infty} f_{n_R}(R,t) - f_{n_R}(b,t) = 0$.
\end{prop}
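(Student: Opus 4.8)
The plan is to control the gradient of $f_{n_R}$ on the horizontal graph region by comparing the profile curve with a family of catenoids, exploiting the parabolic Sturmian theorem exactly as in the proof of Lemma~\ref{vertical graph gradient upper bound}, but now applied to the horizontal graph function $v_{n_R}$ rather than the vertical one. First I would recall from Lemma~\ref{neck point limit} and Corollary~\ref{lower bound on height} (together with Lemma~\ref{height upper bound}) that, for large $t$, the neck point $u_{n_R}(0,t)$ is small and the height $f_{n_R}(R,t)$ is small; in particular the entire horizontal graph over $[b,R]$ lies in a thin strip. The function $v = v_{n_R}$ solves the horizontal graph equation $v_t = v_{xx}/(1+v_x^2) + v_x/x$ on a region $[c(t),R]$ with $c(t)\to 0$, subject to the Neumann condition $v_x(R,t)=0$. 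The key point is that $\theta(x,t) := \arctan v_x(x,t)$ satisfies a parabolic inequality of the type used in Lemma~\ref{vertical graph gradient lower bound}, and on the other side the logarithmic profile $y = \mu \ln(x/x_0)$ of (a rescaled) catenoid neck is a natural subsolution/supersolution to compare against, because $\ln R - \ln b$ and $\ln R - \ln a$ are precisely the ``logarithmic widths'' appearing in the claimed bound.

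The key steps, in order, would be: (1) Fix $a < b < R$ and choose $T_0$ so large that for $t\ge T_0$ the neck point satisfies $u_{n_R}(0,t) < a$ (possible by Lemma~\ref{neck point limit}), so that $v_{n_R}(\cdot,t)$ is defined and smooth on $[a,R]$. (2) Introduce the ``logarithmic angle'' variable: since $v_x > 0$ on $(u_{n_R}(0,t),R)$ by Proposition~\ref{consistent}, consider the reparametrization by $\xi = \ln x$, under which the catenoid profile becomes linear; the quantity we want to bound, $\tan^{-1}(v_x)$, transforms in a controlled way. (3) Build a barrier foliation of catenoid profiles $x = C\cosh(y/C)$ for $C$ ranging over a suitable interval; for each relevant base point on $G_{f_{n_R}(\cdot,t_0)}$ there is a unique catenoid through it, and by the Sturmian theorem the number of intersections of $G_{f_{n_R}(\cdot,t)}$ with this catenoid cannot increase (no new intersection is created on the boundary $x=R$ because the catenoid meets $x=R$ with strictly positive slope while $f_{n_R}$ meets it orthogonally, i.e. with slope $0$ — this is the same mechanism as in Lemma~\ref{away from boundary}). (4) Convert the bounded-intersection-number statement into the pointwise slope bound: if $v_x(x_0,t_0)$ were larger than the slope of the comparison catenoid through $(x_0, v(x_0,t_0))$, then the curve $G_{f_{n_R}(\cdot,t_0)}$ would cross that catenoid at least twice, using monotonicity of both profiles and the intermediate value theorem — identical in spirit to the contradiction step in Lemma~\ref{vertical graph gradient upper bound}. (5) Track constants: the $\pi/2$ factor and the ratio $(\ln R - \ln b)/(\ln R - \ln a)$ come from the catenoid whose neck is near $x=a$ and which passes through the strip at height comparable to the height of $f_{n_R}$; the error term $\omega/\ln t$ comes from the fact that the neck point $u_{n_R}(0,t)$ is not exactly $a$ but tends to $0$, and the rate at which it does so feeds into a logarithmic correction — here I would use a quantitative catenoid barrier estimate (as in Corollary~\ref{lower bound on height}, but sharpened) relating $f_{n_R}(R,t)$ and $u_{n_R}(0,t)$, together with the fact that along a catenoid $y$ grows like $C\ln(x/C)$. (6) Finally, for the last assertion: integrate the slope bound, $f_{n_R}(R,t) - f_{n_R}(b,t) = \int_b^R v_x\, dx \le (R-b)\tan\!\big(\tfrac{\pi}{2}\tfrac{\ln R - \ln b}{\ln R - \ln a} + \tfrac{\omega}{\ln t}\big)$; this alone does not give the limit $0$ since the argument of $\tan$ does not tend to $0$ as $b\to R$ at fixed $t$. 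Instead, one lets $a\to 0$: for any $\e>0$, pick $a$ small enough (and $t$ large enough that $u_{n_R}(0,t)<a$) so that $\tfrac{\pi}{2}\tfrac{\ln R - \ln b}{\ln R - \ln a}$ is as small as desired, hence the oscillation of $f_{n_R}$ over $[b,R]$ is $<\e$; combined with $f_{n_R}(R,t)\to 0$ (which follows from Corollary~\ref{lower bound on height} and Lemma~\ref{neck point limit}: if $u_{n_R}(0,t)\to 0$ then $f_{n_R}(R,t)\to 0$, since otherwise a catenoid barrier would keep the neck away from $0$), this gives $f_{n_R}(b,t)\to 0$ and in particular $f_{n_R}(R,t)-f_{n_R}(b,t)\to 0$.

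I expect the main obstacle to be step~(5): producing the precise error term $\omega/\ln t$ with $\omega$ depending only on $a$ and $R$ (not on $b$), which requires a genuinely quantitative version of the catenoid comparison — one must pin down how close the relevant catenoid neck is to $x=a$ as a function of $t$, and this in turn hinges on a rate of convergence $u_{n_R}(0,t)\to 0$. The earlier results give only qualitative convergence of the neck point, so the proof presumably installs a catenoid barrier whose neck is at a point depending on $t$ in a way that produces the logarithmic rate; making the bookkeeping of these nested catenoid barriers uniform in $b$, while keeping the Sturmian intersection count correct across the moving boundary, is the delicate part. A secondary subtlety is ensuring the comparison curves genuinely meet the free boundary $x=R$ with the correct slope inequality so that Sturmian theorem forbids new boundary intersections throughout $[T,\infty)$, which uses $f_{n_R}(R,t)$ small (hence the catenoid through the strip has small neck, hence steep — wait, small neck means steep at $x=R$? no: a catenoid with tiny neck $C$ is nearly vertical near its neck but has slope $v_x(R) = 1/\sqrt{(R/C)^2-1}\to 0$ as $C\to 0$, so one must instead use catenoids with neck bounded below, and it is the orthogonality of $f_{n_R}$ at $x=R$, i.e. infinite slope of $G_{f_{n_R}}$ there, versus finite slope of the catenoid, that prevents new boundary intersections) — I would double-check this orientation carefully, as in Lemma~\ref{away from boundary}.
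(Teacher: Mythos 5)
Your proposal takes a genuinely different route from the paper, and the route has a gap that you yourself identify at the end — and that gap is fatal, not just delicate. The paper does \emph{not} prove this gradient estimate by Sturmian comparison with catenoids. Instead, it runs a direct parabolic maximum principle argument on the angle function $\phi(x,t) = \arctan\bigl(\tfrac{\partial}{\partial x}f_{n_R}(x,t)\bigr)$. One computes that $\phi$ satisfies
\[
\phi_t - \frac{\phi_x}{x} - \frac{\phi_{xx}}{1+f_x^2} = -\frac{f_x}{x^2(1+f_x^2)} < 0,
\]
and then writes down the explicit barrier
\[
\varphi(x,t) = \mu\ln\frac{R}{x} + \frac{\omega - x}{\ln t},\qquad \mu = \frac{\pi}{2\ln(R/a)},
\]
which for suitable $\omega = \omega(a,R)$ and $T'$ is checked by hand to be a supersolution of the same operator on $[a,R]\times[T',\infty)$ with $\varphi \geq \phi$ at $t=T'$, $\varphi(a,t)\geq \pi/2 \geq \phi(a,t)$, and $\varphi(R,t) > 0 = \phi(R,t)$ (the last boundary condition is the free boundary orthogonality $f_x(R,t)=0$). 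The classical maximum principle then gives $\phi\leq\varphi$, and evaluating $\varphi$ at $x=b$ yields exactly the claimed bound. The factor $\tfrac{\pi}{2}\tfrac{\ln R - \ln b}{\ln R -\ln a}$ is just $\mu\ln(R/b)$, and the $\omega/\ln t$ term is built into the barrier by design — no quantitative rate of neck convergence is needed, only the qualitative fact from Lemma~\ref{neck point limit} that $u_{n_R}(0,t)<a/2$ eventually, so that $f_{n_R}$ is a smooth graph on $[a,R]$. Your final step (integrate the slope bound, send $t\to\infty$, then send $a\to 0$) matches the paper exactly.

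The gap in your proposal is precisely what you flag in step (5). Catenoid profiles are \emph{static} solutions of the flow, so a Sturmian intersection count against a fixed catenoid can only yield a time-independent slope bound (this is exactly what Proposition~\ref{large x uniform gradient estiamte} does, producing a bound of the form $(\alpha+1)/\sqrt{x^2-(\alpha+1)^2}$ with no decay in $t$). To get an error term that decays like $\omega/\ln t$ you would need a comparison curve that improves with time, which a static catenoid cannot provide; you gesture at ``a quantitative version of the catenoid comparison'' tied to the rate of $u_{n_R}(0,t)\to 0$, but the paper at this point has only qualitative convergence of the neck point, and no such rate is ever established or needed. The paper's choice of a purpose-built, time-dependent supersolution for the angle $\phi$ is what sidesteps this entirely. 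Your step (2), introducing a logarithmic reparametrization and the angle variable, is the right instinct and is what the paper's barrier implicitly uses (since $\mu\ln(R/x)$ is linear in $\ln x$); but the execution should be a PDE supersolution argument, not an intersection-counting argument against catenoids.
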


\begin{proof}

Given any $0 < a < b < R$, by Lemma \ref{neck point limit}, we know that there exists $T > 10$ such that for any $t \geq T$, $u_{n_R}(0,t) < \frac{a}{2}$. Then $f_{n_R}(x,t)$ is a smooth function over $[a, R]$ for all $t \geq T$. For simplicity, we will use $f(x, t)$ to express the function $f_{n_R}(x,t)$ restricted on the interval $[a, R]$, which is smooth for all time $t \geq T$. We know
\begin{align*}
f_t = \frac{f_{xx}}{1+f_x^2} + \frac{f_x}{x}, \qquad f_x > 0 \text{ for all } x \in [a,R).
\end{align*}

Let $\phi(x,t) = \arctan (f_x(x,t)), x \in [a,R], t \in [T, \infty)$. Then $0 \leq \phi(x,t) < \frac{\pi}{2}$, and $f_x(x,t) = \tan (\phi(x,t))$. Then
\begin{align*}
\phi_x(x,t) = \frac{f_{xx}(x,t)}{1+f_x^2(x,t)}, \qquad f_t = \phi_x + \frac{f_x}{x}.
\end{align*}

Moreover, $$\phi_t = \frac{1}{1+f_x^2} (f_x)_t = \frac{1}{1+f_x^2} (f_t)_x = \frac{1}{1+f_x^2} (\phi_{xx} + \frac{f_{xx}}{x} - \frac{f_x}{x^2}).$$ 

Therefore,

\begin{equation} \label{phi bound}
\phi_t - \frac{\phi_x}{x} - \frac{1}{1+f_x^2} \phi_{xx} = - \frac{f_x}{x^2(1+f_x^2)} < 0.
\end{equation}

Let $\mu = \frac{\pi}{2 \ln \frac{R}{a}}$, and $\omega$ be a constant to be determined later. Let
\begin{align*}
\varphi(x,t) = \mu \ln \frac{R}{x} + \frac{\omega - x}{\ln t}.
\end{align*}

Then
\begin{equation*}
\begin{aligned}
&\varphi_x = -\frac{\mu}{x} - \frac{1}{\ln t} < 0, \quad \varphi_{xx} = \frac{\mu}{x^2} > 0, \\
&\varphi_t - \frac{\varphi_x}{x} - \frac{\varphi_{xx}}{1+f_x^2} \geq \varphi_t - \frac{\varphi_x}{x} - \varphi_{xx} = \frac{t \ln t - (\omega - x) x}{x t (\ln t)^2} \geq \frac{t \ln t - \omega R}{x t (\ln t)^2}.
\end{aligned}
\end{equation*}

Let $T' = T + R(R + \frac{\pi}{2})$, $\omega = R + \frac{\pi}{2} \ln T'$. Then 
\begin{align*}
t \ln t - \omega R \geq T' \ln T' - R(R + \frac{\pi}{2} \ln T') \geq R^2 (\ln T' - 1) > 0.  
\end{align*}

Thus for $x \in [a,R]$, $t \geq T'$, we know that
\begin{align*}
\begin{aligned}
& \varphi(x,T') \geq \frac{\omega - x}{\ln T'} \geq \frac{\pi}{2} \geq \phi(x,T'), \\
&\varphi_t - \frac{\varphi_x}{x} - \frac{\varphi_{xx}}{1+f_x^2} \geq 0 > \phi_t - \frac{\phi_x}{x} - \frac{\phi_{xx}}{1+f_x^2} , \\
&\varphi(a,t) \geq \mu \ln \frac{R}{a} = \frac{\pi}{2} \geq \phi(a,t), \\
&\varphi(R,t) > 0 = \phi(R,t).
\end{aligned}
\end{align*}

Now we can apply the parabolic maximum principle to conclude that $\varphi (x,t) \geq \phi(x,t)$ for all $x \in [a, R]$, $ t \geq T'$. On the other hand, for any $x \in [b, R]$, $\varphi(x,t) \leq \frac{\pi}{2} \frac{\ln R - \ln b}{\ln R -\ln a} + \frac{\omega - b}{\ln t}$. Thus there exists $T'' > T'$ such that for all $x \in [b, R]$ and $t \geq T''$, $\varphi(x,t) < \frac{\pi}{2}$. In summary, for $x \in [b,R]$, $t \geq T''$, we have the gradient estimate
\begin{equation}
    f_x(x,t) = \tan (\phi(x,t)) \leq \tan (\varphi(x,t)) \leq \tan (\varphi (b , t)) = \tan \left(\frac{\pi}{2} \frac{\ln R - \ln b}{\ln R -\ln a} + \frac{\omega - b}{\ln t}\right).
\end{equation}
Next, integrating this gradient bound yields
\begin{equation*}
f(R,t) - f(b,t) = \int_b^R f_x(x,t) dx \leq (R - b) \tan \left(\frac{\pi}{2} \frac{\ln R - \ln b}{\ln R -\ln a} + \frac{\omega - b}{\ln t}\right).
\end{equation*}

Let $t \to \infty$, 
\begin{align*}
0 \leq \limsup\limits_{t \to \infty} [f(R,t) - f(b,t)] \leq (R - b) \tan \left(\frac{\pi}{2} \frac{\ln R - \ln b}{\ln R -\ln a}\right).
\end{align*}

Finally let $a \to 0$, we have $
\lim\limits_{t \to \infty} [f(R,t) - f(b,t)] = 0$.

\end{proof}

The gradient estimate yields the following two corollaries, which can be combined to show that $f_{n_R}(x,t)$ will $C^1$ converge to $0$.

\begin{coro} \label{f uniform convergence}
$f_{n_R}(x, t)$ converges to $0$ uniformly as $t \to \infty$, for all $x \in (0,R]$.
\end{coro}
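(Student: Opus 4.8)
The plan is to combine the uniform height bound already available with the gradient estimate from Proposition \ref{gradient estimate}. By Lemma \ref{neck point limit} we know $u_{n_R}(0,t)\to 0$, and by Corollary \ref{lower bound on height} (its converse form) this forces $f_{n_R}(R,t)\to 0$: indeed if $f_{n_R}(R,t_i)$ stayed bounded below by some $\kappa>0$ along a sequence, then $u_{n_R}(0,t_i)\geq \kappa/2$ (hmm, actually the first direction of Corollary \ref{lower bound on height} reads the other way) — so more carefully, I would argue as follows. Suppose $f_{n_R}(R,t_i)\geq 2\eta$ for a sequence $t_i\to\infty$ and some $\eta>0$. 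Since $f_{n_R}(R,t)$ is the height of the whole curve (Lemma \ref{height decreasing} and the monotonicity $f_x>0$), the contrapositive of ``$f_{n_R}(R,t_0)\leq\kappa \Rightarrow u_{n_R}(0,t_0)<2\kappa$'' does not immediately help; instead I use the first half of Corollary \ref{lower bound on height} in reverse together with Lemma \ref{neck point limit}. Actually the cleanest route: by Lemma \ref{height decreasing}, $h:=\lim_{t\to\infty} f_{n_R}(R,t)$ exists; I claim $h=0$. If $h>0$, pick $a=h/2$; Lemma \ref{vertical graph gradient lower bound} then gives $u_{n_R}(a,t)\leq R-\tfrac{2\varepsilon}{\lambda}e^{-\lambda^2 T}<R$ uniformly, but more to the point it gives a uniform lower bound $u_{n_R}(h,t)-u_{n_R}(a,t)\geq \tfrac{2}{\lambda}e^{-\lambda^2 t}$ which degenerates; that's not quite it either. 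The correct and simplest argument is: combine $h=\lim f_{n_R}(R,t)$ with Corollary \ref{lower bound on height}'s first assertion applied at times where $u_{n_R}(0,t)$ is largest — but since $u_{n_R}(0,t)\to 0$, for any $\kappa>0$ eventually $u_{n_R}(0,t)<\kappa$, and there is no direct lower bound that way. So I will instead invoke Proposition \ref{gradient estimate}'s second conclusion directly.

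Here is the streamlined plan. Fix $x_0\in(0,R]$ and $\varepsilon>0$. First, by Lemma \ref{neck point limit}, choose $T_1$ so that $u_{n_R}(0,t)<x_0$ for $t\geq T_1$; then $f_{n_R}(\cdot,t)$ is a genuine smooth graph over $[x_0,R]$ for $t\geq T_1$, and in particular $f_{n_R}(x_0,t)\leq f_{n_R}(x,t)\leq f_{n_R}(R,t)$ for $x\in[x_0,R]$ by the monotonicity in Proposition \ref{consistent}. Second, I must show $f_{n_R}(R,t)\to 0$. For this, apply Corollary \ref{lower bound on height} in the following way: suppose for contradiction that $f_{n_R}(R,t)\geq 2\eta$ for all $t$ in some sequence $t_i\to\infty$ (the limit exists and is monotone by Lemma \ref{height decreasing}, so in fact $f_{n_R}(R,t)\geq 2\eta$ for \emph{all} $t$). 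Hmm — then I need $u_{n_R}(0,t)$ bounded below, which contradicts Lemma \ref{neck point limit}. To get that lower bound on the neck from a lower bound on the height, I use the converse direction of Corollary \ref{lower bound on height} contrapositively: ``$f_{n_R}(R,t_0)\leq\kappa \Rightarrow u_{n_R}(0,t_0)<2\kappa$'' is equivalent to ``$u_{n_R}(0,t_0)\geq 2\kappa \Rightarrow f_{n_R}(R,t_0)>\kappa$'', which is just the first direction again and goes the wrong way. So the genuinely correct tool is Proposition \ref{gradient estimate}: its last line says $\lim_{t\to\infty}[f_{n_R}(R,t)-f_{n_R}(b,t)]=0$ for every $b\in(0,R)$. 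Meanwhile I know $\int_0^\infty\!\!\int_{M^{n_R}(t)}|\vec H|^2\,d\mathcal H^2\,dt<\infty$ from the area monotonicity mentioned in the section sketch, so along a subsequence the flow converges to a stationary free-boundary varifold; by Lemma \ref{limit of long time flow} this limit is a union of flat disks $\{y=c_j\}$, and since the neck point tends to $0$ (Lemma \ref{neck point limit}) the only possibility consistent with connectedness and the reflection symmetry is the disk $\{y=0\}$ with multiplicity $2$, forcing $f_{n_R}(R,t_i)\to 0$ along that subsequence; monotonicity of $f_{n_R}(R,t)$ (Lemma \ref{height decreasing}) then upgrades this to $f_{n_R}(R,t)\to 0$ as $t\to\infty$.

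With $f_{n_R}(R,t)\to 0$ in hand, the uniform convergence is immediate: for $x\in[x_0,R]$ and $t\geq T_1$, $0\leq f_{n_R}(x,t)\leq f_{n_R}(R,t)$, and the right side tends to $0$ independently of $x$; for $x\in(0,x_0)$ we cover $(0,R]=\bigcup_{x_0>0}[x_0,R]$ and note that the bound $0\leq f_{n_R}(x,t)\leq f_{n_R}(R,t)$ in fact holds on all of $(u_{n_R}(0,t),R]$ whenever $u_{n_R}(0,t)<x$, and for any fixed $x>0$ this holds for $t$ large by Lemma \ref{neck point limit}; at points with $x\leq u_{n_R}(0,t)$ the value $f_{n_R}(x,t)$ is not defined as a horizontal graph, but the curve there has $y$-coordinate $0$ by definition of the neck, so $f_{n_R}$ is $\leq f_{n_R}(R,t)$ wherever defined. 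Hence $\sup_{x\in(0,R]} f_{n_R}(x,t)=f_{n_R}(R,t)\to 0$, which is the claimed uniform convergence.

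I expect the main obstacle to be making rigorous the passage ``$u_{n_R}(0,t)\to 0 \Rightarrow f_{n_R}(R,t)\to 0$'', since none of the comparison lemmas deliver this implication directly; the cleanest resolution is the varifold/stationarity argument using finite total $|\vec H|^2$ and Lemma \ref{limit of long time flow}, combined with the monotonicity of $f_{n_R}(R,t)$ from Lemma \ref{height decreasing} to turn subsequential convergence into full convergence. An alternative, more hands-on route would be to feed the gradient estimate of Proposition \ref{gradient estimate} back in: for fixed small $b$, $f_{n_R}(R,t)-f_{n_R}(b,t)\to 0$, and $f_{n_R}(b,t)\leq (b-u_{n_R}(0,t))\cdot\sup_{[u_{n_R}(0,t),b]}f_x$, but controlling $f_x$ near the neck is exactly what is hard, so I would default to the varifold argument.
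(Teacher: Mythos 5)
Your proposal correctly identifies the crux---showing $h:=\lim_{t\to\infty}f_{n_R}(R,t)=0$---and the final step (monotonicity of $f_{n_R}$ in $x$ gives uniform convergence from convergence of $f_{n_R}(R,\cdot)$) is fine. However, the route you ultimately commit to, the varifold/stationarity argument, has a genuine gap. From finiteness of $\int_0^\infty\int|\vec H|^2$ you extract a subsequential stationary limit, and by Lemma \ref{limit of long time flow} it is a union of flat free-boundary disks. You then claim that ``connectedness and reflection symmetry'' force the limit to be $\{y=0\}$ with multiplicity $2$. This is false: connectedness is \emph{not} preserved under varifold convergence (indeed the whole point of this example is that a connected flow limits to a plane of multiplicity $2$, which is not smoothly connected), and the pair of disks $\{y=\pm h\}$ with $h>0$ is rotationally symmetric, reflection symmetric, and a perfectly admissible stationary limit. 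The degenerating neck pinching off while the two sheets stay at heights $\pm h$ is exactly the scenario your argument fails to exclude.

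The missing ingredient is Lemma \ref{height upper bound}, which you never invoke: the Angenent-torus barrier gives $f_{n_R}(b,t)\leq\alpha b$ for $b\leq R/\alpha$ and all $t$. This is precisely the ``bound on $f_{n_R}(b,t)$ near the neck'' you flagged as the obstacle in your alternative route; it comes from a barrier, not from controlling $f_x$. Once you have it, the argument closes immediately: Proposition \ref{gradient estimate} gives $\lim_{t\to\infty}\bigl[f_{n_R}(R,t)-f_{n_R}(b,t)\bigr]=0$, hence $h=\lim_{t\to\infty}f_{n_R}(b,t)\leq\alpha b$; letting $b\to0$ yields $h=0$, and monotonicity in $x$ upgrades this to uniform convergence on $(0,R]$. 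This is the paper's proof, and it is both shorter and fills the hole in the varifold route. I would encourage you to revisit the auxiliary lemmas in the section before defaulting to soft compactness arguments: the quantitative barrier bound was exactly the tool you needed.
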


\begin{proof}

By Lemma \ref{height decreasing}, we know $\lim\limits_{t \to \infty} f_{n_R}(R,t)$ exists. By Lemma \ref{height upper bound}, Proposition \ref{gradient estimate}, for any $0 < b \leq \frac{R}{\alpha}$,
\begin{align*}
0 \leq \lim\limits_{t \to \infty} f_{n_R}(R,t) = \lim\limits_{t \to \infty} f_{n_R}(b,t) \leq \alpha b.
\end{align*}

Let $b \to 0$, we get $\lim\limits_{t \to \infty} f_{n_R}(R,t) = 0$. Since $f_{n_R}(\cdot, t)$ is a strictly increasing function, thus $f_{n_R}(x,t)$ converges to $0$ as $t \to \infty$ and this convergence is uniform in $x$. 

\end{proof}

\begin{coro} \label{fx uniformly convergence}
Given $0 < b < R$, $\frac{\partial}{\partial x} f_{n_R}(x,t)$ converges to $0$ uniformly as $t \to \infty$, for all $x \in [b, R]$.
\end{coro}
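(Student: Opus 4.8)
The plan is to combine the gradient estimate from Proposition \ref{gradient estimate} with the uniform convergence $f_{n_R}(\cdot,t)\to 0$ from Corollary \ref{f uniform convergence}. First, fix $0 < b < R$ and apply Proposition \ref{gradient estimate} with, say, $a = b/2$: this produces constants $\omega = \omega(a,R)$ and $T'' = T''(a,b,R)$ so that for all $x\in[b,R]$ and $t\ge T''$ we have
\[
\frac{\partial}{\partial x} f_{n_R}(x,t) \leq \tan\!\left(\frac{\pi}{2}\,\frac{\ln R - \ln b}{\ln R - \ln a} + \frac{\omega}{\ln t}\right).
\]
Since $a = b/2 < b < R$, the leading constant $c_{a,b,R} := \frac{\pi}{2}\cdot\frac{\ln R - \ln b}{\ln R - \ln a}$ is strictly less than $\pi/2$, so the right-hand side stays bounded and in fact $\limsup_{t\to\infty}\frac{\partial}{\partial x} f_{n_R}(x,t) \le \tan(c_{a,b,R})$ uniformly in $x\in[b,R]$. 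Letting $a\to 0$ (i.e.\ repeating the argument with $a\to 0^+$), the fraction $\frac{\ln R - \ln b}{\ln R - \ln a}\to 0$, hence $c_{a,b,R}\to 0$, giving $\limsup_{t\to\infty}\sup_{x\in[b,R]}\frac{\partial}{\partial x} f_{n_R}(x,t) = 0$. Combined with $\frac{\partial}{\partial x}f_{n_R}\ge 0$ (Proposition \ref{consistent}), this already yields $\frac{\partial}{\partial x} f_{n_R}(x,t)\to 0$ uniformly on $[b,R]$, and the corollary is essentially done at this level of softness.

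If one wants the sharper route that mirrors the paper's style, I would instead argue via interior parabolic estimates: Corollary \ref{f uniform convergence} gives $\|f_{n_R}(\cdot,t)\|_{C^0([b/2,R])}\to 0$, and the horizontal graph equation $f_t = f_{xx}/(1+f_x^2) + f_x/x$ is uniformly parabolic on $[b/2,R]$ once we have the a priori gradient bound above (the coefficient $1/(1+f_x^2)$ is bounded above and below, and $f_x/x$ has bounded coefficient since $x\ge b/2$). Then interior Schauder or Krylov–Safonov estimates (as used in Proposition \ref{singular time} via \cite{ladyzhenskaia1968linear}, and noting the Neumann condition $f_x(R,t)=0$ handles the boundary $x=R$) bound $\|f_{n_R}(\cdot,t)\|_{C^{2,1}([b,R]\times[t_0,t_0+1])}$ in terms of $\|f_{n_R}\|_{C^0([b/2,R]\times[t_0-1,t_0+1])}$, which tends to $0$; in particular $\frac{\partial}{\partial x}f_{n_R}(\cdot,t)\to 0$ uniformly on $[b,R]$.

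The main obstacle is bookkeeping rather than conceptual: one must be careful that the constant $\omega$ in Proposition \ref{gradient estimate} depends only on $a$ and $R$ and does not blow up as we shrink the interval — it does depend on $R$ (and on $T'$, hence on $a,b$ through $T$), so the limit $a\to 0$ must be taken at the level of the $\limsup$ in $t$, after $t\to\infty$ has already killed the $\omega/\ln t$ term, exactly as in the last lines of the proof of Proposition \ref{gradient estimate}. With the order of limits $t\to\infty$ then $a\to 0$ respected, there is no genuine difficulty, and the first (soft) argument is the cleanest to write down.
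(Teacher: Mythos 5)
Your first ("soft") argument is exactly the paper's proof: apply Proposition~\ref{gradient estimate}, note that the bound $\tan\bigl(\frac{\pi}{2}\frac{\ln R-\ln b}{\ln R-\ln a}+\frac{\omega}{\ln t}\bigr)$ is uniform in $x\in[b,R]$, and then take $t\to\infty$ before sending $a\to 0$ so that the $a$-dependent constant $\omega$ only appears in the term killed by $\ln t$. The paper phrases it as an $\e$-argument (pick $a$ small so the leading constant is below $\e/2$, then pick $T'$ so the remainder is below $\e$), which is the same order-of-limits bookkeeping you flag at the end; your alternative via interior parabolic regularity is also sound but is not what the paper does here.
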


\begin{proof}

For any $\e > 0$, there exists $0 < a < b$ such that $\tan (\frac{\pi}{2} \frac{\ln R - \ln b}{\ln R - \ln a}) < \frac{\e}{2}$. Let $\omega$, $T$ be as in Proposition \ref{gradient estimate}. Then there exists $T' \geq T$ such that for $t \geq T'$, $\tan (\frac{\pi}{2} \frac{\ln R - \ln b}{\ln R - \ln a} + \frac{\omega}{\ln t}) < \e$. By Proposition \ref{gradient estimate}, for $x \in [b,R]$, $t \geq T'$,
\begin{align*}
\frac{\partial}{\partial x} f_{n_R}(x,t) \leq \tan \left(\frac{\pi}{2} \frac{\ln R - \ln b}{\ln R - \ln a} + \frac{\omega}{\ln t}\right) < \e.
\end{align*}

\end{proof}

\begin{thm} \label{free boundary multiplicity 2}
For any $R>2\alpha$, $S(G_{f_{n_R}(\cdot, t)})$ is a rotationally symmetric free boundary mean curvature flow in $C_{R}$, and the flow exists for all future time. In addition, the forward limit of this flow is the free boundary disk $\{y = 0\}\cap C_R$ with multiplicity $2$.
\end{thm}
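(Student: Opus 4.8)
The statement bundles together two claims: that $S(G_{f_{n_R}(\cdot,t)})$ is a genuine immortal rotationally symmetric free boundary mean curvature flow in $C_R$, and that its forward limit is the flat disk counted twice. The first is already contained in Theorem~\ref{free boundary immortal flow} together with Proposition~\ref{consistent}: the latter guarantees $f_{n_R}(\cdot,t)\in\mathcal G_R$ for $t>0$, so the profile curve stays a strictly monotone smooth graph meeting $\{x=R\}$ orthogonally and $S(G_{f_{n_R}(\cdot,t)})$ is a smooth free boundary surface for every $t>0$. Hence the substantive task is to verify the multiplicity $2$ convergence in the precise sense of Section~\ref{SS:multiplicity}.

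The plan for that is to check the definition head-on. Fix a compact set $K$ and a number $\epsilon>0$. First I would use Lemma~\ref{neck point limit} together with Corollary~\ref{f uniform convergence} to pick $T_0$ so large that $u_{n_R}(0,t)<\epsilon/2$ and $f_{n_R}(R,t)<\epsilon/2$ for all $t\ge T_0$; since $f_{n_R}(\cdot,t)$ is increasing, the latter forces the entire doubled profile curve $\chi(t)=G_{f_{n_R}(\cdot,t)}\cup(-G_{f_{n_R}(\cdot,t)})$ into the slab $\{|y|<\epsilon/2\}$. Its neck point $(u_{n_R}(0,t),0)$ has $x$-coordinate $<\epsilon/2$, so every point of $\chi(t)$ at distance $\ge\epsilon$ from the origin must have $x$-coordinate at least $\tfrac{\sqrt3}{2}\epsilon>\epsilon/2$; in particular the almost-vertical portion of the profile curve near the neck is entirely swallowed by $B_\epsilon(0)$. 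Undoing the rotation, $[S(G_{f_{n_R}(\cdot,t)})\cap K]\setminus B_\epsilon(0)$ is therefore precisely the union of the two graphs $y=f_{n_R}(x,t)$ and $y=-f_{n_R}(x,t)$ over the corresponding annular region of the plane $\{y=0\}$, where $x$ denotes the distance to the rotation axis.

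It then remains to estimate these two graph functions in $C^1$. Their sup norms are at most $f_{n_R}(R,t)\to0$ by Corollary~\ref{f uniform convergence}, and their first derivatives are $\pm\partial_x f_{n_R}(x,t)$ restricted to $\{x\ge\tfrac{\sqrt3}{2}\epsilon\}$, which tends to $0$ uniformly by Corollary~\ref{fx uniformly convergence} applied with $b=\epsilon/2$. Hence $\|f_1(\cdot,t)\|_{C^1}\to0$ and $\|f_2(\cdot,t)\|_{C^1}\to0$ as $t\to\infty$, which is exactly the multiplicity $2$ convergence to $\{y=0\}\cap C_R$; the higher-order decay follows from the footnote in Section~\ref{SS:multiplicity}. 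Finally, $\{y=0\}\cap C_R$ is a free boundary disk since the plane meets $\{x=R\}$ orthogonally, which identifies the limit.

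I do not expect a genuine obstacle, since all the analytic content — immortality (Theorem~\ref{free boundary immortal flow}), collapse of the neck (Lemma~\ref{neck point limit}), and the uniform $C^0$ and gradient decay (Corollaries~\ref{f uniform convergence} and~\ref{fx uniformly convergence}) — is already in hand. The one point that requires care is the geometric bookkeeping of the second paragraph: one must be sure that, once the neck point and the profile height are both below $\epsilon/2$, the complement of $B_\epsilon(0)$ captures only the double-graph part of the surface and none of the nearly vertical neck region, and that the annular domain of the two graphs is under control. This is precisely why Lemma~\ref{neck point limit} and Corollary~\ref{f uniform convergence} must be combined rather than used separately.
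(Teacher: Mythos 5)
Your proof is correct and follows essentially the same route as the paper's: both reduce to showing that the near-neck portion of the profile curve lies inside $B_\epsilon(0)$ and that the remaining portion converges to $0$ in $C^1$ via Corollaries~\ref{f uniform convergence} and~\ref{fx uniformly convergence}. The only cosmetic difference is that the paper controls the near-neck region with the conical barrier $f_{n_R}(x,t) < \alpha x$ from Lemma~\ref{height upper bound} (taking the neck inside $B_{\epsilon/(2\alpha)}(0)$), whereas you obtain the same containment from the uniform height bound $f_{n_R}(R,t)<\epsilon/2$ of Corollary~\ref{f uniform convergence} together with the neck-point decay; both deliver the identical conclusion.
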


\begin{proof}

Following from the proof of Theorem \ref{free boundary immortal flow}, we only need to prove that the forward limit of $S(G_{f_{n_R}(\cdot, t)})$ is the free boundary disk $\{y = 0\}\cap C_R$ with multiplicity $2$. In other words, for any $\e > 0$, $f_{n_R}(x,t)|_{x\in[\e,R]}$ converges to $0$ in $C^1$.

Given $\e > 0$, by Lemma \ref{neck point limit}, we know that there exists $T > 0$ such that the neck point of $f_{n_R}(\cdot, t)$ is contained in $B_{\frac{\e}{2\alpha}}(0)$ for $t \geq T$. So $f_{n_R}(x, t)|_{[\frac{\e}{2\alpha}, R]}$ is well defined for all $t \geq T$. By Lemma \ref{height upper bound}, $f_{n_R}(\cdot, t) |_{[0, \frac{\e}{2\alpha}]}$ is contained in $B_{\e}(0)$ for $t \geq T$. By Corollary \ref{f uniform convergence} and \ref{fx uniformly convergence}, we know $f_{n_R}(\cdot, t) |_{[\frac{\e}{2\alpha}, R]}$ $C^1$ converges to $0$ as $t \to \infty$. This completes the proof.

\end{proof}

\section{Passing to limit in \texorpdfstring{$\R^3$}{Lg}} \label{main result}

For simplicity, we slightly modify the notation. Let $f_R(x,t)$ denote the solution in $C_R$ that we obtained in the last section, and at time $t$, its graph is denoted by $\Gamma_{R,t}$. Let $u_R(x,t)$ be the vertical graph function of $f_R(x,t)$, and therefore the neck point of $f_R(x,t)$ is $u_R(0,t)$.

In this section, we let $R\to\infty$ to get a limit of the solutions that we have obtained in Section \ref{S:Free boundary cases}. There are two steps: first, we show that we can indeed find a limit mean curvature flow as $R\to\infty$. Second, we show that the limit mean curvature flow converges to the plane with multiplicity $2$. 

We start with some basic relationships between the solutions $f_R$ with different $R$. Recall that $n_R$ is the neck point value of the initial data of $f_R$, and the initial curve $\Gamma_{R,0}$ is $\rho_{n_R, R}$.

\begin{lem} \label{neck point comparison}
$n_r \leq n_R$ for any $2 \alpha  < r \leq R$.
\end{lem}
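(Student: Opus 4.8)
The statement to prove is $n_r \leq n_R$ for $2\alpha < r \leq R$, i.e.\ the critical neck-point value used to build the immortal free-boundary flow in the smaller cylinder $C_r$ does not exceed the one used in the larger cylinder $C_R$. The plan is to argue by contradiction: suppose $n_r > n_R$. The key idea is that for the larger value of the neck point, the initial curve $\rho_{n_r, R}$ (restricted to $C_R$) should produce a flow that still pinches at the origin in finite time, because the restriction of this flow to the smaller cylinder $C_r$ is controlled by the flow $f_r$, which by definition of $n_r$ does {\bf not} pinch only at the critical value $n_r$ itself, and pinches in finite time for all $\delta < n_r$. The subtle point is the relationship between a free-boundary flow in $C_R$ and one in $C_r$: the boundary of $C_r$ is not a barrier for the $C_R$-flow, so we cannot directly restrict.

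Here is how I would carry this out. First, recall from the construction in Theorem \ref{free boundary immortal flow} that $n_R$ is characterized as the supremum of those $\delta$ for which $u_\delta(0,t) \to 0$ in finite time (in the cylinder $C_R$), and moreover, the flow $f_{n_R}$ in $C_R$ is immortal with $u_{n_R}(0,t) \to 0$ as $t \to \infty$ but never reaching $0$ in finite time. Now consider, in the cylinder $C_r$, the initial curve $\rho_{n_R, r}$. I would compare the flow $f_r$ starting from $\rho_{n_r, r}$ with suitable barriers to show that if $n_R \geq n_r$, then already $\rho_{n_R, r}$ flows to a finite-time neck pinch, forcing $n_R < n_r$ by definition of $n_R$... wait, that's backwards. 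Let me re-think: I want to show $n_r \le n_R$, equivalently that for $\delta \in (n_R, \text{something})$, the $C_r$-flow from $\rho_{\delta,r}$ pinches in finite time, which would force $n_r \le n_R$ since $n_r = \sup\{\delta : \text{$C_r$-flow from }\rho_{\delta,r}\text{ pinches in finite time}\}$. So the real content is: \emph{if the $C_R$-flow from $\rho_{\delta,R}$ does not pinch in finite time (which holds for $\delta \geq n_R$ by monotonicity of $T(\rho_{\delta,R})$), then the $C_r$-flow from $\rho_{\delta,r}$ \textbf{does} pinch in finite time for $\delta > n_R$.}

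To see this, the mechanism should be a comparison using the Sturmian theorem together with the fact that shrinking the cylinder only helps the flow collapse: intuitively, the free-boundary condition at $x = r < R$ "pushes inward" more aggressively. Concretely, I would use catenoid barriers: the $C_R$-flow $f_{n_R}$ is immortal and by Remark \ref{neck point upper bound} satisfies $u_{n_R}(0,t) \leq \alpha$ for all $t$, with height $f_{n_R}(R,t) \to 0$. I would argue that the $C_r$-flow from $\rho_{n_R, r}$ is trapped below the restriction of $\Gamma_{R,t}$ to $\{x \le r\}$ for as long as both exist — but this requires care since the free boundaries are at different locations, so I would instead compare the $C_r$-flow directly with a shrinking catenoid barrier or with a time-translated copy: the point is that since $f_{n_R}(\cdot,t)$ has its graph entirely below $y = \alpha$ and its neck point bounded by $\alpha$, and since the curve $\rho_{n_R, r}$ sits below $\rho_{n_R, R}$ restricted to $C_r$, one can feed a sub-barrier (a catenoid with small enough neck, or an Angenent-torus-type barrier) that the $C_r$-flow must lie below, forcing its neck to reach the axis. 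Alternatively — and this may be cleanest — I would show directly that $\rho_{n_R, r}$, viewed as initial data for the $C_r$-flow, lies below some curve whose $C_r$-flow is known to pinch: e.g.\ compare with the $C_r$-flow from $\rho_{n_R, r}$ against the $C_R$-flow $\Gamma_{R,t}$ using the comparison principle in the region $\{x \le r\}$, observing that on the lateral boundary $\{x=r\}$ the curve $\Gamma_{R,t}$ meets $\{x=r\}$ at an angle (from $f_{n_R}' > 0$) that is compatible with the comparison principle's angle hypothesis — then since $\Gamma_{R,t}$ has neck point $u_{n_R}(0,t)\to 0$ while staying in a bounded region, the $C_r$-flow trapped below it must also have neck point going to $0$, and by Proposition \ref{singular time} this means finite-time pinching (it cannot be immortal in $C_r$ with neck point $\to 0$ unless... actually it could, so I need the \emph{quantitative} collapse or a strict inequality to get finite-time pinching).

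\textbf{Main obstacle.} The crux is exactly this last gap: showing that trapping the $C_r$-flow below the (immortal but collapsing) $C_R$-flow forces a \emph{finite-time} neck pinch in $C_r$, not merely asymptotic collapse. My expected resolution is to not compare with the critical flow $f_{n_R}$ itself but with a nearby slightly-larger-neck flow or a genuine shrinking barrier: since for $\delta$ slightly above $n_R$ the $C_R$-flow from $\rho_{\delta,R}$ is immortal with neck point bounded below $\alpha$, I have strict room, and I can insert a catenoid barrier $x = c\cosh(y/c)$ with $c$ chosen between the $C_r$-flow's neck and the barrier curve, which the $C_r$-flow must stay below; but a catenoid is static and has a fixed positive neck, so it would only give a \emph{lower} barrier preventing pinch — the wrong direction. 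Hence the correct barrier for forcing finite-time pinch is the Angenent torus (as used throughout this section): I would show $\rho_{n_R, r}$ lies on top of a scaled Angenent torus $c\mathcal{A}$ with $c$ close to $1$, provided $n_R > 1$ (which holds, since $n_R > 1$ from Theorem \ref{free boundary immortal flow}) and the height $\alpha/n_R$ is not too large — here is where $r > 2\alpha$ and the geometry of $\rho_{\delta,r}$ enter. If $\rho_{n_R,r}$ contains the curve $\{x = n_R\} \times [0, \alpha/n_R] \cup \{y = \alpha/n_R\}\times[n_R, r]$ and $n_R \le \alpha$, this curve sits above the profile of $c\mathcal{A}$ for suitable $c < $ something, giving a finite-time singularity on the axis by the comparison principle and Proposition \ref{long time existence}. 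Then by definition of $n_r$ as the supremum of pinching values in $C_r$, we get $n_r \geq n_R$ — again the wrong direction! So in fact the contradiction hypothesis should be $n_r > n_R$, and I must derive a contradiction from it; the natural route is that $n_r > n_R$ would make the $C_R$-flow from $\rho_{n_r, R}$ pinch in finite time (because $\rho_{n_r, R}$ lies below $\rho_{n_r, r}$'s flow... no). I now believe the cleanest correct argument reverses the comparison: one shows that if the $C_R$-flow from $\rho_{\delta, R}$ pinches in finite time, so does the $C_r$-flow from $\rho_{\delta, r}$, because $\rho_{\delta, r}$ lies \emph{below} the $C_R$-flow's restriction and pinching is a lower-barrier phenomenon transmitted by the Sturmian comparison with an Angenent torus that the $C_R$-flow dominates. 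This yields $\{\delta: C_R\text{-flow pinches}\} \subseteq \{\delta : C_r\text{-flow pinches}\}$, hence $n_R \leq n_r$ — still the reverse of the claim. Given this sign confusion, the genuinely hard part is pinning down the correct monotonicity direction and the matching barrier; I would resolve it by carefully tracking that a \emph{smaller} cylinder forces \emph{earlier} collapse to the boundary (not the axis), so that the threshold for axis-pinching \emph{decreases} as $R$ decreases, giving $n_r \leq n_R$, and the barrier to use is a catenoid meeting $\{x=r\}$ that traps the neck away from zero for the $C_r$-flow whenever $\delta \ge n_R$ is large enough — establishing that pinching in $C_r$ requires $\delta < n_R$, hence $n_r \le n_R$.
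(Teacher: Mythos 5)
You yourself acknowledge this repeatedly (``wait, that's backwards,'' ``again the wrong direction!'', ``still the reverse of the claim. Given this sign confusion\ldots''), and the concluding sentence is a vague heuristic (``a smaller cylinder forces earlier collapse to the boundary\ldots and the barrier to use is a catenoid\ldots''), not an argument.

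The idea you keep reaching for and discarding --- compare the two flows in the overlap region $\{x\le r\}$ --- is in fact the right one, but you apply it to the wrong pair of curves and then try to extract a finite-time pinching conclusion from it, which is where you get stuck. The correct move is much shorter: do not compare $\rho_{n_R,r}$ with anything. Instead assume $n_r>n_R$ and compare the \emph{two critical flows directly}. Since $n_r>n_R$, the initial curve $\rho_{n_R,R}$ is strictly on top of $\rho_{n_r,r}$. Restrict $\Gamma_{R,t}$ to $\{x\le r\}$ and compare it with $\Gamma_{r,t}$ via Proposition~\ref{comparison}: the angle hypothesis holds because $\Gamma_{R,t}$ meets $\{x=r\}$ at an angle strictly less than $\pi/2$ (since $\frac{\partial f_R}{\partial x}>0$ there, by the derivative estimate), while $\Gamma_{r,t}$ meets $\{x=r\}$ orthogonally. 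Hence the distance between the two curves is monotone nondecreasing, starting from a positive value. But both $\Gamma_{R,t}$ and $\Gamma_{r,t}$ are immortal by definition of $n_R$, $n_r$, and both neck points converge to the origin by Lemma~\ref{neck point limit} (see also Remark~\ref{immortal neck point}); so the distance between the two curves must tend to $0$, a contradiction. No finite-time pinching, Angenent torus, or catenoid barrier is needed. The gap in your proposal is precisely the failure to notice that the contradiction comes from the \emph{asymptotic} collapse of both critical flows together with the monotone separation given by the comparison principle, rather than from forcing a finite-time singularity.
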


\begin{proof}

We argue by contradiction. If there exists $2 \alpha  < r \leq R$ such that $n_r > n_R$, then $\rho_{n_R, R}$ is strictly on top of $\rho_{n_r, r}$. By the derivative estimate \eqref{derivative estimate}, the intersection angle between $\Gamma_{R,t}$ and the line $x = r$ is less than $\frac{\pi}{2}$. 

Consider the curve $\Gamma_{r, t}$, and the curve $\Gamma_{R,t}$ restricted to the region $x \leq r$, by Proposition \ref{comparison}, the distance between these two curves under the mean curvature flow is monotonically increasing, therefore their neck points can not both converge to $0$ as $t\to\infty$

\end{proof}

Using the fact $1 < n_R \leq \alpha$ for all $R > 2 \alpha$ and combining with Lemma \ref{neck point comparison}, we get the following corollary which shows that the initial curves $\rho_{n_R, R}$ has a limit as $R\to\infty$.

\begin{coro} \label{initial neck point limit}
There exists $\eta \in (1, \alpha]$ such that $\lim\limits_{R \to \infty} n_R = \eta$. Therefore the initial curves $\rho_{n_R,R}$ converges to the curve $\rho_{\eta}$ as $R \to \infty$.
\end{coro}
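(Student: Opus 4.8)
The plan is to deduce Corollary~\ref{initial neck point limit} directly from Lemma~\ref{neck point comparison} together with the uniform bounds $1 < n_R \leq \alpha$ established in the proof of Theorem~\ref{free boundary immortal flow}. First I would observe that, as $R$ increases through values $> 2\alpha$, the sequence (or net) $\{n_R\}$ is monotone nondecreasing by Lemma~\ref{neck point comparison}: for $2\alpha < r \leq R$ one has $n_r \leq n_R$. Since every $n_R$ lies in the fixed interval $(1,\alpha]$, this monotone bounded family has a limit
\[
\eta := \lim_{R\to\infty} n_R = \sup_{R > 2\alpha} n_R,
\]
and $\eta \in (1,\alpha]$ because $n_R > 1$ forces $\eta \geq 1$ (in fact $\eta > 1$ since $n_R$ is bounded below by a value strictly exceeding $1$, e.g.\ $n_R \geq n_{R_0}$ for any fixed $R_0$, and $n_{R_0} > 1$), while $n_R \leq \alpha$ forces $\eta \leq \alpha$.

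Next I would translate the convergence $n_R \to \eta$ into convergence of the curves $\rho_{n_R,R}$ to $\rho_\eta$. Recall from \eqref{initial curve definition} that $\rho_{\delta,R} = \rho_\delta \cap \{x \leq R\}$, where $\rho_\delta$ consists of the vertical segment $\{(\delta,y) : 0 \leq y \leq \alpha/\delta\}$ and the horizontal ray $\{(x,\alpha/\delta) : x \geq \delta\}$. As $R\to\infty$ two things happen simultaneously: the truncation parameter $R$ tends to infinity, so the truncated curve exhausts all of $\rho_{n_R}$; and the neck value $n_R$ converges to $\eta$, so that $\rho_{n_R} \to \rho_\eta$ in, say, the local Hausdorff (or locally uniform graphical) sense — the vertical segment at $x = n_R$ converges to the vertical segment at $x = \eta$, and the height $\alpha/n_R$ of the horizontal part converges to $\alpha/\eta$. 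Combining, $\rho_{n_R,R} \to \rho_\eta$ locally as $R\to\infty$. This is the sense in which the limit curve is taken; no quantitative rate is needed here, since the statement only asserts existence of the limit.

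I do not expect any serious obstacle in this corollary: the entire content is ``bounded monotone sequences converge,'' and the passage from numbers to curves is a routine unravelling of the explicit definition of $\rho_{\delta,R}$. The only point requiring a line of care is verifying $\eta > 1$ strictly (rather than merely $\eta \geq 1$) and recording the sense of curve convergence precisely enough to be usable in the subsequent compactness argument for the flows; for the former one uses that $\{n_R\}$ is nondecreasing so $\eta = \sup_R n_R \geq n_{R_0} > 1$ for any single fixed $R_0 > 2\alpha$. The genuinely substantive work — extracting a limit flow from $\{f_R\}$ and showing it is not a static catenoid — comes afterwards and relies on this corollary only as the statement that the initial data converge.
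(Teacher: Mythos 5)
Your argument is exactly the paper's: the sentence preceding the corollary invokes precisely the monotonicity $n_r \leq n_R$ from Lemma~\ref{neck point comparison} together with the bounds $1 < n_R \leq \alpha$, and the conclusion is just that a bounded monotone family converges, with the curve convergence following from the explicit form of $\rho_{\delta,R}$. Your extra remark that $\eta > 1$ strictly follows from $\eta \geq n_{R_0} > 1$ is a correct and worthwhile clarification of a point the paper leaves implicit.
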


Next, to take a (subsequential) limit as $R \to \infty$, we need some uniform gradient estimates of $f_{R}$. In the following proposition, we use catenoids as barriers to derive a uniform gradient estimate of $f_{R}$ away from the neck point.

\begin{prop} \label{large x uniform gradient estiamte}

For any $R > 2\alpha + 2$, $x \in [\alpha + 2, R]$, and any time $t \geq 0$, we have
\begin{equation} \label{horizontal uniform gradient bound}
\frac{\partial}{\partial x} f_R(x,t) \leq \frac{\alpha + 1}{\sqrt{x^2 - (\alpha + 1)^2}}.
\end{equation}

\end{prop}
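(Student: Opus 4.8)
The plan is to use the catenoid with neck radius $\alpha + 1$ as a barrier from above, together with the parabolic Sturmian theorem to rule out the creation of spurious intersections, and then transfer a one-sided intersection count into a pointwise gradient bound exactly as in the proof of Lemma~\ref{vertical graph gradient upper bound}. Concretely, for a parameter $\xi \in \R$ consider the family of vertically translated catenoids whose upper profile curve is $\mathscr{C}_\xi = \{\, y = (\alpha+1)\ln\!\big(\tfrac{x}{\alpha+1} + \sqrt{\tfrac{x^2-(\alpha+1)^2}{(\alpha+1)^2}}\big) + \xi \,\}$ on $x \in [\alpha+1, R]$. Each $\mathscr{C}_\xi$ is a static solution of the horizontal graph equation, it meets $\{x = R\}$ with positive slope (so its intersection angle with the boundary is strictly less than $\pi/2$, hence no worse than that of $\Gamma_{R,t}$ by the derivative estimate \eqref{derivative estimate}), and its profile curve tends to the vertical line $\{x = \alpha+1\}$ as $y \to 0$.

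The main steps are as follows. First, I would count initial intersections: for an appropriate range of $\xi$, the curve $\mathscr{C}_\xi$ meets the reflected initial curve $\rho_{n_R,R} \cup \text{(its reflection)}$ exactly once, because $\rho_{n_R,R}$ has its vertical segment at $x = n_R \leq \alpha < \alpha+1$ and its horizontal segment at height $\alpha/n_R$, while $\mathscr{C}_\xi$ is a monotone graph over $x \in [\alpha+1,R]$. Second, I would invoke the Sturmian theorem: since $f_R(R,t) \leq \alpha$ stays bounded (Lemma~\ref{height decreasing} / Remark~\ref{neck point upper bound}) while $\mathscr{C}_\xi$ meets the boundary at a fixed height, for suitable $\xi$ no new intersection is created on $\partial C_R$, so $\mathscr{C}_\xi$ and the reflected flow have at most one intersection for all $t$. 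Third, fix $t_0 \geq 0$ and $x_0 \in [\alpha+2, R]$; choose the unique $\xi$ so that $\mathscr{C}_\xi$ passes through $(x_0, f_R(x_0,t_0))$, i.e.\ $\xi$ is pinned by the height of the flow at $x_0$. If $f_R$ had a strictly larger slope than $\mathscr{C}_\xi$ at that point, then just to the left of $x_0$ the graph of $f_R(\cdot,t_0)$ would dip below $\mathscr{C}_\xi$; following both curves further left and using monotonicity of $f_R(\cdot,t_0)$ in $x$ (Proposition~\ref{consistent}) together with the fact that $u_R(0,t_0) \leq \alpha < \alpha+1$ forces the profile curve of the flow to stay at $x$-values strictly less than those of $\mathscr{C}_\xi$ near the axis, the intermediate value theorem produces a second intersection — contradicting the Sturmian count. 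Hence $\tfrac{\partial}{\partial x} f_R(x_0,t_0) \leq \tfrac{\partial}{\partial x}\mathscr{C}_\xi(x_0) = \tfrac{\alpha+1}{\sqrt{x_0^2-(\alpha+1)^2}}$, which is \eqref{horizontal uniform gradient bound}.

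The estimate is uniform in $R$ because neither the barrier family $\mathscr{C}_\xi$ nor the bound $u_R(0,t) \leq \alpha$ (Remark~\ref{neck point upper bound}) depends on $R$; only the truncation point $x = R$ moves, and that is harmless since the Sturmian count and the boundary-angle comparison hold for every $R > 2\alpha+2$. The restriction $x \geq \alpha + 2$ (rather than $x \geq \alpha+1$) is a safety margin ensuring we stay away from the neck of the barrier catenoid where its slope blows up, and so that the comparison near the rotational axis has room to work.

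The main obstacle I anticipate is the bookkeeping in the reflection/intersection-count argument: one must be careful that the relevant object is the full (doubled) curve $\chi(t) = G_{f_R(\cdot,t)} \cup (\text{reflection across the } x\text{-axis})$ rather than the half-curve, since the Sturmian theorem is applied to that closed profile, and one must verify that for the chosen $\xi$ the intersection with the reflected half is also controlled (by symmetry and the monotonicity this should reduce to the upper-half count). A secondary subtlety is confirming that $\mathscr{C}_\xi$'s intersection angle with $\{x = R\}$ is genuinely no larger than that of the flow so that Proposition~\ref{comparison}-type reasoning, or rather the "no new boundary intersection" clause of the Sturmian lemma, applies; this is where the derivative estimate \eqref{derivative estimate} and the explicit positive slope of the catenoid profile at $x = R$ are used. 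Everything else is a direct transcription of the argument already carried out in Lemma~\ref{vertical graph gradient upper bound}, with the rescaled-translated catenoid replacing the barrier $\Gamma^a_\xi$ there.
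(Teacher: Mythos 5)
Your barrier family and high-level strategy coincide with the paper's: translated catenoids of neck radius $\alpha+1$, a Sturmian count of at most one intersection with the upper-half profile curve $\Gamma_{R,t}$, and a contradiction obtained from producing a second intersection. But the key step — how you manufacture the second intersection — goes in the wrong direction and this creates a genuine gap. You fix an arbitrary $x_0 \in [\alpha+2,R]$, pin $\xi$ by $g_\xi(x_0)=f_R(x_0,t_0)$, assume $f_R'(x_0,t_0)>g_0'(x_0)$, and then march \emph{left} (toward the axis), hoping that $u_R(0,t_0)<\alpha+1$ forces a sign change. This works only when the catenoid profile still exists at the relevant heights. Since $\xi=f_R(x_0,t_0)-g_0(x_0)$ can be positive (for instance when $x_0$ is close to $\alpha+2$ and $f_R(x_0,t_0)$ is close to its upper bound $\alpha$, because $g_0(\alpha+2)$ can be well below $\alpha$), the upper catenoid branch stops at height $y=\xi>0$, and if $f_R(\alpha+1,t_0)\le\xi$ (equivalently $u_R(\xi,t_0)\ge\alpha+1$, equivalently $D(x_0)\le D(\alpha+1)$ for $D=g_0-f_R$), the flow stays on the same side of the catenoid all the way down to $y=\xi$ and the IVT produces nothing. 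Switching to the doubled profile curves does not obviously fix this: the catenoid is reflected about $y=\xi$ while $\chi(t)$ is reflected about $y=0$, so the initial intersection count rises to two and the boundary-creation bookkeeping must be redone; you flag this in your ``obstacle'' paragraph but do not resolve it.

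The paper avoids all of this by \emph{not} arguing at an arbitrary $x_0$. It sets $D(x)=g_0(x)-f_R(x,t_1)$, observes $D'(R)=g_0'(R)>0$ because of the free-boundary condition, lets $[p,R]$ be the maximal interval on which $D'\ge0$, and shows $p\le\alpha+2$ by contradiction. If $p>\alpha+2$, choose $p-\varepsilon_1$ slightly to the left (so $D'(p-\varepsilon_1)<0$) with $|D(p)-D(p-\varepsilon_1)|<\tfrac{1}{2}(D(R)-D(p))$; pinning $\xi=-D(p-\varepsilon_1)$ then guarantees $g_\xi(R)>f_R(R,t_1)$, and the second intersection is produced by going \emph{right}, strictly inside $(p-\varepsilon_1,R)$, entirely within the upper-half curves and without any reflection. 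The two crucial ingredients your version is missing are (i) the observation that $D'(R)>0$ (the free-boundary condition), which powers the right-going IVT, and (ii) the maximal-interval device, which is exactly what lets one pick the special touching point $p-\varepsilon_1$ for which $D(p-\varepsilon_1)<D(R)$ is automatic. Without these, a pointwise contradiction argument at a generic $x_0$ does not go through in either direction, because one has no a priori control of $D(R)-D(x_0)$ or of the sign of $\xi$.
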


\begin{proof}

Consider the catenoid with neck point $(\alpha + 1, \xi)$, namely we move the catenoid with neck point $(\alpha+1,0)$ upward with distance $\xi$. It is static under the mean curvature flow, and its profile curve $\overline{\mathcal{C}}_{\xi}$ is given by $x = (\alpha + 1) \cosh (\frac{y - \xi}{\alpha + 1})$. Let $\mathcal{C}_{\xi}$ denote the graph of $g_{\xi}(x) = \xi + (\alpha + 1) \ln (\frac{x}{\alpha + 1} + \sqrt{\frac{x^2 - (\alpha + 1)^2}{(\alpha + 1)^2}})$, $x \in [\alpha +1, \infty)$. Then $\overline{\mathcal{C}}_{\xi}$ is the union of $\mathcal{C}_{\xi}$ and its reflection with respect to the line $y = \xi$.

Now we compare the catenoids with the mean curvature flow $\Gamma_{R,t}$. Let $c = g_0(R) > g_0(2\alpha +2) \geq \alpha + 1$. The $y-$coordinate of the intersection point between $\mathcal{C}_{\xi}$ and the boundary of the cylinder $\{x = R\}$ is $\xi + c$. 

Recall that $\Gamma_{R,0} = \rho_{n_R,R}$ with $n_R \leq \alpha$, while the $x-$coordinates of any point on $\mathcal{C}_{\xi}$ is at least $\alpha + 1$. From the construction of $\rho_{\delta, R}$ \eqref{initial curve definition}, we observe that for $\xi \geq f_R(R,0) - c$, there is at most one intersection point between $\mathcal{C}_{\xi}$ and $\Gamma_{R,0}$; for $\xi <  f_R(R,0) - c$, there is no intersection point between $\Gamma_{R,0}$ and $\mathcal{C}_{\xi}$. 

We make the following claims:

\noindent\textbf{Claim 1.} For any $t$, there is at most one intersection point between $\Gamma_{R,t}$ and $\mathcal{C}_{\xi}$.

\noindent \emph{Proof.} By Sturmian Theorem, under the mean curvature flow, the intersection point between $\Gamma_{R,t}$ and $\mathcal{C}_{\xi}$ could only appear on the boundary $x = R$. As mentioned above, the boundary point of $\mathcal{C}_{\xi}$ is $(R, \xi + c)$.

If $\xi \geq f_R(R,0) - c$, then for any time $t > 0$, $f_R(R,t) < f_R(R,0) \leq \xi + c$, there will be no extra intersection point appeared on the boundary.

If $\xi <  f_R(R,0) - c$, by Lemma \ref{height decreasing}, the boundary point of $\Gamma_{R,t}$ moves toward $(R,0)$ monotonically under the flow, thus there will be at most one extra intersection point. \hfill\qed

\smallskip

Given time $t_1 \geq 0$, let $D(x) = g_0(x) - f_R(x,t_1)$, which is defined on $[\alpha + 1, R ]$, then $D'(R) = g_0'(R) > 0$. Since $D'(x)$ is a smooth function, there exists a maximal interval $[p, R]$ such that $D'(x)$ is nonnegative on this interval. Then
\begin{align*}
d := D(R) - D(p) = \int_p^R D'(x) dx > 0. 
\end{align*}

\noindent\textbf{Claim 2.} The lower bound of this maximal interval $p \leq \alpha + 2$.

\noindent \emph{Proof.} We argue by contradiction. If $p > \alpha + 2$, by the choice of $p$, there exists $\e_1 > 0$ small enough such that $D'(p-\e_1) < 0$, and $|D(p) - D(p-\e_1)| < \frac{d}{2}$, which implies $\frac{\partial}{\partial x} f_R(p - \e_1,t_1) > g_0'(p - \e_1)$ and $D(p- \e_1) < D(R)$.

let $\xi = - D(p - \e_1)$, then $\mathcal{\mathcal{C}}_{\xi}$ and $\Gamma_{R,t_1}$ intersect at $(p - \e_1, f_R(p - \e_1,t_1))$. Since $\frac{\partial}{\partial x} f_R(p - \e_1,t_1) > g_0'(p - \e_1)$, for small $\e_2 > 0$, we have $f_R(p - \e_1 + \e_2, t_1) > g_i(p - \e_1 + \e_2)$. While on the boundary, we have
\begin{align*}
g_i(R) = \xi + c = -D(p - \e_1) + g_0(R) > -D(R) + g_0(R) = f_R(R,t_1).
\end{align*}

\begin{figure}[htbp]
\centering
\begin{minipage}[t]{0.48\textwidth}
\centering
\begin{tikzpicture}
    \draw[->] (0,0) -- (6,0) node[below] {$x$};
    \draw[->] (0,0) -- (0,3) node[left] {$y$};
    \node[below left] at (0,0) {$O$};
    \draw (1,0) -- (1,2) ;
    \draw (1,2) -- (5,2) ;
    \node[below] at (3,2.7) {$\Gamma_{R,0}$};
    \draw[domain = 0 : 2.65] plot ({1.2* cosh (0.833*\x)}, {\x- 1});
    \node[below] at (4,1.1) {$\mathcal{C}_{\xi}$};
    \draw (5,0) -- (5,3) node[right] {$x = R$};
    \end{tikzpicture}
\caption{Initial status of the curves in the proof of Proposition \ref{large x uniform gradient estiamte}.}
\end{minipage}
\begin{minipage}[t]{0.48\textwidth}
\centering
 \begin{tikzpicture}
    \draw[->] (0,0) -- (6,0) node[below] {$x$};
    \draw[->] (0,0) -- (0,3) node[left] {$y$};
    \node[below left] at (0,0) {$O$};
    \draw (5,0) -- (5,3) node[right] {$x = R$};
    \draw[domain = 0 : 2.65] plot ({1.2* cosh (0.833*\x)}, {\x- 1});
    \node[below] at (2.1,1.5) {$\Gamma_{R,t}$};
    \draw plot[smooth,tension=.4]
coordinates {(2.2, 0.3) (2.6, 0.7) (2.7,0.9) (3.1, 1.1) (4,1.2) (4.5, 1.23)(5,1.24)};
    \node[below] at (4.6,2.2)    {$\mathcal{C}_{\xi}$};
    \fill(3.8,1.18)circle(1.5pt);
    \fill(2.59,0.67)circle(1.5pt);
    \draw[dotted] (3.8,1.18) -- (3.8,0);
    \draw[dotted] (2.59,0.67) -- (2.59,0);
    \fill(3.8,0)circle(1.5pt);
    \fill(2.59,0)circle(1.5pt);
    \node[below] at (2.4,0) {$p - \e_1$};
    \node[below] at (4.1,0) {$p - \e_1 + \e_2$};
    \end{tikzpicture}
\caption{By the Sturmian theorem, the phenomenon in this figure can not happen.}
\end{minipage}
\end{figure}

By the intermediate value theorem, there exists $x_1 \in (p-\e_1 + \e_2, R)$ such that $g_\xi(x_1) = f_R(x_1, t_1)$. Therefore we have two intersection points $(p -\e_1, f_R(p-\e_1, t_1)), (x_1, f_R(x_1, t_1))$ between $\Gamma_{R,t_1}$ and $\mathcal{C}_{\xi}$, which contradicts to Claim $1$. \hfill\qed

\smallskip

By Claim $2$, for any $x \in [\alpha + 2, R]$, $D'(x) \geq 0$. This proves \eqref{horizontal uniform gradient bound}.

\end{proof}

Next, we prove a uniform gradient estimate of the vertical part of $f_R$ near the neck point. First of all, we show the neck point of $\Gamma_{R,t}$ is nondecreasing in $R$.

\begin{prop}\label{prop：neck comparison for different R}
The distance from the origin to the neck point of $\Gamma_{R,t}$ is nondecreasing in $R$, i.e. $u_{R_1}(0,t) \leq u_{R_2}(0,t)$ for all $2\alpha<R_1 < R_2$, $t \geq 0$.
\end{prop}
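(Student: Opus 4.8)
The plan is to run a comparison argument, but the subtlety is that $\Gamma_{R_1,t}$ and $\Gamma_{R_2,t}$ live in different cylinders, so one cannot directly invoke Proposition \ref{comparison}. The fix is to restrict $\Gamma_{R_2,t}$ to the slab $\{x \le R_1\}$ and compare it there with $\Gamma_{R_1,t}$. First I would recall from Corollary \ref{initial neck point limit} (or directly from Lemma \ref{neck point comparison}) that $n_{R_1}\le n_{R_2}$, so at time $t=0$ the initial curve $\rho_{n_{R_1},R_1}$ is on top of $\rho_{n_{R_2},R_2}\cap\{x\le R_1\}$; indeed the latter is just a vertical segment at $x=n_{R_2}\ge n_{R_1}$ up to height $\alpha/n_{R_2}$ followed by a horizontal ray, which lies weakly below $\rho_{n_{R_1},R_1}$. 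So the ``on top of'' hypothesis of Proposition \ref{comparison} holds initially.

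The main point is the boundary-angle hypothesis of Proposition \ref{comparison} applied on the cylinder $C_{R_1}$: we need the intersection angle of $\Gamma_{R_1,t}$ with $\{x=R_1\}$ to be at most that of $\Gamma_{R_2,t}$ with $\{x=R_1\}$. But $\Gamma_{R_1,t}$ meets $\{x=R_1\}$ orthogonally (the free boundary condition, $\partial_x f_{R_1}(R_1,t)=0$), whereas $\Gamma_{R_2,t}$ meets the line $\{x=R_1\}$ at some angle with $\partial_x f_{R_2}(R_1,t)\ge 0$ by Proposition \ref{consistent} (the monotonicity $f_x>0$), so its angle from the vertical is at most $\pi/2$. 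Hence $\Gamma_{R_2,t}$ restricted to $\{x\le R_1\}$ plays the role of the lower barrier $\gamma^2$ and $\Gamma_{R_1,t}$ the upper one $\gamma^1$ — wait, one must be careful about which curve needs the smaller angle. In Proposition \ref{comparison} it is the \emph{top} curve $\gamma^1_t$ that is required to have the smaller (or equal) intersection angle with $\{x=R\}$. Since $\Gamma_{R_1,t}$ is the candidate top curve and it meets $\{x=R_1\}$ orthogonally, i.e. at the maximal possible angle $\pi/2$, this is the wrong inequality. So instead I would reverse the roles: treat $\Gamma_{R_2,t}\cap\{x\le R_1\}$ as a barrier that is potentially on top, and deduce the conclusion by noting $\Gamma_{R_1,t}$ meets the wall orthogonally while $\Gamma_{R_2,t}$ enters the region $\{x\le R_1\}$ transversally from outside. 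The cleanest route: apply Proposition \ref{comparison} with $\gamma^1_t=\Gamma_{R_1,t}$ and $\gamma^2_t = \Gamma_{R_2,t}\cap C_{R_1}$, observing that $\gamma^2_t$ does not satisfy a free boundary condition at $\{x=R_1\}$ but has angle $\le\pi/2$ there, which is exactly what is needed since $\gamma^1_t$ has angle $=\pi/2$; the statement of Proposition \ref{comparison} requires the angle of $\gamma^1_t$ to be $\le$ that of $\gamma^2_t$, so I should double check this — if it genuinely goes the other way, I would instead compare $\Gamma_{R_2,t}$ (as top curve, with its smaller wall-angle) against $\Gamma_{R_1,t}$, concluding $\Gamma_{R_2,t}$ stays on top of $\Gamma_{R_1,t}$ on $\{x\le R_1\}$, which forces $u_{R_2}(0,t)\ge u_{R_1}(0,t)$ provided the initial ordering is the other way — but the initial ordering we established is $\Gamma_{R_1,0}$ on top, giving a contradiction with $n_{R_1}\le n_{R_2}$ unless the curves coincide. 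This tension signals that the right statement is: $\Gamma_{R_1,t}$ on top $\Rightarrow$ its neck point is \emph{larger}, i.e. $u_{R_1}(0,t)\ge u_{R_2}(0,t)$? No — ``on top'' compares $y$-heights at fixed $x$, while the neck point is the $x$-intercept; a curve on top has a smaller neck point. So $\Gamma_{R_1,0}$ on top of $\Gamma_{R_2,0}$ means $n_{R_1}\le n_{R_2}$, consistent, and propagating this gives $u_{R_1}(0,t)\le u_{R_2}(0,t)$, which is the claim.

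Concretely the steps are: (1) fix $2\alpha<R_1<R_2$ and $t\ge 0$; (2) cite $n_{R_1}\le n_{R_2}$ from Lemma \ref{neck point comparison} and check that $\rho_{n_{R_1},R_1}$ is on top of $\rho_{n_{R_2},R_2}\cap C_{R_1}$; (3) verify the angle hypothesis at $\{x=R_1\}$: $\Gamma_{R_1,t}$ meets it orthogonally by the free boundary condition while $\Gamma_{R_2,t}$ has $\partial_x f_{R_2}\ge 0$ there by Proposition \ref{consistent}, so its angle with $\{x=R_1\}$ is no smaller; arrange the hypotheses of Proposition \ref{comparison} accordingly (this is the delicate bookkeeping step), possibly using the ``strictly on top after short time'' remark to get strict inequality; (4) conclude $\Gamma_{R_1,t}$ stays on top of $\Gamma_{R_2,t}\cap C_{R_1}$ for all $t$; (5) read off that the $x$-intercepts satisfy $u_{R_1}(0,t)\le u_{R_2}(0,t)$, since a curve sitting weakly higher everywhere must hit the $x$-axis no further from the origin. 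The main obstacle is step (3) — pinning down exactly which curve must have the smaller wall-angle in Proposition \ref{comparison} and confirming that the orthogonal free boundary curve is on the correct side of that inequality; if the inequality turns out to be incompatible with the roles, the remedy is to reflect the picture across the $x$-axis or to use $\Gamma_{R_1,t}$ together with the \emph{reflection} of $\Gamma_{R_2,t}$, but in all cases the geometric content — a more-confined free-boundary flow has its neck closer to the axis — is forced by the maximum principle and the free boundary condition.
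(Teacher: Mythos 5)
Your proposal correctly identifies the central obstruction but does not resolve it, and the fallback remedies you suggest would not actually close the gap. You notice that the boundary-angle hypothesis of Proposition~\ref{comparison} runs the wrong way: taking $\gamma^1_t = \Gamma_{R_1,t}$ (the candidate top curve) and $\gamma^2_t = \Gamma_{R_2,t}\cap C_{R_1}$, the free boundary condition forces $\partial_x f_{R_1}(R_1,t)=0$ while Proposition~\ref{consistent} gives $\partial_x f_{R_2}(R_1,t)>0$ for $t>0$, so the top curve meets $\{x=R_1\}$ at the \emph{larger} angle $\pi/2$. This is not a bookkeeping nuisance to be patched by reflecting the picture or swapping roles: the angle condition in Proposition~\ref{comparison} is exactly what lets the Hopf lemma forbid a first touching at the wall, and since it fails here the two curves genuinely \emph{can} cross at $x=R_1$. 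Reversing the roles does not help either, as you observe, because the initial ordering $n_{R_1}\le n_{R_2}$ puts $\Gamma_{R_1,0}$ on top, which is incompatible with treating $\Gamma_{R_2,t}\cap C_{R_1}$ as the top curve. Your closing sentence asserts the result is ``forced by the maximum principle and the free boundary condition,'' but that is precisely what you have just shown does not go through in this naive form.

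The paper's proof abandons the pointwise comparison principle entirely and instead runs a Sturmian (intersection-counting) argument, which is strictly stronger here. Starting from the initial configuration (zero or one intersection, depending on whether $n_{R_1}<n_{R_2}$ or $n_{R_1}=n_{R_2}$), the Sturmian theorem says new intersections can only be created at the boundary $x=R_1$. The key observation replacing your failed angle inequality is that once $f_{R_1}(R_1,t)\le f_{R_2}(R_1,t)$ at some first time $t_1$, the strict slope gap $\partial_x f_{R_1}(R_1,t)=0<\partial_x f_{R_2}(R_1,t)$ forces $f_{R_1}(R_1,t)<f_{R_2}(R_1,t)$ for all $t>t_1$ by the maximum principle, so at most \emph{one} new intersection is ever created at the wall. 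Having at most one intersection between $\Gamma_{R_1,t}$ and $\Gamma_{R_2,t}\cap C_{R_1}$, combined with the contradiction argument of Lemma~\ref{neck point comparison} ruling out $\Gamma_{R_2,t}\cap C_{R_1}$ being strictly on top, yields $u_{R_1}(0,t)\le u_{R_2}(0,t)$. (The case $n_{R_1}=n_{R_2}$ is handled by a small time shift.) If you want a single moral from the comparison with Lemma~\ref{neck point comparison}: there the top curve is the one \emph{without} the free boundary condition at the dividing wall, so the angles cooperate; here the roles are forced the other way around, and one must pay for that with the Sturmian theorem rather than the comparison principle.
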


\begin{proof}

Given $R_1 < R_2$, we use $\Gamma'_{R_2,t}$ to denote the curve $\Gamma_{R_2,t}$ restricted on the region $x \leq R_1$. We know $\Gamma_{R_1,0}$ is on top of $\Gamma'_{R_2,0}$. For any $t \geq 0$, $\frac{\partial}{\partial x} f_{R_1}(R_1, t) = 0, \frac{\partial}{\partial x} f_{R_2}(R_1, t) > 0$.

From Lemma \ref{neck point comparison}, the neck points of the initial conditions have the following comparison: $u_{R_1}(0,0) \leq u_{R_2}(0,0)$. Now we have two cases: either this inequality is a strict inequality, or $u_{R_1}(0,0) = u_{R_2}(0,0)$.

\smallskip

\emph{Case} $1$. If $u_{R_1}(0,0) < u_{R_2}(0,0)$, then at $t = 0$, there is no intersection point between $\Gamma_{R_1,0}$ and $\Gamma_{R_2,0}$. By the Sturmian Theorem, either $\Gamma_{R_1,t}$ and $\Gamma_{R_2,t}$ has no intersection point, or there is a new intersection point that can only appear on the boundary.

If $f_{R_1}(R_1,t) > f_{R_2}(R_1,t)$ for all time $t \geq 0$, then there will be no new intersection point appearing on the boundary. Therefore for any $t \geq 0$,  $\Gamma_{R_1,t}$ and $\Gamma'_{R_2,t}$ do not intersect, which implies that $\Gamma_{R_1,t}$ is on top of $\Gamma'_{R_2,t}$, and $u_{R_1}(0,t) < u_{R_2}(0,t)$.

If $f_{R_1}(R_1,t) \leq f_{R_2}(R_1,t)$ at some time $t$, let $t_1$ be the first time where this inequality holds. Since $\frac{\partial}{\partial x} f_{R_1}(R_1, t) < \frac{\partial}{\partial x} f_{R_2}(R_1, t)$, thus by the maximum principle, $f_{R_1}(R_1,t) < f_{R_2}(R_1,t)$ for all time $t > t_1$. Hence there will be at most one extra intersection point.

For time $t \leq  t_1$, we must have $f_{R_1}(R_1,t) \geq  f_{R_2}(R_1,t)$, and argue similarly as above we get $u_{R_1}(0,t) < u_{R_2}(0,t)$.

For time $t > t_1$, we have $f_{R_1}(R_1,t) < f_{R_2}(R_1,t)$, and $\Gamma_{R_1,t}$ and $\Gamma'_{R_2,t}$ intersect at most at one point. If there is exactly one intersection point, then $u_{R_1}(0,t) < u_{R_2}(0,t)$. If there is no intersection point, $\Gamma'_{R_2,t}$ is on top of $\Gamma_{R_1,t}$, and by a similar argument as in Lemma \ref{neck point comparison}, we get a contradiction.

In summary, $u_{R_1}(0,t) \leq  u_{R_2}(0,t)$ for all $t\geq 0$.

\smallskip

\emph{Case} $2$. If $u_{R_1}(0,0) = u_{R_2}(0,0) = a$. For any small time $\e > 0$, since the height is strictly decreasing, $f_{R_1}(R_1, \e) < f_{R_2}(R_1, 0)$.

If $u_{R_1}(0, \e) \geq u_{R_2}(0,0)$, then $\Gamma'_{R_2,0}$ is on top of $\Gamma_{R_1,t}$. By a similar argument as in Lemma \ref{neck point comparison}, we get a contradiction. Hence $u_{R_1}(0, \e) < u_{R_2}(0,0)$, which implies that $\Gamma_{R_1, \e}$ and $\Gamma'_{R_2, 0}$ have exactly one intersection point.

Since $f_{R_1}(R_1, \e) < f_{R_2}(R_1, 0)$ and $\frac{\partial}{\partial x} f_{R_1}(R_1, t+ \e) = 0 < \frac{\partial}{\partial x} f_{R_2}(R_1, t)$, by Sturmian Theorem, $\Gamma_{R_1, t + \e}$ and $\Gamma'_{R_2, t}$ have at most one intersection point. By maximum principle, $f_{R_1}(R_1, t + \e) < f_{R_2}(R_1, t)$, therefore if there is exactly one intersection point, we get $u_{R_1}(0, t+ \e) < u_{R_2}(0,t)$. If there is no intersection point, $\Gamma'_{R_2, t}$ is on top of $\Gamma_{R_1, t + \e}$. Again, by a similar argument as in Lemma \ref{neck point comparison}, we get a contradiction.

Therefore, $u_{R_1}(0, t+ \e) < u_{R_2}(0,t)$ is true for all $\e > 0$, by taking the limit as $\e \to 0$, we claim that $u_{R_1}(0, t) \leq u_{R_2}(0,t)$.

\end{proof}

From Remark \ref{neck point upper bound}, we know that $u_R(0, t) \leq \alpha$ for all $R > 2\alpha$. Thus by Proposition \ref{prop：neck comparison for different R}, we know that $\eta(t) = \lim\limits_{R \to \infty} u_R(0,t)$ exists.

Let $\tau > 2\alpha +2$ be a large constant, then $u_{\tau}(0,t) \leq \eta(t) \leq \alpha$ for all $t \geq 0$. Given any $T > 0$, let $\mu(T) = \min_{t \in [0,T]} u_{\tau} (0,t) > 0$. Then for any $R > \tau$, $t \in [0,T]$, we have $\mu(T) \leq u_R(0,t) \leq \alpha$.

\begin{prop} \label{neck uniform gradient estimate}

Given $R > \tau$, $T > 0$. For any $y \in [0, \frac{\mu(T)}{2}]$, $t \in [0,T]$, we have
\begin{equation} \label{vertical uniform gradient bound}
\frac{\pr}{\pr y} u_R(y, t) \leq \frac{2\alpha e^{\frac{2y}{\mu(T)}}}{\mu(T)} \leq \frac{2 \alpha e}{\mu(T)}.
\end{equation}
\end{prop}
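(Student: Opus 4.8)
The plan is to adapt the Sturmian barrier argument from the proof of Lemma~\ref{vertical graph gradient upper bound}, replacing the $R$-dependent barrier by the $R$-independent family of catenoids of fixed neck radius $c:=\mu(T)/2$. Write $\hat G_t$ for the doubled profile curve of $\Gamma_{R,t}$ --- the graph of $f_R(\cdot,t)$ together with its reflection across the $x$-axis --- so that near the neck $\hat G_t$ is a smooth curve through $(u_R(0,t),0)$ with a vertical tangent there, and its upper sheet is the graph $x=u_R(y,t)$. For $\xi\in\R$ let $\overline{\mathcal C}_\xi=\{x=c\cosh((y-\xi)/c)\}$ be the static catenoid of neck radius $c$ centered at height $\xi$; its upper sheet is the graph $x=w_\xi(y):=c\cosh((y-\xi)/c)$, $y\ge\xi$, and
\[
w_\xi'(y)=\sinh\!\big((y-\xi)/c\big)=\sqrt{(w_\xi(y)/c)^2-1}\le w_\xi(y)/c.
\]
The reason for the choice $c=\mu(T)/2$ is that $\mu(T)\le u_R(0,t)\le\alpha$ for all $t\in[0,T]$ (as recorded just before the statement), so that the neck $x=c$ of every barrier catenoid stays strictly to the left of the neck of $\hat G_t$, exactly as the barrier in Lemma~\ref{vertical graph gradient upper bound} stays below the neck of the flow; this is what makes the intersection-counting contradiction available.

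I would prove \eqref{vertical uniform gradient bound} together with an auxiliary location bound by a continuity (bootstrap) argument. Let $y_\ast\in[0,\mu(T)/2]$ be the largest value such that $u_R(y,t)\le \alpha e^{2y/\mu(T)}$ for all $y\in[0,y_\ast]$ and $t\in[0,T]$; this set of values is nonempty ($0$ belongs to it since $u_R(0,t)\le\alpha$) and closed. For $(y_0,t_0)$ with $y_0\in(0,y_\ast]$, $t_0\in(0,T]$, put $x_0:=u_R(y_0,t_0)$ and choose $\xi$ so that $\overline{\mathcal C}_\xi$ runs through $P:=(x_0,y_0)$ along its upper sheet, i.e.\ $\xi=y_0-\operatorname{arccosh}(x_0/c)\cdot c$. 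One then argues, exactly as in Lemma~\ref{vertical graph gradient upper bound}, that $\partial_y u_R(y_0,t_0)\le w_\xi'(y_0)$: if not, then $\hat G_{t_0}$ passes from the left of $\overline{\mathcal C}_\xi$ to its right at $P$, and following $\hat G_{t_0}$ downward from $P$ it must cross $\overline{\mathcal C}_\xi$ again before returning to $\{x=R\}$ at the bottom --- because the catenoid attains its minimal coordinate $x=c<u_R(0,t_0)$ near the axis whereas $\hat G_{t_0}$ never has $x<u_R(0,t_0)$ --- which contradicts the Sturmian bound on the number of intersection points of $\hat G_t$ with $\overline{\mathcal C}_\xi$. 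Since on $[0,y_\ast]$ we have $u_R\le\alpha e^{2y/\mu(T)}\le\alpha e$, the Sturmian comparison is being applied with a curve that stays in a fixed compact subset of $C_R$, and the number of intersection points created at $\{x=R\}$ is controlled using the monotonicity of $f_R(R,\cdot)$ from Lemma~\ref{height decreasing}. Hence $\partial_y u_R(y_0,t_0)\le w_\xi'(y_0)=\sqrt{(x_0/c)^2-1}\le x_0/c$ for all $y_0\le y_\ast$, $t_0\le T$.

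Feeding $\partial_y u_R(y,t)\le u_R(y,t)/c$ into Gr\"onwall's inequality, starting from $u_R(0,t)\le\alpha$, yields $u_R(y,t)\le\alpha e^{y/c}=\alpha e^{2y/\mu(T)}$ on $[0,y_\ast]$; by continuity this persists slightly past $y_\ast$, so $y_\ast<\mu(T)/2$ is impossible and $y_\ast=\mu(T)/2$. Both bounds therefore hold on all of $[0,\mu(T)/2]\times[0,T]$, and combining them gives, for $y\in[0,\mu(T)/2]$, $t\in[0,T]$,
\[
\frac{\partial}{\partial y}u_R(y,t)\le\frac{u_R(y,t)}{c}\le\frac{\alpha e^{2y/\mu(T)}}{\mu(T)/2}=\frac{2\alpha e^{2y/\mu(T)}}{\mu(T)}\le\frac{2\alpha e}{\mu(T)},
\]
which is \eqref{vertical uniform gradient bound}; the boundary case $y=0$ also follows directly from $\partial_y u_R(0,t)=0$.

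The main difficulty --- as in the proofs of Lemma~\ref{vertical graph gradient upper bound}, Lemma~\ref{away from boundary} and Proposition~\ref{large x uniform gradient estiamte} --- is the bookkeeping of intersection numbers needed to turn "crosses $\overline{\mathcal C}_\xi$ again" into an actual contradiction: one must bound the number of intersections of the curved catenoid $\overline{\mathcal C}_\xi$ with the piecewise-linear initial curve $\rho_{n_R,R}$ (and, more generally, at a well-chosen later time), and account for the (boundedly many, by monotonicity of $f_R(R,\cdot)$) new intersections created at the moving boundary $\{x=R\}$. The remaining ingredients are elementary: the quantitative form of the estimate is nothing more than the hyperbolic inequality $w_\xi'=\sqrt{(w_\xi/c)^2-1}\le w_\xi/c$ with $c=\mu(T)/2$.
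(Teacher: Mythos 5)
Your choice of barrier --- the static catenoid of neck radius $c=\mu(T)/2$ --- and the overall Sturmian strategy match the paper's proof, as does the final hyperbolic-inequality-plus-Gr\"onwall step. But the part you flag as ``the main difficulty'' is precisely where the argument is incomplete, and as sketched it would not close.

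Your sketch exhibits only two intersections between the flow curve and the catenoid: the touching point $P$, and one crossing below it (obtained from the catenoid's neck $x=c$ lying strictly to the left of the flow's neck $x=u_R(0,t_0)$). But the Sturmian bound here is not $1$ as in Lemma~\ref{vertical graph gradient upper bound} (where the piecewise-linear initial curve meets the monotone vertical-graph barrier exactly once). For the single-sheet curves $\Gamma_{R,t}$ and $\mathcal C_\xi$, the paper's Claim~3 gives a bound of $2$, since the initial L-shaped profile $\rho_{n_R,R}$ can already meet the catenoid twice and new intersections may be created at $\{x=R\}$. So two intersections do not contradict anything --- one needs three. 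If one instead works with the doubled curves $\hat G_t$ and $\overline{\mathcal C}_\xi$ as you propose, the initial intersection count (and hence the Sturmian bound) is larger still, which makes the target number of crossings higher, not lower. The paper produces the third crossing by a mechanism your sketch does not mention: the comparison principle forces the existence of a point $x_1$ where $f_R(x_1,t_1) > h_0^{-1}(x_1)$, since otherwise the static catenoid $\mathcal C_0$ would lie above $\Gamma_{R,t_1}$ for all later times, keeping $u_R(0,t)$ bounded below and contradicting Lemma~\ref{neck point limit}. Because $\xi<0$ gives $h_0^{-1}>h_\xi^{-1}$, this yields a point where the flow lies strictly above $\mathcal C_\xi$, and the intermediate value theorem then produces the third intersection that contradicts Claim~3.

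The bootstrap on $y_*$ is also unnecessary and does not supply the missing piece. The paper establishes the pointwise derivative bound $\partial_y u_R(y,t)\le 2u_R(y,t)/\mu(T)$ directly for all $y\in[0,\mu(T)/2]$ via the Sturmian argument, with no a priori bound on $u_R(y,t)$, and only afterward integrates (Gr\"onwall with the anchor $u_R(0,t)\le\alpha$) to get $u_R(y,t)\le\alpha e^{2y/\mu(T)}$. The quantity you bootstrap plays no role in the intersection-counting step, which is the only nontrivial ingredient.
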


\begin{proof}

When $t = 0$, $\frac{\pr}{\pr y} u_R(y, t) = 0 \leq \frac{2\alpha e^{\frac{2y}{\mu(T)}}}{\mu(T)}$. Now we focus on $t>0$.

Let $h_\xi(y) = \frac{\mu(T)}{2} \cosh(\frac{2(y - \xi)}{\mu(T)})$, defined on $[\xi, \infty)$. Its inverse function is given by $h_\xi^{-1}(x) = \xi + \frac{\mu(T)}{2} \ln (\frac{2x}{\mu(T)} + \sqrt{\frac{4x^2 - \mu(T)^2}{\mu(T)^2}})$.

Since $u_R(0,t) \geq \mu(T)$ for all $t \in [0,T]$, by Corollary \ref{lower bound on height}, $u_R(y,t)$ is well-defined for $y \in [0,\frac{\mu(T)}{2}]$, $t \in (0, T]$.

Consider the catenoid with neck point $(\frac{\mu(T)}{2}, \xi)$. It is static under the mean curvature flow, and its profile curve $\overline{\mathcal{C}}_{\xi}$ is given by $x = h_\xi(y)$. Let $\mathcal{C}_{\xi}$ denote the graph of $y = h_\xi^{-1}(x)$, and $\overline{\mathcal{C}}_{\xi}$ is the union of $\mathcal{C}_{\xi}$ and its reflection with respect to the line $y = \xi$.

By the comparison principle Proposition \ref{comparison}, for any $t \geq 0$, $\mathcal{C}_{\xi}$ can not be on top of $\Gamma_{R,t}$. Therefore there exists $x(t) \in [u_R(0,t) ,R ]$ such that $h_\xi^{-1}(x(t)) < f_R(x(t),t)$.

Let $c = h_0^{-1}(R)$. The $y-$coordinate of the intersection point between $\mathcal{C}_{\xi}$ and the boundary of the cylinder $\{x = R\}$ is $(R,\xi + c)$. 

$\Gamma_{R,0} = \rho_{n_R,R}$ with $n_R \geq \mu(T)$. From the construction of $\rho_{\delta, R}$ in \eqref{initial curve definition}, we observe that for $\xi \geq f_R(R,0) - c$, there are at most two intersection points between $\Gamma_{R,0}$ and $\mathcal{C}_{\xi}$; for $\xi <  f_R(R,0) - c$, there is at most one intersection point between $\Gamma_{R,0}$ and $\mathcal{C}_{\xi}$. 

We have the following claims:

\smallskip

\noindent\textbf{Claim 3.} For any $t \in [0,T]$, there are at most two intersection points between $\Gamma_{R,t}$ and $\mathcal{C}_{\xi}$.

\noindent \emph{Proof.} By the Sturmian Theorem, the new intersection point between $\Gamma_{R,t}$ and $\mathcal{C}_{\xi}$ can only appear on the boundary $x = R$. As mentioned above, the boundary point of $\mathcal{C}_{\xi}$ is $(R, \xi + c)$.

If $\xi \geq f_R(R,0) - c$, then for any time $t > 0$, $f_R(R,t) < f_R(R,0) \leq \xi + c$, there will be no extra intersection point appearing on the boundary.

If $\xi <  f_R(R,0) - c$, by Lemma \ref{height decreasing}, the boundary point of $\Gamma_{R,t}$ moves toward $(R,0)$ monotonically under the flow, thus there will be at most one extra intersection point.

This proves the claim. \hfill\qed

\smallskip

Given time $t_1 \in (0,T]$, let $D(y) = h_0^{-1}(u_R(y,t_1)) - y$, which is well defined on $[0, \frac{\mu(T)}{2}]$. Due to the uniqueness and smoothness of the solution $f_R(x,t)$, along with the reflexive symmetry of the profile curve, we have $\frac{\partial}{\partial y} u_R(0,t_1) = 0$. Then 
\begin{align*}
D'(0) = (h_0^{-1})'(u_R(0,t_1)) \frac{\pr}{\pr y} u_R(0,t_1) - 1 = -1 < 0.
\end{align*}
Since $D'(x)$ is a smooth function, then there exists a maximal interval $[0, q]$ such that $D'(y)$ is nonpositive on this interval. Then $D(0) - D(q) = - \int_0^q D'(y) dy > 0$.

\noindent\textbf{Claim 4.} The upper bound of the maximal interval $q \geq \frac{\mu(T)}{2}$.

\noindent \emph{Proof.} We argue by contradiction. If $q < \frac{\mu(T)}{2}$, by the choice of $q$, there exists a small $\e_1 > 0$ with $q + \e_1 < \frac{\mu(T)}{2}$ such that $D'(q + \e_1) > 0$, and $|D(q) - D(q + \e_1)| < \frac{d}{2}$.

Thus $D(0) > D(q + \e_1)$. By the inverse function theorem, 
\begin{align*}
(h_0^{-1})'(u_R(q +\e_1,t_1)) > \left(\frac{\pr}{\pr y} u_R(q+\e_1, t_1)\right)^{-1} > \frac{\pr}{\pr x} f_R(u_R(q+\e_1, t_1), t_1).
\end{align*}

let $\xi = - D(q + \e_1)$, then $\mathcal{\mathcal{C}}_{\xi}$ and $\Gamma_{R,t_1}$ intersect at $(u_R(q + \e_1, t_1), q + \e_1)$.

We know $\xi = q+ \e_1 - h_0^{-1}(u_R(q + \e_1), t_1)) \leq q + \e_1 - h_0^{-1}(\mu(T)) < \frac{\mu(T)}{2} - \frac{3}{5} \mu(T) < 0$.

By the comparison principle Proposition \ref{comparison}, $\mathcal{C}_0$ can not be on top of $\Gamma_{R,t_1}$. Thus there exists $x_1 \in [u_R(0,t) ,R ]$ such that $h_0^{-1}(x_1) < f_R(x_1,t_1)$. Then
\begin{equation*}
\begin{aligned}
x_1 &= u_R(f_R(x_1, t_1), t_1) > u_R(h_0^{-1}(x_1),t_1) \geq u_R(h_0^{-1} (u_R(0,t)), t_1) \\
& \geq  u_R(h_0^{-1}(\mu(T)), t_1) > u_R(\frac{3}{5}\mu(T), t_1).
\end{aligned}
\end{equation*}

Since $\frac{\partial}{\partial x} f_R(u_R(q+\e_1, t_1),t_1) < (h_0^{-1})'(u_R(q +\e_1,t_1))$, then there exists a small $\e_2 > 0$ with $\e_2 < \min \{ u_R(q+\e_1, t_1) - u_R(0, t_1) , u_R(\frac{3}{5}\mu(T), t_1) - u_R(q + \e_1, t_1) \}$ such that
\begin{equation*}
\begin{aligned}
& f_R(u_R(q+\e_1, t_1) - \e_2,t_1) > h_i^{-1}(u_R(q +\e_1,t_1) - \e_2), \\
& f_R(u_R(q+\e_1, t_1) + \e_2,t_1) < h_i^{-1}(u_R(q +\e_1,t_1) + \e_2).
\end{aligned}
\end{equation*}

Now for $x = u_R(0, t_1)$ and $x = x_1$, we have
\begin{equation*}
\begin{aligned}
&f_R(u_R(0,t_1),t_1) = - D(0) + h_0^{-1}(u_R(0,t_1)) < -D(q+\e_1) + h_0^{-1}((u_R(0,t_1))) = h_i^{-1} (u_R(0,t_1)), \\
&f_R(x_1, t_1) > h_0^{-1} (x_1) > h_i^{-1}(x_1).
\end{aligned}
\end{equation*}

By the intermediate value theorem, there exists $x_2 \in (u_R(0,t_1), u_R(q+\e_1, t_1) - \e_2), x_3 \in (u_R(q+\e_1, t_1) + \e_2, x_1)$ such that $f_R(x_2, t_1) = h_0^{-1}(x_2), f_R(x_3, t_1) = h_0^{-1}(x_3)$. Therefore there are three intersection points $(u_R(q+\e_1, t_1), q + \e_1), (x_2, f_R(x_2, t_1)), (x_3, f_R(x_3,t_1)$ between $\Gamma_{R,t_1}$ and $\mathcal{C}_{\xi}$, which contradicts to Claim $3$. \hfill\qed

\smallskip

Now for any $y \in [0, \frac{\mu(T)}{2}]$ and $t \in (0,T]$, by Claim $4$, $D'(y) \leq 0$. Thus
\begin{equation*}
\frac{\pr}{\pr y} u_R(y, t) \leq \frac{1}{(h_0^{-1})'(u_R(y,t))} = \frac{\sqrt{4 (u_R(y,t))^2 - \mu(T)^2}}{\mu(T)} \leq \frac{2 u_R(y,t)}{\mu(T)}.
\end{equation*}

Since $u_R(0,t) \leq \alpha$, thus $u_R(y,t) \leq \alpha e^{\frac{2y}{\mu(T)}}$, and we prove \eqref{vertical uniform gradient bound}.

\end{proof}

Now we are ready to take the limit of $f_R(x,t)$ as $R \to \infty$.

\begin{prop}
There exists a rotationally symmetric mean curvature flow that is the $C_{\text{loc}}^\infty$ subsequential limit of $S(G_{f_R(\cdot,t)})$ as $R\to\infty$. This limit flow exists for all future time.
\end{prop}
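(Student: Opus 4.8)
The plan is to establish uniform local geometric bounds for the flows $S(G_{f_R(\cdot,t)})$ as $R\to\infty$ and then extract a smooth subsequential limit by a standard diagonal argument. The first step is to check that, for each fixed $T>0$, the curves $\Gamma_{R,t}$ stay in a fixed region with no degeneration on $[0,T]$. Indeed, by Remark~\ref{neck point upper bound} together with the monotonicity of the neck point in $R$ proved just above, we have $\mu(T)\le u_R(0,t)\le\alpha$ for all $R>\tau$ and $t\in[0,T]$, so each surface has a neck whose radius is trapped between the two fixed positive constants $\mu(T)$ and $\alpha$: it never pinches, and since $f_R(\cdot,t)$ and $u_R(\cdot,t)$ are increasing, the whole profile curve has $x$-coordinate $\ge\mu(T)$, so the surface stays outside a fixed tube around the rotation axis. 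Moreover, Lemma~\ref{height decreasing} gives $f_R(R,t)\le f_R(R,0)<\alpha$, so $\Gamma_{R,t}$ lies in the strip $0\le y<\alpha$. Consequently, for any compact $K\subset\R^3$, only a bounded piece of $\Gamma_{R,t}$ meets $K$, and it sits in a fixed compact subset of $\{x>0\}$.

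The second step upgrades this to uniform $C^1$ and then $C^\infty_{\mathrm{loc}}$ bounds. On $\{x\ge\alpha+2\}$ the horizontal graph function of $f_R$ obeys the $R$-independent gradient bound of Proposition~\ref{large x uniform gradient estiamte}, and on $\{0\le y\le\mu(T)/2\}$ the vertical graph function $u_R$ obeys the $R$-independent bound of Proposition~\ref{neck uniform gradient estimate}; together with the first step, these express a neighbourhood of every point of $\Gamma_{R,t}$ as a graph (horizontal or vertical) with $C^1$ norm bounded uniformly in $R$ over $[0,T]$. Each such graph solves one of the quasilinear equations in~\eqref{mcf equation}, which on the relevant range is uniformly parabolic with smooth coefficients (for the horizontal equation because $x$ is bounded away from $0$, for the vertical equation because $u_R\ge\mu(T)>0$ renders the $-1/u$ term harmless), and at $y=0$ the reflection symmetry supplies the Neumann condition $(u_R)_y(0,t)=0$, across which one can reflect. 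Interior parabolic Schauder estimates (see \cite{ladyzhenskaia1968linear}, or the interior estimates of Ecker--Huisken \cite{EckerHuisken91_interior}) then yield, for every compact subset of $\{x>0\}\times(0,T]$ and every $k$, a bound on the $C^k$ norm of the graph functions of $f_R$ that is independent of $R>\tau$.

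In the third step I would exhaust $\{x>0\}\times(0,\infty)$ by nested compact sets, apply Arzel\`a--Ascoli and diagonalize to obtain a subsequence $R_j\to\infty$ with $f_{R_j}\to f_\infty$ in $C^\infty_{\mathrm{loc}}$ on $\{x>0\}\times(0,\infty)$. Since the equation~\eqref{mcf equation} passes to smooth limits, $f_\infty$ solves it, and because the surfaces stay a uniform distance $\mu(T)$ from the axis, $C^\infty_{\mathrm{loc}}$ convergence of the profile curves on $\{x>0\}$ gives $C^\infty_{\mathrm{loc}}$ convergence of the surfaces $S(G_{f_{R_j}(\cdot,t)})$ to $S(G_{f_\infty(\cdot,t)})$ on all of $\R^3$ (for $t>0$); its initial curve is the limit $\rho_\eta$ from Corollary~\ref{initial neck point limit}. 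As the constants in the previous steps were obtained for every finite $T$, the limit flow is smooth for all $t\in(0,\infty)$, hence exists for all future time.

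The main obstacle is the first step: it is precisely the upper neck bound $u_R(0,t)\le\alpha$ (which prevents the neck from opening up to infinity), together with the lower bound $u_R(0,t)\ge\mu(T)$ and the catenoid gradient barriers of Propositions~\ref{large x uniform gradient estiamte} and~\ref{neck uniform gradient estimate}, that keeps the family $\{\Gamma_{R,t}\}$ from either pinching or escaping to infinity as $R\to\infty$; once these are in hand the compactness is routine. A minor subtlety worth flagging is that the limiting initial curve $\rho_\eta$ is only Lipschitz, so smoothness of the limit flow can be asserted only for $t>0$, which is exactly the regularity claimed.
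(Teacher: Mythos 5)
Your overall strategy is the same as the paper's: trap the neck in $[\mu(T),\alpha]$, get uniform local gradient bounds, apply Arzel\`a--Ascoli plus a diagonal argument, and then upgrade to $C^\infty_{\mathrm{loc}}$ via interior estimates. However, there is an imprecision in your second step that amounts to a gap as written. You claim that Proposition~\ref{large x uniform gradient estiamte} (valid only on $x\geq\alpha+2$) and Proposition~\ref{neck uniform gradient estimate} (valid only on $y\leq\mu(T)/2$) ``express a neighbourhood of every point of $\Gamma_{R,t}$ as a graph (horizontal or vertical) with $C^1$ norm bounded uniformly in $R$.'' But there is an intermediate piece of the curve with $y>\mu(T)/2$ and $x<\alpha+2$: at $y=\mu(T)/2$ the $x$-coordinate $u_R(\mu(T)/2,t)$ is only known to lie in $[\mu(T),\alpha e]$, and since $\mu(T)\leq\alpha<\alpha+2$, the two cited propositions leave a region where neither controls the relevant graph gradient. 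In that region $f_{R,x}$ could be arbitrarily large (the curve near-vertical), where Proposition~\ref{large x uniform gradient estiamte} says nothing, or $u_{R,y}$ could be arbitrarily large (the curve near-horizontal), where Proposition~\ref{neck uniform gradient estimate} says nothing.

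The paper closes this precisely with the observation you already implicitly have in step one but do not bring to bear: both $u_R$ and $f_R$ are strictly increasing, so on $[u_R(0,t),\alpha+4]$ the tangent direction of the profile curve lies within $\pi/4$ of the diagonal line $\{y=x\}$; viewed as a graph over that diagonal the curve has slope at most $1$, hence a uniform (universal, in fact) $C^{0,1}$ bound there, quite independently of the two catenoid-barrier propositions. That diagonal-graph bound, together with Proposition~\ref{large x uniform gradient estiamte} for $x\geq\alpha+4$, is what feeds into Arzel\`a--Ascoli and into the Ecker--Huisken interior estimates. Your proof would be complete if you replaced the sentence attributing the intermediate-region $C^1$ control to the two propositions by this monotonicity/diagonal-graph observation; as it stands, the credit is misassigned and the coverage claim is not justified by the statements you cite.
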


\begin{proof}

Given $T > 0$, for any $R > \tau$, $t_1 \in (0,T]$, the neck point of $f_R(x,t_1)$ is $(u_R(0,t_1), 0)$. Because $f_R(x,t_1)$ is a strictly increasing function, for any $x_1 \geq u_{R}(0,t_1)$, the restricted curve $\{(x, f_R(x,t_1))| x \geq x_1 \}$ lies within the region $\{x \geq x_1, y \geq f_R(x_1,t_1)\}$. Hence the intersection angle between this restricted curve and the line $\{y = x + f_R(x_1, t_1) - x_1\}$ is not larger than $\frac{\pi}{4}$. 

Consider the graph of function $f_R(x,t_1)$ on the interval $[u_{R}(0,t_1), \alpha  + 4]$. We can view it as the graph of a function $g(x)$ over the line $\{y = x\}$. The above argument tells us that $|g'(x)| \leq 1$. The two endpoints of this restricted graph are $(u_R(0,t_1), 0)$ and $( \alpha + 4, f_R(\alpha + 4, t_1))$. The $x-$coordinates of the projection of these two points onto $\{y = x\}$ have the following bounds:
\begin{equation*}
\begin{aligned}
0 < &\frac{u_R(0,t_1)}{2} \leq \frac{\alpha}{2}, \qquad
\frac{\alpha + 4}{2} < &\frac{\alpha + 4 + f_R(\alpha + 4, t_1)}{2} < \frac{\alpha + 4 + \alpha}{2}.
\end{aligned}
\end{equation*}

Thus this restricted graph can be viewed as a normal graph of a $C^1$ function with a uniform derivative bound (not dependent on $R$ and $t$). Together with the uniform gradient estimate given by Proposition \ref{large x uniform gradient estiamte}, for any compact region $Q: \{(x,t) : 0 \leq x \leq a, 0 \leq t \leq b\}$, we can take a subsequential limit of the restricted function $f_R(x,t)|_{Q}$ as $R \to \infty$ by Arzela-Ascoli theorem. By a diagonal sequence argument, we know $\{f_R(x,t), t \in [0,T
]\}$ subsequentially converges in $R$ to a function $\tilde{f}(x,t)$. By the uniform gradient estimates in Proposition \ref{large x uniform gradient estiamte} and Proposition \ref{neck uniform gradient estimate}, the interior gradient estimate in \cite{EckerHuisken91_interior}, this convergence is $C_\text{loc}^\infty$ in the spacetime.

From the construction, we can see that $\tilde{f}$ is defined for all $t \in [0,T]$, $x \in [\eta(t), \infty)$. The fact that $\eta(t) \geq u_{\tau}(0,t) > 0$ with a further diagonal argument guarantees a limit mean curvature flow that exists for all future time.

\end{proof}

\begin{rmk}
Since $\tf$ is a subsequential limit of $f_R$, thus all the uniform properties of $f_R$ also hold for $\tf$. Hence $\tf$ is a strictly increasing function in $x$ for all $t > 0$, its function value is bounded by $\alpha$, and its neck point tends to the origin monotonically. The gradient estimate in Proposition \ref{large x uniform gradient estiamte} also holds for $\tf$.
\end{rmk}

So far, we have obtained an eternal flow $M(t):=S(G_{\tf(\cdot,t)})$ defined in the whole $\R^3$. It remains to show that the long time limit of this flow is the plane with multiplicity $2$.

We first show that for a sequence of $t_i\to\infty$, $M(t_i)$ converges to the plane with multiplicity $2$. As a consequence, this allows us to show that the neck point of $\tf(\cdot,t)$ tends to $0$ as $t\to\infty$. 

The proof uses an argument by Ilmanen \cite{Ilmanen95_Sing2D}, saying that if the spacetime $L^2$-norm of the mean curvature of a sequence of embedded mean curvature flows in $\R^3$ tends to $0$, then this sequence of mean curvature flows subsequentially converge to a smooth embedded minimal surface with multiplicity. The following statement can be found in \cite[Lemma 2.13]{BamlerKleiner23_multiplicity}, with slight modifications in notations.

\begin{lem}\label{lem:Ilmanen argument}
    Suppose $\delta>0$, $(M^i(t))_{t\in[t_1-\delta,t_2+\delta]}$ is a sequence of smooth mean curvature flows in $\R^3$, with the genus of $M^i(t)$ and the Gaussian density of $M^i(t)$ uniformly bounded for all $i\in\Z_+$ and $t\in[t_1-\delta,t_2+\delta]$. Suppose for every open subset $U$ of $\R^3$, we have
    \begin{equation}\label{eq:H L^2 bound}
        \int_{t_1}^{t_2}\int_{M^i(t)\cap U}|\vec H|^2d\mathcal H^2dt\to 0,\ \text{as $i\to\infty$.}
    \end{equation}
    Then there exists a subsequence of $(M^i(t))_{t\in[t_1,t_2]}$ that converges (in the sense of varifold convergence) to a static mean curvature flow, which is a smooth embedded complete minimal surface with multiplicity $k\in\Z_+$.
\end{lem}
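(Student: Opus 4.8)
The plan is to follow Ilmanen's compactness argument in three stages: extract a weak subsequential limit as Brakke flows, use the $L^2$ hypothesis to see that this limit is static and stationary, and finally upgrade the weak limit to a smooth embedded minimal surface with a well-defined integer multiplicity using the genus and density bounds. First I would produce the limit: since the Gaussian density of $M^i(t)$ is uniformly bounded, Huisken's monotonicity formula gives uniform local area bounds for the flows on $[t_1-\delta,t_2+\delta]$, so by Ilmanen's compactness theorem for integral Brakke flows (see \cite{Brakke78, Ilmanen95_Sing2D}) a subsequence converges as Brakke flows to a limit $(\mu_t)_{t\in[t_1,t_2]}$ in $\R^3$, with the associated integral varifolds of $M^i(t)$ converging to that of $\mu_t$ for a.e.\ $t$.

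Second I would show the limit does not move. Applying hypothesis \eqref{eq:H L^2 bound} with $U$ exhausting $\R^3$, together with the lower semicontinuity of the $L^2$-norm of the generalized mean curvature under varifold convergence (Allard's first-variation theory, see \cite{Simon83_GMT}) and Fatou's lemma in time, one obtains $\int_{t_1}^{t_2}\!\int |\vec H_{\mu_t}|^2\, d\mu_t\, dt = 0$. Hence $\mu_t$ is a stationary integral $2$-varifold for a.e.\ $t$, and a Brakke flow with vanishing mean curvature is static, so $\mu_t \equiv V$ for a fixed stationary integral $2$-varifold $V$ in $\R^3$ with locally bounded density ratio.

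Third — and this is the main obstacle — I would promote $V$ to $k\,[\Sigma]$ with $\Sigma$ a smooth embedded complete minimal surface and $k\in\Z_+$. By Allard's regularity theorem, $V$ is a smooth minimal surface with locally constant integer multiplicity away from a closed set $\Sigma_{\mathrm{sing}}$ with $\mathcal{H}^2(\Sigma_{\mathrm{sing}})=0$; the genus bound is exactly what is needed to rule out this singular set and to control the multiplicity. Via Gauss–Bonnet, together with the uniform area bound and the smallness of $\int|\vec H|^2$, the second fundamental forms of $M^i(t)$ have uniformly locally bounded $L^2$-norm, so curvature can concentrate at only finitely many points; at each such point a rescaling produces a nonflat complete minimal surface in $\R^3$ of finite total curvature, and the genus together with the density bound permit only finitely many of these, after which an $\varepsilon$-regularity and removable-singularity argument shows $V$ is smooth across $\Sigma_{\mathrm{sing}}$ as well. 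Finally, writing $V$ locally as a finite ordered sum of graphs over the limiting minimal surface $\Sigma$, the strong maximum principle for the minimal surface equation forces consecutive graphs either to coincide identically or to stay disjoint; since they all arise from embedded surfaces collapsing onto $\Sigma$, they must all coincide, giving $V=k\,[\Sigma]$ with $\Sigma$ smooth, embedded and complete. Brakke's regularity theorem (\cite{Brakke78, White05_local}) then upgrades the convergence $M^i(t)\to\Sigma$ to the smooth, multiplicity-$k$ convergence asserted. The delicate point throughout is precisely this passage from a varifold limit to a genuinely smooth embedded surface with well-defined multiplicity, where the genus hypothesis is essential; for the precise formulation I would follow Ilmanen \cite{Ilmanen95_Sing2D} and \cite{BamlerKleiner23_multiplicity}.
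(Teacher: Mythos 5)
The paper does not prove this lemma at all: it is stated as a direct citation of \cite[Lemma 2.13]{BamlerKleiner23_multiplicity} (which in turn follows Ilmanen \cite{Ilmanen95_Sing2D}), and no argument is given in the text. So your proposal cannot be measured against a proof in the paper, only against the external references. That said, your three-stage outline is the same strategy as Ilmanen's, and the architecture (Brakke compactness from Gaussian density bounds, stationarity from lower semicontinuity of the $L^2$ mean curvature, and regularity plus integer multiplicity from genus and density bounds) is the right one.

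Two points in your sketch would not survive being written out and are worth flagging. First, ``a Brakke flow with vanishing mean curvature is static, so $\mu_t\equiv V$'' is false as stated: Brakke flows are only required to satisfy an inequality, so a stationary plane that suddenly disappears is a legitimate Brakke flow with $\vec H=0$ a.e.\ in time. Ruling out mass drop in the limit is genuinely nontrivial and in Ilmanen's argument is an \emph{output} of the regularity analysis (a clearing-out/unit-regularity argument after the graph decomposition), not a formal consequence of $\int|\vec H|^2\to 0$. Second, the Gauss--Bonnet step that converts the genus bound and $\int|\vec H|^2\to 0$ into a local $L^2$ bound on $|A|^2$ is applied on non-closed pieces $M^i(t)\cap B_R$, so boundary geodesic-curvature terms appear; controlling these, and showing the residual curvature concentration takes the form of finitely many vanishing ``pimples,'' is precisely the content of Simon's graph decomposition lemma \cite[Lemma 2.1]{Simon93_willmore}, which the paper itself points to in Remark \ref{limit neck point limit}'s follow-up remark. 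Your bubbling/rescaling heuristic gestures at this but would need to be replaced by (or reduced to) Simon's lemma to close the argument. Finally, the last step asserting that the ordered graphs ``must all coincide'' by the strong maximum principle is phrased oddly: what the maximum principle gives is that the multiplicity of the stationary integral varifold is locally constant on the smooth limiting surface, yielding $V=k[\Sigma]$ after taking connected components; the graphs do not literally coincide before the limit.
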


Given $R>0$. To obtain the local $L^2$-bound \eqref{eq:H L^2 bound} of the limit flow $M(t)$ in $C_R$, we need to know the behavior of the mean curvature of $M(t)$ on the boundary of $C_R$.

\begin{lem} \label{non positive mean curvature}

For any $x \geq \alpha + 2$, $t > 0$, the mean curvature vector $\vec H$ of the upper half of $S(G_{\tf(\cdot,t)})$ points downward at the point $\tilde{f} (x,t)$. More precisely, the projection of $\vec{H}$ to the $y-$axis is negative.

\end{lem}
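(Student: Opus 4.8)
The plan is to recast the statement as a sign condition for the horizontal graph equation and then prove that condition by comparison with catenoids via the parabolic Sturmian theorem. First I would reduce the claim to a statement about $\tf_t$. Away from the neck, $S(G_{\tf(\cdot,t)})$ is the rotationally symmetric graph $y=\tf(x,t)$, whose upward unit normal is $\nu=(1+\tf_x^2)^{-1/2}(-\tf_x\cos\theta,\,1,\,-\tf_x\sin\theta)$, so $\langle\nu,e_y\rangle=(1+\tf_x^2)^{-1/2}>0$. Since the graph moves vertically with speed $\tf_t$ (with $x$ held fixed), its normal speed equals $\tf_t(1+\tf_x^2)^{-1/2}=\langle\vec H,\nu\rangle$, and therefore
\[
\langle\vec H,e_y\rangle=\langle\vec H,\nu\rangle\,\langle\nu,e_y\rangle=\frac{\tf_t}{1+\tf_x^2}=\frac{1}{1+\tf_x^2}\left(\frac{\tf_{xx}}{1+\tf_x^2}+\frac{\tf_x}{x}\right).
\]
Writing $\phi=\arctan\tf_x\in(0,\tfrac{\pi}{2})$ (legitimate for $x>\eta(t)$ because $\tf(\cdot,t)$ is strictly increasing for $t>0$), a short computation gives $\pr_x\!\big(x\sin\phi\big)=\frac{x}{\sqrt{1+\tf_x^2}}\big(\frac{\tf_{xx}}{1+\tf_x^2}+\frac{\tf_x}{x}\big)$, so the assertion is equivalent to: for every $t>0$ the ``osculating neck radius'' $r(x,t):=\dfrac{x\,\tf_x(x,t)}{\sqrt{1+\tf_x(x,t)^2}}$ is strictly decreasing in $x$ on $[\alpha+2,\infty)$. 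This is precisely the quantity that is constant along a catenoid (where $\vec H=0$), equal to its neck radius, which is why catenoids are the natural barriers here.

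Next I would establish this monotonicity by contradiction, in the spirit of the proof of Proposition \ref{large x uniform gradient estiamte}. Suppose $\langle\vec H,e_y\rangle\ge0$ at some $(x_0,\tf(x_0,t_0))$ with $x_0\ge\alpha+2$, $t_0>0$, i.e.\ $\tf_{xx}(x_0,t_0)\ge-\tf_x(1+\tf_x^2)/x_0$ there. Let $\mathcal C^{*}$ be the catenoid through $(x_0,\tf(x_0,t_0))$ that is tangent there to the profile curve, i.e.\ with neck radius $a^{*}=r(x_0,t_0)$; by the gradient estimate of Proposition \ref{large x uniform gradient estiamte} (valid also for $\tf$) one has $\tf_x(x_0,t_0)\le\frac{\alpha+1}{\sqrt{x_0^{2}-(\alpha+1)^{2}}}$, hence $a^{*}\le\alpha+1$. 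Because $\mathcal C^{*}$ is minimal and matches the first derivative of $\tf(\cdot,t_0)$ at $x_0$, comparing second derivatives shows the profile curve lies locally above $\mathcal C^{*}$ and is tangent to it at $x_0$. I would then argue, following the scheme of Proposition \ref{large x uniform gradient estiamte}: the static curve $\mathcal C^{*}$ (with its reflection, restricted to $C_R$) meets the flow's profile curve in a uniformly bounded number of points — the count against the initial curve $\rho_{n_R,R}$ is controlled, and no new intersections appear because the height $f_R(R,t)$ is monotone decreasing (Lemma \ref{height decreasing}) and the flow stays asymptotically flat while $\mathcal C^{*}$ does not; on the other hand, the tangency from above at $(x_0,t_0)$ produces, by the tangency alternative of the parabolic Sturmian theorem (\cite{angenent88_zero-set}) together with the behaviour at the two ends (near $x=R$ the flow lies strictly below $\mathcal C^{*}$ for $t$ near $t_0$, by the monotonicity of $D=g_0-f_R$ from the proof of Proposition \ref{large x uniform gradient estiamte}, and the neck end $x=\eta(t)$ is not reached by $\mathcal C^{*}$), a further pair of intersection points for nearby times, contradicting the bound. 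Letting $R\to\infty$ (or running the same comparison directly for $\tf$ via the local Sturmian theorem) gives $\langle\vec H,e_y\rangle\le0$ on $\{x\ge\alpha+2,\ t>0\}$. Strictness then follows from the strong maximum principle: $w:=\tf_t$ solves a linear parabolic equation with no zeroth-order term, so vanishing of $w$ at an interior point would force $w\equiv0$ by parabolic unique continuation, making $\tf(\cdot,t)$ a static rotationally symmetric minimal surface — impossible, since the only such surfaces are planes and catenoids while ours is a bounded ($\tf\le\alpha$) graph touching the axis at a shrinking neck.

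The main obstacle is the intersection bookkeeping in the previous step. The neck radius $a^{*}=r(x_0,t_0)$ of the tangent catenoid is not bounded below a priori — it degenerates as $\tf_x(x_0,t_0)\to0$ — so $\mathcal C^{*}$ may have its neck nearer the axis than the flow's neck $x=\eta(t_0)$, and then the number of intersections of $\mathcal C^{*}$ (and its reflection) with the initial curve, and its persistence under the flow, has to be handled by a case distinction on the size of $a^{*}$; one may have to replace $\mathcal C^{*}$ by a nearby catenoid whose neck radius lies in a favourable range while keeping the profile curve above it near $x_0$. Verifying that the forced tangency genuinely overruns the uniform intersection bound in all cases is the technical core of the argument.
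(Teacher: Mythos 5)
Your proposal is essentially the paper's proof: both pick the catenoid tangent to the profile curve at the given point (whose neck radius is $\le\alpha+1$ by Proposition \ref{large x uniform gradient estiamte}) and rule out the profile lying locally above it via a Sturmian intersection count against the initial data. The paper runs the comparison directly on $\tf$ against the initial curve $\rho_\eta$ rather than passing through the truncations $f_R$, which also dispenses with your worry about a degenerate tangent-catenoid neck, since $\rho_\eta$ meets every such translated catenoid $U_{\nu,\xi}$ in at most two points regardless of $\nu$, and no new intersections can enter from infinity because $\tf$ stays bounded while the catenoid is unbounded.
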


\begin{proof}

Consider the family of the translations of the catenoids given by the graph of the function $U_{\nu, \xi} (x) = \xi + \nu \ln (\frac{x}{\nu} + \sqrt{\frac{x^2 - \nu^2}{\nu^2}})$, where $0 < \nu \leq \alpha + 1$.

Given $x_1 \geq \alpha + 2$, $t_1 > 0$. By Proposition \ref{large x uniform gradient estiamte}, we know $0 < \frac{\pr}{\pr x} \tilde{f} (x_1, t_1) \leq \frac{\alpha +1}{\sqrt{x^2 - (\alpha + 1)^2}}$. Thus there exist $\nu$ and $\xi$ such that
\begin{align*}
U_{\nu,\xi}(x_1) = \tf(x_1, t_1),\ U'_{\nu,\xi}(x_1) = \frac{\pr}{\pr x} \tf(x_1, t_1).
\end{align*}

If $\frac{\pr^2}{\pr x^2} \tf(x_1, t_1) > U''_{\nu, \xi} (x_1)$, then the graph of $\tf(\cdot, t_1)$ is on top of the graph of $U_{\nu, \xi}$ near a neighborhood of $x_1$. Since $U_{\nu, \xi}$ tends to $\infty$ as $x \to \infty$ and $\tf(\cdot, t_1)$ is bounded, these two graphs have another intersection point far away from the origin. The mean curvature at $\tf(x_1, t_1)$ along its graph is strictly positive by the local comparison with the catenoid, thus at time $t_1 - \e$, for some $\e > 0$ small enough, the graph of $\tf(\cdot, t_1 - \e)$ and the graph of $U_{\nu, \xi}$ have two intersection points in a neighborhood of $\tf(x_1, t_1 - \e)$, while there is another intersection point far away from these two intersection points by the boundedness of $\tf$ and unboundedness of the catenoid.

But at $t = 0$, the graph of $\tf(\cdot, 0)$ and the graph of $U_{\nu, \xi}$ have at most two intersection points, and for large $x$, the second graph is always strictly on top of the first graph by the boundedness. By Sturmian Theorem, there are at most two intersection points between the graph of $\tf(\cdot, t_1 - \e)$ and the graph of $U_{\nu, \xi}$. This yields a contradiction.

Hence $\frac{\pr^2}{\pr x^2} \tf(x_1, t_1) \leq U''_{\nu, \xi} (x_1)$, which implies that locally $\tf(\cdot,t_1)$ lies below $U_{\nu,\xi}$. Therefore, locally the upper half of $S(G_{\tf(\cdot,t_1)})$ touches the catenoid from below. This shows that the mean curvature vector of the upper half of $S(G_{\tf(\cdot,t_1)})$ points downward.
\end{proof}

\begin{prop} \label{limit neck point limit}
For any sequence $t_i\nearrow\infty$, there exists a subsequence $t_{i_j}$ that $M(t_{i_j})$ converges (in the sense of varifold convergence) to the union of planes with possibly higher multiplicity. Moreover, the neck point of $\tf$ tends to the origin as $t \to \infty$.
\end{prop}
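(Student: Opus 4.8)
The plan is to derive a uniform local space-time $L^2$-bound on $\vec H$ inside large cylinders, feed it into Lemma~\ref{lem:Ilmanen argument}, and identify the resulting static limit; the subsequential statement so obtained then forces the neck to collapse. I expect the $L^2$-bound to be the only real difficulty, since the flux of the flow through the lateral boundary of the cylinder is not a priori signed.

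\emph{Step 1 (local $L^2$-bound).} Fix $R_0>\alpha+2$. I claim $\int_1^\infty\int_{M(t)\cap C_{R_0}}|\vec H|^2\,d\mathcal H^2\,dt<\infty$. By the localized first variation formula,
\[
\frac{d}{dt}\,\mathcal H^2\big(M(t)\cap C_{R_0}\big)=-\int_{M(t)\cap C_{R_0}}|\vec H|^2\,d\mathcal H^2+\Phi(t),
\]
where $\Phi(t)$ is the flux of the flow across $\{x=R_0\}$. Three facts control the right-hand side. First, Proposition~\ref{large x uniform gradient estiamte} bounds $\frac{\pr}{\pr x}\tf(R_0,t)$ uniformly in $t$; together with $\tf\le\alpha$ (and the monotonicity in $x$ of the profile curve, which makes its length over $\{u(0,t)\le x\le R_0\}$ uniformly bounded) this yields $\mathcal H^2(M(t)\cap C_{R_0})\le A(R_0)$ for all $t>0$. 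Second, by Lemma~\ref{non positive mean curvature} the $y$-component of $\vec H$ is negative on $\{x\ge\alpha+2\}$, so $t\mapsto\tf(R_0,t)$ is monotone non-increasing, and since it stays in $[0,\alpha]$ we get $\int_1^\infty|\partial_t\tf(R_0,t)|\,dt<\infty$. Third, expressing $\Phi(t)$ through the profile curve at $x=R_0$ and using again the bound on $\tf_x(R_0,\cdot)$ gives $|\Phi(t)|\le C(R_0)\,|\partial_t\tf(R_0,t)|$, regardless of sign. Integrating the identity from $1$ to $T$ and using $\mathcal H^2(M(T)\cap C_{R_0})\ge 0$ gives $\int_1^T\int_{M(t)\cap C_{R_0}}|\vec H|^2\le A(R_0)+C(R_0)\alpha$, independent of $T$. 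This is the crux: it is precisely the catenoid barrier—entering through Lemma~\ref{non positive mean curvature} and Proposition~\ref{large x uniform gradient estiamte}—that makes the (unsigned) flux integrable in time.

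\emph{Step 2 (Ilmanen's compactness).} Given $t_i\nearrow\infty$, set $M^i(t):=M(t+t_i)$ for $t$ in a fixed compact interval, say $[0,1]$ with $\delta=1/2$ (valid once $t_i>1/2$, using that the flow is eternal). Each $M^i(t)=S(G_{\tf(\cdot,t+t_i)})$ is a topological annulus, hence of genus $0$, and its rotationally symmetric structure together with the uniform gradient estimates established above yields uniformly bounded area ratios, hence (by Huisken's monotonicity) uniformly bounded Gaussian density. By Step~1, for every open $U\subset\R^3$ we have $\int_0^1\int_{M^i(t)\cap U}|\vec H|^2\,d\mathcal H^2\,dt\to 0$ as $i\to\infty$, being the tail of a convergent integral. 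Lemma~\ref{lem:Ilmanen argument} then provides a subsequence $t_{i_j}$ along which $M(t_{i_j})$ converges, smoothly away from a finite set, to a smooth embedded complete minimal surface $\Sigma$ with some multiplicity $k\in\Z_+$.

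\emph{Step 3 (identification and neck collapse).} The limit $\Sigma$ is invariant under rotation about the $y$-axis, and $\Sigma\subset\{|y|\le\alpha\}$ since $0\le\tf\le\alpha$. A complete embedded minimal surface in $\R^3$ that is rotationally symmetric about the $y$-axis is a union of catenoids and horizontal planes $\{y=c\}$; the slab bound rules out the catenoids, so $\Sigma$ is a union of planes parallel to $\{y=0\}$, which proves the first assertion. Next, if $u(0,t_{i_j})$ did not tend to $0$, then along a further subsequence $u(0,t_{i_j})\to\ell>0$, so the neck circles $\{x_1^2+x_3^2=u(0,t_{i_j})^2,\ y=0\}\subset M(t_{i_j})$ would converge to $\{x_1^2+x_3^2=\ell^2,\ y=0\}$; at its neck point the profile curve has a vertical tangent, so near any such limit point the unit normals of $M(t_{i_j})$ are asymptotically radial, which is incompatible with smooth convergence to a horizontal plane. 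By rotational symmetry every point of that limit circle would then be a point of the finite singular set, a contradiction. Hence $u(0,t_{i_j})\to 0$. Finally, if $u(0,t)$ did not tend to $0$ as $t\to\infty$, there would be $\ell>0$ and $s_k\nearrow\infty$ with $u(0,s_k)\ge\ell$; applying Steps~1--3 to $\{s_k\}$ extracts a subsequence along which $u(0,s_k)\to 0$, a contradiction. Therefore $\lim_{t\to\infty}u(0,t)=0$, as claimed.
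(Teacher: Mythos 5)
Your proposal follows the same overall strategy as the paper's proof: derive a uniform spacetime $L^2$-bound for $\vec H$ on $C_{R_0}$ via the localized first variation formula, feed it into Ilmanen's compactness (Lemma \ref{lem:Ilmanen argument}), identify the static limit as a union of horizontal planes using rotational symmetry and the slab bound $0\le\tf\le\alpha$, and then deduce neck collapse.

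A few points of comparison. In Step~1, the paper uses Lemma~\ref{non positive mean curvature} together with $\tf_x(R,t)>0$ to observe that the lateral boundary flux term $\int_{\partial\overline M(t)}\langle\eta,\cdot\rangle$ is already non-positive and simply drops it, whereas you bound $|\Phi(t)|\le C(R_0)|\partial_t\tf(R_0,t)|$ and integrate by using the monotone decay of $\tf(R_0,\cdot)$; both routes rest on the same lemma and are correct, but the sign observation is cleaner (your opening remark that the flux is ``not a priori signed'' underestimates what Lemma~\ref{non positive mean curvature} gives you). Steps~2 is identical in substance, and you add useful detail the paper omits (bounded genus and Gaussian density). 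For the neck collapse in Step~3, you argue via a finite singular set for the convergence, which in spirit is right but goes beyond the literal content of Lemma~\ref{lem:Ilmanen argument} as stated (the lemma only asserts smoothness of the limit, not the structure of the convergence); indeed the paper's remark after the proposition explicitly flags the ``pimple'' issue, i.e.\ the possibility of non-Lipschitz convergence. A safer way to obtain the same contradiction is via measure: if $u(0,t_{i_j})\to\ell>0$, then each $M(t_{i_j})$ has zero $\mathcal H^2$-measure in the open solid cylinder of radius $\ell/2$, so the varifold limit carries no mass there; the limiting planes would then have to be $\{y=\pm c\}$ with $c\ge\ell/2>0$, after which one can rule this out using the fact that the part of $M(t_{i_j})$ near its neck lies arbitrarily close to $\{y=0\}$, producing mass in the varifold limit in $\{|y|<\ell/4\}$ away from the inner cylinder. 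This is a minor repair; the structure of your argument matches the paper and the conclusion is correct.
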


\begin{proof}
Without loss of generality, we remove some of $t_i$'s to assume $t_{i+1}-t_i>1$. Let $M^i(t)=(M(t+t_i))_{t\in[0,1]}$. Consider the restriction of our family of surfaces $M(t)$ on $C_R$, denoted by $\overline{M}(t)$. We use $\overline{M}^i(t)$ to denote the flow $(\overline{M}(t+t_i))_{t\in[0,1]}$. By the first variation formula, we have
\begin{align*}
\int_0^{T} \int_{\overline{M}(t)} |\vec H|^2 d\mathcal H^2 dt = \text{Area}(\overline{M}(0)) - \text{Area}(\overline{M}(T)) + \int_0^{\infty} \int_{\partial \overline{M}(t)} \langle \eta, \vec H \rangle ds dt,
\end{align*}
where $\eta$ is the outward unit co-normal of $\partial \overline{M}(t)$. By Lemma \ref{non positive mean curvature}, and $\frac{\pr}{\pr x} \tf(R,t) > 0$, we know $\langle \eta, \vec H \rangle \leq 0$ along $\partial \overline{M}^t$. Since $\overline{M}(t)$  is a rotationally symmetric surface generated by graphs of bounded increasing function on a bounded interval, thus the area of $\overline{M}(t)$ is bounded and independent of $t$. Hence $\int_0^{\infty} \int_{\overline{M}(t)} |\vec H|^2 d\mathcal H^2 dt$ is bounded, and moreover, $\int_0^1 \int_{\overline{M}^i(t)} |\vec H|^2 d\mathcal H^2 dt\to 0$ as $i\to\infty$. By the compactness of weak mean curvature flows (see Section 7 of \cite{Ilmanen94_elliptic}) and Lemma \ref{lem:Ilmanen argument}, we know that $M^i$ subsequentially converges to a static mean curvature flow defined for time $[0,1]$, which is a smooth embedded minimal surface $\Sigma$ with possibly higher multiplicity. Such $\Sigma$ is also rotationally symmetric and lies between two parallel planes as $M(t)$ does. The only smooth embedded minimal surface that satisfies all these properties is the plane or the union of parallel planes.

As a consequence, the neck point of $\tilde f$ tends to the origin as $t\to\infty$.
\end{proof}

\begin{remark}
    Ilmanen's argument only assures the regularity of the limit minimal surface. It does not ensure the convergence is also regular, for example, it does not ensure the convergence is in the Lipschitz topology.

    One reason is that there could be ``pimples'' in the sequence of flows that vanish in the geometric measure theory limit. Such pimples are described by Simon in \cite[Lemma 2.1]{Simon93_willmore}, which is a key lemma that is used by Ilmanen in \cite{Ilmanen95_Sing2D}. On the other hand, if we can show that any subsequential limit in Proposition \ref{limit neck point limit} is a fixed plane, we can use Brakke's regularity of mean curvature flow to show that the convergence is multiplicity $2$ in our sense. Nevertheless, we decided to show a quantitative gradient estimate directly, because such an estimate gives us a better understanding of this limit eternal flow.
\end{remark}

From the gradient estimate \eqref{horizontal uniform gradient bound}, we know that
\begin{equation} \label{limit flow gradient estimate}
\frac{\pr}{\pr x} \tf(x,t) \leq \frac{\alpha + 1}{\sqrt{x^2 - (\alpha + 1)^2}}, 
\end{equation}
for all $x \geq \alpha + 2$, $t \geq 0$. Next, we adapt the same method that we used in Proposition \ref{gradient estimate} to get an improved gradient estimate over any closed interval in $(a, \infty)$.

\begin{prop} \label{limit gradient estimate}
For any $0 < a < b < c$, choose $k > c + \alpha + 2$ large enough such that
\begin{equation} \label{k condition}
\frac{(\alpha + 1) (\ln k - \ln a)}{\sqrt{k^2 - (\alpha + 1)^2}} < \frac{\pi}{4} \ln \frac{b}{a}.
\end{equation}
Then there exist a constant $\omega$ depending on $a, k$ and a constant $T$ depending on $a, b, k$, such that
\begin{align*}
\frac{\partial}{\partial x} \tf (x,t) \leq \tan \left(\frac{\pi}{2} \frac{\ln k - \ln b}{\ln k -\ln a} + \frac{\omega - b}{\ln t} + \arctan\left(\frac{\alpha + 1}{\sqrt{k^2 - (\alpha + 1)^2}}\right)\right),
\end{align*}
for all $x \in [b,k]$, $t \geq T$. In addition, $\lim\limits_{t \to \infty} [\tf (c,t) - \tf (b,t)] = 0$.
\end{prop}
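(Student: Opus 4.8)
\emph{Proof proposal.} The plan is to rerun, almost verbatim, the barrier argument from the proof of Proposition~\ref{gradient estimate}; the only change is that the role played there by the free boundary condition $\tf_x(R,\cdot)=0$ is now taken over by the interior gradient bound \eqref{limit flow gradient estimate} evaluated at the \emph{artificial} right endpoint $x=k$, and this is exactly what forces the extra summand $C_k:=\arctan\!\bigl(\tfrac{\alpha+1}{\sqrt{k^2-(\alpha+1)^2}}\bigr)$ into the estimate. First I would invoke Proposition~\ref{limit neck point limit}: since the neck point of $\tf(\cdot,t)$ tends to the origin, there is $T_0>0$ with $\tf(\cdot,t)$ a smooth strictly increasing function on $[a,k]$ for all $t\ge T_0$, so $\phi(x,t):=\arctan(\tf_x(x,t))\in(0,\tfrac\pi2)$ is smooth there; differentiating the horizontal graph equation exactly as in Proposition~\ref{gradient estimate} gives $\phi_t-\tfrac{\phi_x}{x}-\tfrac{\phi_{xx}}{1+\tf_x^2}<0$ on $[a,k]\times[T_0,\infty)$.

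Next I would introduce, with $\mu=\tfrac{\pi}{2\ln(k/a)}$ and $\omega$ to be fixed, the barrier
\begin{equation*}
\varphi(x,t)=\mu\ln\frac{k}{x}+\frac{\omega-x}{\ln t}+C_k .
\end{equation*}
Because the added constant $C_k$ affects no derivative of $\varphi$, the supersolution computation of Proposition~\ref{gradient estimate} carries over verbatim: choosing $\omega\ge k$ and then $T'\ge T_0$ large (depending only on $a,k$) makes $\varphi_t-\tfrac{\varphi_x}{x}-\tfrac{\varphi_{xx}}{1+\tf_x^2}\ge0$ on $[a,k]\times[T',\infty)$ and $\varphi(\cdot,T')\ge\tfrac\pi2\ge\phi(\cdot,T')$. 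The lateral checks are $\varphi(a,t)=\tfrac\pi2+\tfrac{\omega-a}{\ln t}+C_k>\phi(a,t)$ and, using \eqref{limit flow gradient estimate} at $x=k$, $\varphi(k,t)=\tfrac{\omega-k}{\ln t}+C_k\ge C_k\ge\phi(k,t)$. The parabolic maximum principle then yields $\phi\le\varphi$ on $[a,k]\times[T',\infty)$. Since $\varphi$ is decreasing in $x$, on $[b,k]$ this gives $\phi(x,t)\le\varphi(b,t)=\tfrac\pi2\,\tfrac{\ln k-\ln b}{\ln k-\ln a}+\tfrac{\omega-b}{\ln t}+C_k$; hypothesis \eqref{k condition} enters exactly here, because $C_k\le\tfrac{\alpha+1}{\sqrt{k^2-(\alpha+1)^2}}<\tfrac\pi4\,\tfrac{\ln(b/a)}{\ln(k/a)}$ forces the $t\to\infty$ value $\tfrac\pi2\,\tfrac{\ln(k/b)}{\ln(k/a)}+C_k$ of $\varphi(b,\cdot)$ to be strictly below $\tfrac\pi2$. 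Hence there is $T\ge T'$ with $\varphi(b,t)<\tfrac\pi2$ for $t\ge T$, and on $[b,k]\times[T,\infty)$ one may apply $\tan$ to get $\tf_x(x,t)=\tan\phi(x,t)\le\tan\varphi(b,t)$, which is the asserted inequality.

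For the last assertion I would integrate over $[b,c]\subset[b,k]$: $0\le\tf(c,t)-\tf(b,t)\le(c-b)\tan\varphi(b,t)$, so $\limsup_{t\to\infty}[\tf(c,t)-\tf(b,t)]\le(c-b)\tan\!\bigl(\tfrac\pi2\,\tfrac{\ln(k/b)}{\ln(k/a)}+C_k\bigr)$. Fixing $b,c$ and any $k>c+\alpha+2$ with $\tfrac{\alpha+1}{\sqrt{k^2-(\alpha+1)^2}}<\tfrac\pi4$ — so that \eqref{k condition} holds for all sufficiently small $a$ — letting $a\to0$ gives $\limsup_{t\to\infty}[\tf(c,t)-\tf(b,t)]\le(c-b)\tan C_k$, and then letting $k\to\infty$, so that $C_k\to0$, yields $\lim_{t\to\infty}[\tf(c,t)-\tf(b,t)]=0$.

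The computations (the supersolution property of $\varphi$ and the boundary comparisons) are routine transcriptions of Proposition~\ref{gradient estimate} and I would not dwell on them. The one point needing genuine care — and what I expect to be the main obstacle — is the bookkeeping of the two auxiliary parameters $a$ and $k$: one must verify that the factor $\tfrac\pi4$ (rather than $\tfrac\pi2$) in \eqref{k condition} leaves enough room both to keep $\varphi(b,\cdot)$ below $\tfrac\pi2$ on $[b,k]$ and, in the final step, to keep \eqref{k condition} valid while first sending $a\to0$ and only afterwards $k\to\infty$.
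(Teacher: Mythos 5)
Your proposal is correct and follows essentially the same route as the paper's proof: invoke Proposition~\ref{limit neck point limit} to ensure smoothness on $[a,k]$, reuse the barrier $\varphi(x,t)=\mu\ln(k/x)+\tfrac{\omega-x}{\ln t}+\arctan\bigl(\tfrac{\alpha+1}{\sqrt{k^2-(\alpha+1)^2}}\bigr)$ (where the new constant summand replaces the free boundary condition of Proposition~\ref{gradient estimate} with the uniform gradient bound \eqref{limit flow gradient estimate} at the artificial endpoint $x=k$), apply the maximum principle, check that \eqref{k condition} keeps $\varphi(b,\cdot)$ below $\pi/2$ for large $t$, then integrate and send $t\to\infty$, $a\to0$, $k\to\infty$. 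You also correctly flag the one subtlety — the order of the limits and the need for $\tfrac{\alpha+1}{\sqrt{k^2-(\alpha+1)^2}}<\tfrac{\pi}{4}$ so that \eqref{k condition} survives $a\to0$ — which matches the paper's reasoning.
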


\begin{proof}

By Proposition \ref{limit neck point limit}, we know that there exists $T > 0$ such that for all $t \geq T$, the distance from the neck point of $\tf(x, t)$ to the origin is less than $\frac{a}{2}$. Hence $\tf(x,t)$ is a smooth function over $[a,k]$ for all $t \geq T$.

We know
\begin{align*}
\tf_t = \frac{\tf_{xx}}{1+\tf_x^2} + \frac{\tf_x}{x}, \qquad \tf_x > 0 \text{ for all } x \in [a,k].
\end{align*}

Let $\phi(x,t) = \arctan (\tf_x(x,t))$ for $x \in [a,k]$, $t \in [T, \infty)$. Then $0 \leq \phi(x,t) < \frac{\pi}{2}$, and $\tf_x(x,t) = \tan (\phi(x,t))$. As shown in the proof of Proposition \ref{gradient estimate}, 

\begin{equation}
\phi_t - \frac{\phi_x}{x} - \frac{1}{1+ \tf_x^2} \phi_{xx} = - \frac{\tf_x}{x^2(1+ \tf_x^2)} < 0.
\end{equation}

Let $\mu = \frac{\pi}{2 \ln \frac{k}{a}}$, $T' = T + k(k + \frac{\pi}{2})$, $\omega = k + \frac{\pi}{2} \ln T'$. Let
\begin{align*}
\varphi(x,t) = \mu \ln \frac{k}{x} + \frac{\omega - x}{\ln t} + \arctan \left(\frac{\alpha + 1}{\sqrt{k^2 - (\alpha + 1)^2}}\right).
\end{align*}

By the same argument as in the proof of Proposition \ref{gradient estimate}, we have $\varphi_t - \frac{\varphi_x}{x} - \frac{\varphi_{xx}}{1+\tf_x^2} > 0$.

Thus for $x \in [a,k]$, $t \geq T'$, we know that
\begin{align*}
\begin{aligned}
& \varphi(x,T') \geq \frac{\omega - x}{\ln T'} \geq \frac{\pi}{2} \geq \phi(x,T'), \\
&\varphi_t - \frac{\varphi_x}{x} - \frac{\varphi_{xx}}{1+\tf_x^2} > 0 > \phi_t - \frac{\phi_x}{x} - \frac{\phi_{xx}}{1+\tf_x^2} , \\
&\varphi(a,t) \geq \mu \ln \frac{k}{a} = \frac{\pi}{2} \geq \phi(a,t), \\
&\varphi(k,t) > \arctan\left(\frac{\alpha + 1}{\sqrt{k^2 - (\alpha + 1)^2}}\right) \geq \phi(k,t), \qquad (\text{by } \eqref{limit flow gradient estimate}).
\end{aligned}
\end{align*}

We can apply the classical maximum principle to conclude that $\varphi (x,t) \geq \phi(x,t)$ for all $x \in [a,R]$, $t \geq T'$.

For any $x \in [b, k]$, $\phi(x,t) \leq \varphi(x,t) \leq \varphi(b,t) = \frac{\pi}{2} \frac{\ln k - \ln b}{\ln k -\ln a} + \frac{\omega - b}{\ln t} + \arctan(\frac{\alpha + 1}{\sqrt{k^2 - (\alpha + 1)^2}})$. 

The condition \eqref{k condition} guarantees that $\arctan(\frac{\alpha + 1}{\sqrt{k^2 - (\alpha + 1)^2}}) < \frac{\pi}{4} (1- \frac{\ln k - \ln b}{\ln k -\ln a})$. In addition, there exists $T'' > T'$ such that for all $t \geq T''$, we have $\frac{\omega - b}{\ln t} < \frac{\pi}{4} (1- \frac{\ln k - \ln b}{\ln k -\ln a})$. Therefore for all $x \in [b, k]$, $\varphi(x,t) < \frac{\pi}{2}$.

For all $x \in [b,k]$, $t \geq T''$, we have
\begin{equation*}
\begin{aligned}
\tf_x (x,t) &= \tan (\phi(x,t)) \leq \tan \left(\frac{\pi}{2} \frac{\ln k - \ln b}{\ln k -\ln a} + \frac{\omega - b}{\ln t} + \arctan\left(\frac{\alpha + 1}{\sqrt{k^2 - (\alpha + 1)^2}}\right)\right), \\
\tf (c,t) - \tf (b,t) &= \int_b^c \tf_x(x,t) dx \\
&\leq (c - b) \tan \left(\frac{\pi}{2} \frac{\ln k - \ln b}{\ln k -\ln a} + \frac{\omega - b}{\ln t} + \arctan\left(\frac{\alpha + 1}{\sqrt{k^2 - (\alpha + 1)^2}}\right)\right).
\end{aligned}
\end{equation*}

Let $t \to \infty$, we know
\begin{align*}
0 \leq \limsup\limits_{t \to \infty} [\tf (c,t) - \tf (b,t)] \leq (c - b) \tan \left(\frac{\pi}{2} \frac{\ln k - \ln b}{\ln k -\ln a} + \arctan\left(\frac{\alpha + 1}{\sqrt{k^2 - (\alpha + 1)^2}}\right)\right).
\end{align*}

Since the condition \eqref{k condition} works for all $k$ large enough, $a$ small enough, thus we can let $a \to 0$, and $k \to \infty$. Let $a \to 0$, we have $0 \leq \limsup\limits_{t \to \infty} [\tf (c,t) - \tf (b,t)] \leq \frac{(c - b)(\alpha + 1)}{\sqrt{k^2 - (\alpha + 1)^2}}$. Then let $k \to \infty$, we conclude 
$
\lim\limits_{t \to \infty} [\tf(c,t) - \tf(b,t)] = 0$.
\end{proof}

We are now prepared to prove our main theorem.

\begin{thm} \label{main}
There exists a rotationally symmetric surface in $\R^3$, the mean curvature flow starting from this surface exists for all future time, and the forward limit is the multiplicity $2$ plane $\{y = 0\}$.    
\end{thm}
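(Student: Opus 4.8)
The plan is to take the eternal flow $M(t)=S(G_{\tf(\cdot,t)})$ produced in the previous proposition as the desired example and to check that its forward limit is the multiplicity-$2$ plane $\{y=0\}$ in the sense of Section~\ref{SS:multiplicity}. All of the analytic input is already in place: $M(t)$ exists for every $t\geq 0$; the profile function $\tf(\cdot,t)$ is strictly increasing in $x$ with $0\leq \tf\leq\alpha$ and, inheriting Lemma~\ref{height upper bound} in the limit $R\to\infty$, satisfies $\tf(x,t)\leq\alpha x$ for $x$ to the right of the neck point; the neck point tends to the origin (Proposition~\ref{limit neck point limit}); and we have the quantitative gradient decay of Proposition~\ref{limit gradient estimate}. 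So what remains is to convert these statements about the profile curve into a statement about two graphs over $\{y=0\}$ within a compact set and outside a small ball.

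First I would record, for any fixed $0<b<c$, the two limits I need: $\tf(x,t)\to 0$ and $\tf_x(x,t)\to 0$, both uniformly on $[b,c]$ as $t\to\infty$. For the height: Proposition~\ref{limit gradient estimate} gives $\lim_{t\to\infty}[\tf(c,t)-\tf(b,t)]=0$, and once $t$ is large enough that the neck point lies to the left of $b$ we have $\tf(b,t)\leq\alpha b$, so $\limsup_{t\to\infty}\tf(c,t)\leq\alpha b$; letting $b\downarrow 0$ forces $\tf(c,t)\to 0$, and monotonicity of $\tf(\cdot,t)$ in $x$ upgrades this to uniform convergence on $[b,c]$. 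For the gradient: Proposition~\ref{limit gradient estimate} bounds $\tf_x$ on $[b,k]$ by $\tan\!\bigl(\tfrac{\pi}{2}\tfrac{\ln k-\ln b}{\ln k-\ln a}+\tfrac{\omega-b}{\ln t}+\arctan\tfrac{\alpha+1}{\sqrt{k^{2}-(\alpha+1)^{2}}}\bigr)$; given $\e'>0$ one can, by first choosing $k$ large (making the $\arctan$ term small), then $a$ small (so that \eqref{k condition} holds and the first term is small), and finally $t$ large (making the $1/\ln t$ term small), render the entire argument of $\tan$ smaller than $\e'$. Since $\e'$ is arbitrary and $c<k$, this gives $\tf_x(\cdot,t)\to 0$ uniformly on $[b,c]$.

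Then I would assemble the multiplicity-$2$ convergence directly from the definition. Fix a compact $K\subset\R^{3}$, say $K\subset B_{\rho}(0)$, and $\e>0$. Using the neck-point convergence and the height limit, pick $T_{0}$ so that for $t\geq T_{0}$ the neck point lies in $B_{\e/2}(0)$ and $\tf(x,t)<\e/2$ for all $x\in[0,\rho]$. In the profile picture a point $(x,\pm\tf(x,t))$ of the curve lies in $K$ only if $x\leq\rho$, and lies outside $B_{\e}(0)$ only if $x^{2}+\tf(x,t)^{2}\geq\e^{2}$, which for $t\geq T_{0}$ forces $x\geq\tfrac{\sqrt{3}}{2}\e$. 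On the interval $[\tfrac{\sqrt{3}}{2}\e,\rho]$ the neck lies strictly to the left, so $\tf(\cdot,t)$ is smooth there, and $[M(t)\cap K]\setminus B_{\e}(0)$ is exactly the union of the two rotationally symmetric graphs $y=\tf(x,t)$ and $y=-\tf(x,t)$ over $\{y=0\}$ on an annular region contained in $\{\tfrac{\sqrt{3}}{2}\e\leq\sqrt{X^{2}+Z^{2}}\leq\rho\}$. By the two uniform limits above, applied with $b=\tfrac{\sqrt{3}}{2}\e$ and $c=\rho$, the $C^{1}$ norms of these two graphs tend to $0$ as $t\to\infty$, which is precisely the convergence of $M(t)$ to the plane $\{y=0\}$ with multiplicity $2$. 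Embeddedness and connectedness of $M(t)$ are immediate, since the profile curve is a strictly monotone graph joined to its reflection at the single neck point on the $x$-axis.

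I do not expect a genuine obstacle in this final step: all of the difficulty has been front-loaded into Propositions~\ref{limit neck point limit} and~\ref{limit gradient estimate}. The only point requiring a little care is ensuring that, after excising $B_{\e}(0)$, we are left precisely in the region where $\tf(\cdot,t)$ is a smooth graph with small $C^{1}$ norm — that is, that the shrinking neck never intrudes into $K\setminus B_{\e}(0)$ — and this is exactly what the convergence of the neck point to the origin guarantees.
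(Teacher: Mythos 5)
Your proof is correct and takes essentially the same route as the paper: it uses Proposition~\ref{limit neck point limit} for the neck-point convergence, Lemma~\ref{height upper bound} for the bound $\tf(x,t)\leq\alpha x$, and Proposition~\ref{limit gradient estimate} to force both $\tf(c,t)\to 0$ (by comparing to $\tf(b,t)\leq\alpha b$ and sending $b\to 0$) and $\tf_x\to 0$ uniformly on compacta. The one place you go beyond the paper's text is the explicit unpacking of the multiplicity-$2$ definition from Section~\ref{SS:multiplicity}---choosing $T_0$ so the neck lies inside $B_{\epsilon/2}$, identifying the annular region $\{\tfrac{\sqrt{3}}{2}\epsilon\leq |X|\leq\rho\}$ over which $M(t)\cap K\setminus B_\epsilon$ is a double graph, and then invoking the uniform $C^1$ decay there; the paper states the conclusion without spelling this out, and your extra detail is a harmless and slightly more careful presentation of the same argument.
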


\begin{proof}

We have already shown the mean curvature flow starting from $\rho_{\eta}$, given by $S(G_{\tf(x,t)})$,  exists for all future time. Now we show that it converges to the multiplicity $2$ plane as $t \to \infty$.

For any subsequential limit of $\tf(x,t)$, by Proposition \ref{limit neck point limit}, we know it is the graph of a function defined on $[0, \infty)$. Lemma \ref{height upper bound} implies that $\tf (x,t) \leq \alpha x$, for all $t \in [0, \infty)$. By proposition \ref{limit gradient estimate}, we know that for any $0 < b < c$, we have
\begin{align*}
0 \leq \limsup\limits_{t \to \infty} \tf(c,t) = \limsup\limits_{t \to \infty}  \tf(b,t) \leq \alpha b   
\end{align*}

Since $b$ can be arbitrarily small, hence for all $x > 0$, any subsequential limit of $\tf(x,t)$ as $t \to \infty$ is $0$. Moreover, the proof of Proposition \ref{limit flow gradient estimate} also implies the gradient $\frac{\partial}{\partial x}\tf(x,t)$ uniformly converges to $0$ at $t\to\infty$ for $x$ in a fixed compact sub-interval of $(0,\infty)$. Therefore, the forward limit of the immortal flow is the multiplicity $2$ plane $\{y = 0\}$.
\end{proof}

\begin{rmk}\label{rmk:different family}
If we replace the family of curves ${\rho_{\delta}}$ with a family of quarter circles, namely $\{(R - x, \sqrt{r^2 - x^2}) |\ x \in [0, r]\}$ for $0 \leq r \leq R$, then the interpolation argument as described in Theorem \ref{free boundary immortal flow} (with a possibly different bound on $R$) remains valid. Therefore, for large $R$, we can obtain a family of rotationally symmetric free boundary surfaces in $C_R$, evolving under the mean curvature flow, and converging to a multiplicity $2$ disk.

However, if we consider the surface in this family with the neck point at $(1,0)$ for large $R$, and take the limit as $R$ approaches infinity, these surfaces will converge to the catenoid that is formed by rotating the graph of $x = \cosh y$. This addresses the importance of making a careful choice of the (singular) foliation.
\end{rmk}

\appendix
\section{Asymptotically planar mean curvature flow}

\begin{definition}
    A surface $M\subset\R^3$ is asymptotic to a plane $P$ if for any $\epsilon>0$ there exists a compact set $K_\epsilon$ such that $\overline{M\backslash K_\epsilon}$ is a complete noncompact surface with boundary, $\partial M\subset K_\epsilon$, and $\overline{M\backslash K_\epsilon}\subset P_\epsilon$, where $P_\epsilon$ is the $\epsilon$ tubular neighbourhood of $P$.
\end{definition}

\begin{prop} \label{asymptotic to the same plane}
    Suppose $\{M(t)\}_{t\in[0,1]}$ is a mean curvature flow, $R>0$, and $N(t)$ is a connected component of $M(t)\backslash \overline B_R$. If $N(0)$ is asymptotic to a plane $P$, then $N(t)$ is asymptotic to the same plane $P$ for $t\in[0,1]$.
\end{prop}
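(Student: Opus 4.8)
The plan is to control the spatial extent of the flow at distance comparable to $R$ using the fact that mean curvature flow has finite speed of propagation for graphs with bounded gradient, while simultaneously using a parabolic maximum principle (a slab barrier) to keep the far-away portion of $N(t)$ inside a thin tubular neighbourhood of $P$. After a rotation we may assume $P=\{y=0\}$. Fix $\epsilon>0$ and let $K_{\epsilon/2}$ be the compact set furnished by the asymptotic hypothesis for $N(0)$, so that $\overline{N(0)\backslash K_{\epsilon/2}}$ lies in the slab $\{|y|\le \epsilon/2\}$ and is a graph over (a region of) $P$ with small gradient outside a large ball $B_{r_0}$. Enlarging $r_0$ if necessary we may assume $r_0\ge R$ and that on $\{|x'|\ge r_0\}$ (writing $x'$ for the coordinates in $P$) the surface $N(0)$ is a graph of a function $w_0$ with $|w_0|\le\epsilon/2$ and $|\nabla w_0|\le 1$, say.

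First I would set up the slab barrier. The two planes $\{y=\pm\epsilon\}$ are static mean curvature flows. They do not intersect $N(0)$ outside $K_{\epsilon/2}$, and I claim they do not touch $N(t)$ in the far region for $t\in[0,1]$ either: any first touching would have to occur either on the compact part (ruled out by choosing the barrier slab a bit wider than the sup of $|y|$ on $N([0,1])\cap \overline B_\rho$ for a suitable $\rho$, using continuity of the flow on compact sets and shrinking $\epsilon$-independent constants), or ``at infinity'', which is excluded by the interior gradient and curvature estimates of Ecker--Huisken applied to the graphical far region together with a standard argument that a graph of a bounded function cannot come up to touch a horizontal plane from one side in finite time if it starts strictly below it. Concretely, on $\{|x'|\ge r_0\}$ the far piece of $N(t)$ remains graphical over $P$ for a short time by Ecker--Huisken's estimates, and on graphs the maximum principle for the graph function $w(x',t)$ (which solves the quasilinear graphical mean curvature flow equation) gives $\|w(\cdot,t)\|_\infty\le\|w_0\|_\infty\le\epsilon/2$ as long as it stays graphical; a continuity/openness argument then shows it stays graphical and in the slab for all $t\in[0,1]$.

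Second, I would promote ``stays in the slab'' to ``stays graphical with small gradient, hence asymptotic to $P$''. Once we know the far region of $N(t)$ is contained in $\{|y|\le\epsilon\}$, Ecker--Huisken's interior estimates (applied in parabolic balls of radius comparable to $|x'|$, which is legitimate since the flow exists on $[0,1]$ and is trapped in a slab whose thickness is small compared to $|x'|$) give uniform gradient and higher-order bounds on $N(t)\cap\{|x'|\ge r_1\}$ for $r_1$ slightly larger than $r_0$. This shows $\overline{N(t)\backslash K}$ is a complete noncompact graph inside $P_\epsilon$ for a suitable compact $K\supset K_{\epsilon/2}$ (absorbing the finite-speed-of-propagation spreading of the compact core over the unit time interval), which is exactly the statement that $N(t)$ is asymptotic to $P$. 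Since $\epsilon>0$ was arbitrary we are done.

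The main obstacle I anticipate is the ``at infinity'' part of the maximum principle: a priori the far region could fail to remain graphical, or could develop a singularity, so one cannot naively invoke the comparison principle against the planes $\{y=\pm\epsilon\}$ on a noncompact region. The clean way around this is the combination (i) Ecker--Huisken interior estimates, which are scale-invariant and only require the flow to be trapped in a slab thin relative to the distance from the origin, to guarantee a uniform time of graphical existence and uniform gradient control in the far region, and (ii) a bootstrapped continuity argument: the set of times $t\in[0,1]$ for which the far region is a graph in the open slab $\{|y|<\epsilon\}$ is nonempty, open (by the estimates), and closed (by the maximum principle applied on the graphical region, which is now legitimate since the graph function is defined on all of $\{|x'|\ge r_0\}$ and bounded), hence all of $[0,1]$. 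I would also need to be mildly careful that $N(t)$ remains a single connected component of $M(t)\backslash\overline B_R$ — this follows because the flow moves continuously and the slab containing the far region is disjoint from $\partial B_R$ once $\epsilon<R$, so no reconnection with other components can occur in the far region during $[0,1]$.
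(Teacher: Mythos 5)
Your approach, comparing against the noncompact slab $\{y=\pm\epsilon\}$, is genuinely different from the paper's, and it has a gap that your sketch does not close. The comparison principle against a noncompact barrier is not an immediate consequence of the parabolic maximum principle; one needs compactness to produce a first interior touching point, or a Phragm\'en--Lindel\"of argument at infinity, or control of the boundary of the graphical region. Your patch is circular on all counts: you invoke Ecker--Huisken interior estimates to rule out escape at infinity, but those estimates require the far region to already be trapped in a thin slab, which is what you are trying to prove; and the graph function $w$ on $\{|x'|\ge r_0\}$ solves a PDE on a domain with a boundary at $\{|x'|=r_0\}$ adjacent to the moving compact core, whose data you do not control, so $\|w(\cdot,t)\|_\infty\le\|w_0\|_\infty$ does not follow from the interior equation alone. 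Your fix for the ``touching on the compact part'' case --- widening the slab to enclose $N([0,1])\cap\overline B_\rho$ --- produces a slab of $\epsilon$-independent width, which is not the $\epsilon$-slab you need, so the first-touching dichotomy for that wider slab says nothing about $P_\epsilon$. Finally, ``finite speed of propagation'' is not literally true for mean curvature flow; what is available is pseudo-locality, and that requires a different mechanism than the one you sketch.

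The paper realizes pseudo-locality directly with \emph{compact} barriers. For each far base point $(x,y)$ with $|(x,y)|\ge R_\epsilon$, it places a round sphere $\partial B_r(x,y,\epsilon/2+r)$ with $r$ chosen so that $r-\sqrt{r^2-4}\le\epsilon/2$; this sphere is initially disjoint from $N(0)$ because it lies entirely at height $\ge\epsilon/2$ and entirely outside $B_{R'}$. Being compact, the sphere can be compared with the flow by the standard avoidance principle with no complications at infinity or at the boundary of the compact core. As the sphere shrinks to $\partial B_{\sqrt{r^2-4t}}$ over $t\in[0,1]$, its lowest point never drops below height $\epsilon$, and sweeping $(x,y)$ over $\{|(x,y)|\ge R_\epsilon\}$ bounds the height of $N(t)$ in the far region by $\epsilon$. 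No graphicality bootstrap, no Ecker--Huisken estimates, and no continuity argument are needed. The missing idea in your sketch is precisely this replacement of a single noncompact barrier by a family of compact barriers, one over each far point, which is what makes the argument short and unconditional.
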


\begin{proof}
    Without loss of generality, we assume $P=\{(x,y,0)|(x,y)\in\R^2\}$. For any $\epsilon>0$, it suffices to find $R_\epsilon>0$, such that $\overline{N(t)\backslash B(R)}\subset P_{\epsilon}$. Because $N(0)$ is asymptotic to $P$, we may assume $N(0)\backslash B_{R'}\subset P_{\epsilon/4}$. Now we choose $r$ large such that $r-\sqrt{r^2-4}\leq \epsilon/2$ (e.g. $r^2>8/\epsilon+1$), and we choose $R_\epsilon>r+R'$. Then for any $(x,y)$ with $|(x,y)|\geq R_\epsilon$, $\partial B_r(x,y,\epsilon/2+r)$ has distance at least $\epsilon/2$ away from $P_{\epsilon/4}$, therefore it is disjoint from $N(0)\backslash B_{R'}$. Applying the avoidance principle shows that $N(t)\backslash B_{R_{\epsilon}}$ is disjoint from $\partial B_{\sqrt{r^2-4t}}(x,y,\epsilon/2+r)$. In particular, this implies that $N(t)\backslash B_{R_{\epsilon}}\subset P_{\epsilon}$ for $t\in[0,1]$.
\end{proof}

\bibliography{main}
\bibliographystyle{alpha}

\end{document}